\DeclarePairedDelimiter\abs{\lvert}{\rvert}%
\DeclarePairedDelimiter\norm{\lVert}{\rVert}%
\let\oldabs\abs
\def\abs{\@ifstar{\oldabs}{\oldabs*}}
\let\oldnorm\norm
\def\norm{\@ifstar{\oldnorm}{\oldnorm*}}
\DeclareRobustCommand\widecheck[1]{{\mathpalette\@widecheck{#1}}}
\def\@widecheck#1#2{%
    \setbox\z@\hbox{\m@th$#1#2$}%
    \setbox\tw@\hbox{\m@th$#1%
       \widehat{%
          \vrule\@width\z@\@height\ht\z@
          \vrule\@height\z@\@width\wd\z@}$}%
    \dp\tw@-\ht\z@
    \@tempdima\ht\z@ \advance\@tempdima2\ht\tw@ \divide\@tempdima\thr@@
    \setbox\tw@\hbox{%
       \raise\@tempdima\hbox{\scalebox{1}[-1]{\lower\@tempdima\box
\tw@}}}%
    {\ooalign{\box\tw@ \cr \box\z@}}}
\def\curl{\operatorname{curl}}
\newtheorem{theorem}{Theorem}[section]
\newtheorem{lemma}[theorem]{Lemma}
\newtheorem{proposition}[theorem]{Proposition}
\numberwithin{equation}{section}
\numberwithin{equation}{section}
\newcommand\LEF[6]{{{\mathcal{L}({#1^{#2}}({#3}),{#4^{#5}}({#6}))}}}
\newcommand\n{\text{inc}}
\newcommand\s{\text{sca}}
\newcommand\e{+}
\newcommand\Ll{\mathnormal{L}}
\newcommand\ta{\text{t}}
\newcommand\R{\mathcal{R}}
\newcommand\dB{{\partial B}}
\newcommand\dD{{\partial D}}
\newcommand\Div{Div\,}
\newcommand\p{\partial}
\newcommand\Hh{\mathnormal{H}}
\newcommand\Polt{\bigl[\mathcal{P}_{\p D_i} \bigr]}
\newcommand\Vmt{\bigl[\mathcal{T}_{\p D_i} \bigr]}
\newcommand\SB[2]{{S^{#1}_{{#2}{#2},_B}}}
\newcommand\SD[2]{{S^{#1}_{{#2}{#2},_D}}}
\newcommand\KD[2]{{K^{#1}_{{#2}{#2},_D}}}
\newcommand\KB[2]{{K^{#1}_{{#2}{#2},_B}}}
\newcommand\MD[2]{{M^{#1}_{{#2}{#2},_D}}}
\newcommand\MB[2]{{M^{#1}_{{#2}{#2},_B}}}
\newcommand\Varepsilon{\mathlarger{\mathlarger{\epsilon}}}
\newcommand{\Sp}{\mathcal{S}}
\def\curl{\operatorname{curl}}
\def\curl{\operatorname{curl}}
\title{The Foldy-Lax Approximation for the Full Electromagnetic Scattering by Small Conductive Bodies of Arbitrary Shapes} 
\author{
 Ali Bouzekri
\thanks{Laboratoire de Mathematiques Pures et Appliquees, Universite Mouloud Mammeri, Tizi-Ouzou, Algeria
(Email: bouzekri.ali.lmpa@gmail.com).}
\and  Mourad Sini
\thanks{Radon institute (RICAM), Austrian Academy of Sciences, 69 Altenbergerstrasse, A4040, Linz, Austria
(Email: mourad.sini@oeaw.ac.at). This author is partially supported by the Austrian Science Fund (FWF): P28971-N32.}
}
\begin{document}
\graphicspath{{Figures-eps/}}
\maketitle

\begin{abstract}
  We deal with the electromagnetic waves propagation in the harmonic regime. We derive
  the Foldy-Lax approximation of the scattered fields generated by a cluster of small conductive inhomogeneities arbitrarily distributed in a bounded domain $\Omega$ of $\mathbb{R}^3$.
  This approximation is valid under a sufficient but general condition on the number of such inhomogeneities $m$, their maximum radii $\epsilon$ and the minimum distances between them $\delta$,
  of the form 
  \begin{equation}
  (\ln m)^{\frac{1}{3}}\frac{\epsilon}{\delta} \leq C, \nonumber
  \end{equation} 
  where $C$ is a constant depending only on the Lipschitz characters of the scaled inhomogeneities. In addition, we provide explicit error estimates
  of this approximation in terms of aforementioned parameters, $m, \epsilon, \delta$ but also the used frequencies $k$ under the Rayleigh regime. Both the far-fields and the near-fields (stated at a 
  distance $\delta$ to the cluster) are estimated. In particular, for a moderate number of small inhomogeneities $m$, the derived expansions are valid in the mesoscale regime where $\delta \sim \epsilon$.
  
  At the mathematical analysis level and based on integral equation methods, we prove a priori estimates of the densities in the $\Ll^{2,\Div}_t$ spaces instead of the usual $\Ll^2$ spaces (which are not enough).
  A key point in such a proof is a derivation of a particular Helmholtz type decomposition of the densities. Those estimates allow to obtain the needed qualitative as well as quantitative estimates while refining the approximation.
Finally, to prove the invertibility of the Foldy-Lax linear algebraic system, we reduce the coercivity inequality to the one related to the scalar
Helmholtz model. As this linear algebraic system comes from the boundary conditions, such a reduction is not straightforward.
\end{abstract}

\textbf{Keywords}:  Electromagnetic scattering, Small bodies, Multiple scattering, Foldy-Lax approximation.

\bigskip

\textbf{AMS subject classification:}
 35J08, 35Q61, 45Q05.

\pagestyle{myheadings}
 \thispagestyle{plain}

\section{Introduction and main results}
Let $(B_i)_{i=1}^m$ be $m$ open, bounded and simply connected sets containing the origin, with Lipschitz boundaries. 
To these sets, we correspond the small bodies $(D_i)_{i=1}^m$ which are defined as the translations and contractions of the $m$ bodies $({B}_i)_{i=1}^m$, that is 
\begin{equation}
D_i=\epsilon {B}_i+z_i , i=1,...,m
\end{equation}
where $z_i, i=1, ..., m$ are given positions in $\mathbb{R}^3$ and $\epsilon$ a small parameter.

We consider the scattering of a time-harmonic electromagnetic plane wave 
by the perfectly conducting small bodies $(D_i)_{i=1}^m$ formulated as follows (see  \cite{cltn-krss(2013)})
\begin{equation}
 \begin{aligned}
 &\nabla \times E-ik H=0~~\text{in}~D^\e:=\mathbb{R}^3\setminus \cup_{i=1}^m \overline{D_i}, \label{Maxwell-equation}\\
 &\nabla \times H+ik E=0~~\text{in}~ D^\e,\\
 \end{aligned}
\end{equation}
with the boundary condition
\begin{equation}
\nu\times E=0 ~~~\text{on}~ \partial D=\cup_{i=1}^{m}\partial{D_i}. \label{Bondary-condition}
\end{equation}

The total electromagnetic fields are expressed as $E=E^\n+E^\s$, $H=H^\n+H^\s$ where  the indices ``$\n$'' and ``$\s$'' indicate
the incident wave and the  the scattered wave, respectively. The condition \eqref{Bondary-condition} corresponds to the case of perfectly conducting obstacle 
and $\nu$ expresses the unit outward normal vector to the boundary of $D=\cup_{i=1}^{M}\; D_i$. Here the wave number $k$ is defined through the relation $k^2=(\xi\omega+i\sigma)\mu\omega$ where  $\xi,\,\sigma,\,\mu$  
are respectively the electric permittivity, electric conductivity and the magnetic permeability. In the case where $\sigma =0$, and then $k$ is 
real, the scattered wave $(E^\s,H^\s)$ must satisfy the outgoing radiation condition 
\begin{equation}        
        \begin{aligned}
        \lim_{\abs{x}\rightarrow\infty}\left(H^\s(x)\times x-\abs{x}E^\s(x)\right)=0. \label{Rad-Condition}
         \end{aligned}
\end{equation}
Motivated by applications, we restrict ourselves to incident waves of the form $E^\n (x)=Pe^{ikx\cdotp \theta}$,
where $\theta$ is the incident direction, $P\in \mathbb{R}^3$ is the polarization that is orthogonal to $\theta$.\\
We introduce the diameters 
$\epsilon_i={\max}_{x,y \in D_i}d(x,y),~ i\in \{1,...,m\},$
and the distance between two bodies $D_i, D_j, i\neq j$, as $\delta_{i,j}=\min_{x\in D_i, y\in D_j} {{d(x,y)}},$ for every ${i,j\in \{1,...,m\};i\neq j}$
where $d(\cdotp,\cdotp)$ stands for the Euclidean distance. We set \begin{align}
               &\epsilon:=\max_{i\in \{1,...,m\}}{\epsilon_i},
               &\delta:=\min_{i\neq j\in \{1,...,m\}} {\delta_{i,j}}. 
              \end{align} 
We suppose in addition that $\cup_{i=1}^m\overline{D_i}\subset \Omega,$ where $\Omega$ is a  bounded Lipschitz domain such that 
              $$d(\p \Omega,\cup_{i=1}^m\overline{D_i})\geq \delta.$$ 
Let us recall that a bounded open connected domain $B$, is said to be a Lipschitz domain with character $(l_\dB,L_\dB)$ 
if for  each $x\in\partial D$ there exist a coordinate system $(y_i)_{i=1,2,3}$, a truncated cylinder $\mathfrak{C}$ centered at $x$ whose axis is parallel to $y_3$ with
length $l$ satisfying $l_\dB\leq l\leq 2l_\dB$, and a Lipschitz function $f$ that is $\abs{f(s_1)-f(s_2)}\leq L_\dB\abs{s_1-s_2}$ for 
 every $s_1,s_2\in \mathbb{R}^2$ , such that $B\cap \mathfrak{C}=\{(y_i)_{i=1,2,3}: y_3>f(y_1,y_2) \}$ 
 and $\dB\cap \mathfrak{C}=\{(y_i)_{i=1,2,3}: y_3=f(y_1,y_2) \}.$ In this work, we assume that the sequence of Lipschitz characters $(l_{\partial B_i},L_{\partial B_i})^m_{i=1}$ 
 of the bodies $B_i, i=1,..., m,$ is bounded from above and below.
 \bigskip
 
 The scattering problem \eqref{Maxwell-equation} under the boundary condition \eqref{Bondary-condition}  and the radiating condition \eqref{Rad-Condition} is well posed in appropriate spaces under appropriate conditions 
 (see \cite{Ned:Spring2001, cltn-krss(2013)}) which will described later. In addition, when $\Im k$ is different from zero, the scattered electromagnetic fields are fastly decaying at infinity as we have attenuation.
 But when $\Im k=0$, i.e.~in the absence of attenuation, we have the following
 behavior (as spherical-waves) of the scattered electric fields far away from the sources $D_i$'s
 \begin{equation}\label{electric-farfield}
  E^\s(x)=\frac{e^{ik\vert x\vert}}{\vert x\vert} \{ E^{\infty}(\tau) +O(\vert x\vert^{-1}) \}, ~~~ \vert x\vert \longmapsto \infty,
 \end{equation}
and we have a similar behavior for the scattered magnetic field as well:
\begin{equation}\label{magnetic-farfield}
  H^\s(x)=\frac{e^{ik\vert x\vert}}{\vert x\vert}\{H^\infty(\tau) +O(\vert x\vert^{-1})\}, ~~~ \vert x\vert \longmapsto \infty.
 \end{equation}
where $(E^\infty(\tau), H^\infty(\tau))$ is the electromagnetic far field pattern in the direction of propagation $\tau:=\frac{x}{\vert x\vert}$.
 \bigskip

 The goal of this work is to derive the Foldy-Lax approximation (also called the point interaction approximation) of the electromagnetic fields taking into account the whole parameters
 defining the model, i.e.~the three parameters $\epsilon, \delta$ and $m$ defining the conductors 
 but also the wave number $k$. In addition, the error of these approximations are uniform in terms of these parameters where the uniform bounds depend only on the a priori bounds of the Lipschitz character,
of the set of conductors, described above.
 In particular, we deal with the mesoscale regime where $\epsilon \sim \delta$. \\
To describe the results precisely, let us recall some properties and notations. For $i=1, ..., m$, we recall the single layer operator $[S_{ii,_D}^k]: L^2(\partial D_i)\rightarrow H^1(\partial D_i)$, defined as
        \begin{equation}
         [S_{ii,_D}^k](\psi)(x):= \int_{\p D_i}\Phi_k(x,y)\psi(y)~ds(y),\,~ x\in\p D_i,
        \end{equation}
and the double layer operator $[K^k_{ii,_D}]: L^2(\partial D_i)\rightarrow L^2(\partial D_i)$,
$$[K^k_{ii,_D}](\psi)(x)=\int_{\p D_i}\frac{\p \Phi_k}{\p \nu_y}(x,y)\psi(y)~ds(y),\,~ x \in \p D_i,$$
with its adjoint $$[(K^k_{ii,_D})^*](\psi)(x)=\int_{\p D_i}\frac{\p \Phi_k}{\p \nu_x}(x,y)\psi(y)~ds(y),\,~ x \in \p D_i,$$ where $\Phi_k(x,y)=\frac{1}{4\pi}\frac{e^{ik\abs{x-y}}}{\abs{x-y}}$ 
is the Green function for the Helmholtz equation at the wave number $k$.\medskip
                         
The operator $[\lambda I+(K^0_{ii,_D})^*]$ is invertible from $\Ll^2_0(\p D_i)$ onto itself for any complex number $\lambda$ such that $\vert \lambda \vert \geq \frac{1}{2}$, see \cite{Ammaripola, MD:InteqnsaOpethe1997} for instance. The following two quantities will play an important role in the sequel:
                            \begin{align}\label{Polarization-Tensor}
                           [\mathcal{P}_{\p D_i}]:=\int_{\p D_i}[-\frac{1}{2}I+(K^0_{ii,_D})^*]^{-1}(\nu)(y)y^Tds(y),
                           \end{align}
                    and    \begin{align}\label{Virtual-Mass-Tensor}
                           \Vmt:= \int_{\p D_i}[\frac{1}{2}I+(K^0_{ii,_D})^*]^{-1}(\nu)(y)y^Tds(y).
                           \end{align} 
The tensor $[\mathcal{P}_{\p D_i}]$ is  negative-definite symmetric matrix and $\Vmt$ is positive-definite symmetric matrix, 
                     (see {Lemma 5} and {Lemma 6} in \cite{Cedio-Fengya-1998} or Theorem 4.11 in \cite{Ammaripola}). Further, we have the following scales:
                     \begin{align}\label{scalingtensors}
                      [\mathcal{P}_{\p D_i}]=\epsilon^3[\mathcal{P}_{\p B_i}],~\text{and }~[\mathcal{T}_{\p D_i}]=\epsilon^3[\mathcal{T}_{\p B_i}].
                     \end{align} 
             Indeed, \footnote{Recall that $\int_{\p D_i}[-\frac{1}{2}I+(K^0_{ii,_D})^*]^{-1}(\nu)(y)~ z_i^Tds(y)=\int_{\p D_i}[-\frac{1}{2}I+(K^0_{ii,_D})^*]^{-1}(\nu)(y)~~ds(y)~=0$.} 
                     \begin{align*}
                     [\mathcal{P}_{\p D_i}]&=\int_{\p D_i}[-\frac{1}{2}I+(K^0_{ii,_D})^*]^{-1}(\nu)(y)~ (y-z_i)^Tds(y),\\
                                           &=\int_{\p B_i}[-\frac{1}{2}I+(K^0_{ii,_B})^*]^{-1}(\nu)(s_i)~ (\epsilon s+z_i-z_i)^T \epsilon^2~ds(y),\\
                                           &=\epsilon^3 [\mathcal{P}_{\p B_i}],
                    \end{align*} and we get it in the same way for the second identity, after noticing that\footnote{We have $[\frac{1}{2}I+K^{0}_{ii,_{D}}](z_i)=z_i[\frac{1}{2}I+K^{0}_{ii,_{D}}](1)=0$.}  
                    $$\Vmt:= \int_{\p D_i} \nu_y{\biggl[ [\frac{1}{2}I+K^{0}_{ii,_{D}}]^{-1}~( x) \biggr]}^Tds(y)=\int_{\p D_i} \nu_y{\biggl[ [\frac{1}{2}I+K^{0}_{ii,_{D}}]^{-1}~( x-z_i) \biggr]}^Tds(y).$$
                    
          For $i\in\{1,...,m\}$ let $(\mu_i^{\mathcal{T}})^+$, $(\mu_i^{\mathcal{P}})^+$ be the respective maximal eigenvalues of $[\mathcal{T}_{\p B_i}]$, $-[\mathcal{P}_{\p B_i}]$,  
      and let $(\mu_i^{\mathcal{T}})^{-}$, $(\mu_i^{\mathcal{P}})^{-}$ be their minimal ones. We define
                \begin{equation}\label{mudefinition}\begin{aligned}
                 \mu^+=\max_{i\in\{1,...,m\}}((\mu_i^{\mathcal{T}})^{+}, (\mu_i^{\mathcal{P}})^{+}),\\
                 \mu^-=\min_{i\in\{1,...,m\}}((\mu_i^{\mathcal{T}})^{-}, (\mu_i^{\mathcal{P}})^{-}).
                \end{aligned}
                \end{equation} Hence for every vector $\mathcal{C}$, we get 
          \begin{equation} \label{Tensor-Inequalities}  
                 \begin{aligned}
                 \mu^- \abs{\mathcal{C}}^2\epsilon^3\leq [\mathcal{T}_{\p D_i}]~\mathcal{C}\cdotp\mathcal{C}\leq\epsilon^3 \mu^+ \abs{\mathcal{C}}^2,\\
                 \mu^- \abs{\mathcal{C}}^2\epsilon^3\leq [-\mathcal{P}_{\p D_i}]~\mathcal{C}\cdotp\mathcal{C}\leq \epsilon^3\mu^+ \abs{\mathcal{C}}^2.
                \end{aligned}
          \end{equation} 
          The dyadic Green's function is given by  \begin{equation}\label{Dyadic-Green-function}
                                                   \Pi(x,y):=k^2\Phi_k(x,y)I+\nabla_x\nabla_x \Phi_k(x,y),\\
                                                   =k^2\Phi_k(x,y)I-\nabla_x\nabla_y \Phi_k(x,y). 
                                                   \end{equation}
We introduce generic functions $\Varepsilon(\delta^{s},\abs{k}^{l})$ and $\Varepsilon_{k,\delta,m}$ which express error functions as follows 
          \begin{equation}\label{Varepsilon-Generic-Function}
          \begin{aligned}
          &\Varepsilon(\delta^{s},\abs{k}^{l})=O(\frac{\max(1,\abs{k}^{l})}{\delta^{s}})\\
          &\Varepsilon_{k,\delta,m}:=O\Bigl(\frac{(\abs{k}+1)~\ln(m^\frac{1}{3})}{\delta^3}+ \frac{(\abs{k}+1)^2~m^\frac{1}{3}}{\delta^2}+\frac{(\abs{k}+1)^3~m^\frac{2}{3}}{\delta}\Bigr).\\
          \end{aligned}
          \end{equation}
Finally, we set $$C_{Ls}:={\bigl[ \frac{(1+\abs{k}^2)2^6 \Bigl((1+\frac{\abs{k}}{2}) D(\Omega)^\frac{1}{3}+\frac{\abs{k}}{2}D(\Omega)^\frac{2}{3}\Bigr) }{8\pi} +\frac{12^2\norm*{S_{_{B^1}}} }{\pi}+
                                            \frac{63^\frac{1}{2} \abs{k}^2D(\Omega)^{\frac{2}{3}}}{4\pi}\bigr]}$$ where $\norm*{S_{_{B^1}}}:=\norm*{S_{_{B^1}}}_{\mathcal{L}(\Ll^2(\p B^1),\Hh^1(\p B^1))}$ is the norm of the usual single layer potential 
on the unit sphere and $D(\Omega)$ is the diameter of $\Omega$. 
Now, we are ready to state the main result of this work.  
 \begin{theorem}\label{letheorem}
        Let $(\mathcal{A}_i)_{i=1}^m$ and $(\mathcal{B}_i)_{i=1}^m$ be the solutions of the following linear system
                                   \begin{equation}\label{linearsystem}
                                         \begin{aligned}
                                        \mathcal{A}_i&=-\Polt \sum_{(j\neq i)\geq1}^m \left(\Pi_k(z_i,z_j){\mathcal{A}}_j-k^2\nabla\Phi_k(z_i,z_j)\times{\mathcal{B}}_j\right) -\Polt\curl E^\n(z_i),\\
                                        \mathcal{B}_i&=\Vmt\sum_{(j\neq i)\geq1}^m\left(~ -\nabla_x\Phi_k(z_i,z_j)\times {\mathcal{A}}_j~+ \Pi_k(z_i,z_j){\mathcal{B}}_j\right)-\Vmt E^\n(z_i), 
                                        \end{aligned}
                                   \end{equation} which is invertible under the following sufficient condition \begin{align}\label{general-condition}
                                                                                                              C_{Li}:=1-C_{Ls}\frac{\mu^+\epsilon^3}{\delta^3}>0.
                                                                                                              \end{align}
 There exists a constant $C$ depending only on the Lipschitz character of the $B_i$'s such that if
                     \begin{align}\label{General-Condition}
                     \abs{k}^2\epsilon~+ (1+\abs{k}^2)\mu^+\frac{\epsilon^3}{\delta^3}+ ~\left(\frac{\ln({m^\frac{1}{3}})}{\delta^3}+\frac{2\abs{k} m^\frac{1}{3}}{\delta^2} +
                     \frac{m^\frac{2}{3}}{2\delta}\abs{k}^2\right)\epsilon^3 < C,
                    \end{align} then
                                         \begin{enumerate} 
                                          \item the scattered electric field has the following expansion, for $\Im k \geq 0$, for $x\in \mathbb{R}^3\setminus (\cup^m_{i=1}D_i)$ such that $\min_{1\leq i \leq m}d(x,D_i)=\delta$, 
                                  
\begin{equation}\label{Scattered-electric-field}
                                         \begin{aligned}
                                           E^\s(x)=&\sum_{i=1}^m \left(  \nabla\Phi_k(x,z_i)\times\widehat{\mathcal{A}}_i+
                                                    \curl \curl (\Phi_k(x,z_i) \widehat{\mathcal{B}}_i)\right)\\
                                                   &+\frac{(C_{L_i^2}\mu^-)^{-1}}{\mu^+}\times O^\Varepsilon(\frac{\epsilon^4}{\delta^4})+(C_{L_i^2}\mu^-)^{-1}\times O^\Varepsilon\bigl(\frac{\epsilon^7}{\delta^7}\bigr),
\end{aligned}
\end{equation} with $C_{L_i^2}={1-4\mu^+\left(\frac{\ln({m^\frac{1}{3}})}{\delta^3}+\frac{2\abs{k} m^\frac{1}{3}}{\delta^2} +\frac{m^\frac{2}{3}}{2\delta}\abs{k}^2\right)\epsilon^3}$.
                                          \item the far field pattern has the following expansion for $\Im k=0$
                                  \begin{equation}\label{Farfieldpattern}
                                         \begin{aligned}
                                            E^\infty(\tau)=&\frac{ik}{4\pi}\sum_{i=1}^m e^{-ik\tau.z_i}\tau\times 
                                            \bigl(\widehat{\mathcal{A}}_i-ik\tau\times \widehat{\mathcal{B}}_i\bigr) +O\left(~ (\abs{k}^3+\abs{k}^2)~ m\epsilon^4\right)\\
                                            &+\frac{\abs{k}}{2\pi}~\frac{\max(1,\abs{k})}{C_{Li}\mu^-}~O^\varepsilon\Bigl(\frac{\epsilon^4}{\delta^4}\Bigr)m\epsilon^3
\end{aligned}
\end{equation}
\end{enumerate}
and the errors in \eqref{Scattered-electric-field} and \eqref{Farfieldpattern} correspond to
                                        \begin{align}
                                            O^\Varepsilon\bigl(\frac{\epsilon^7}{\delta^7}\bigr):=&
                                                       O\Bigl(\frac{\epsilon^7}{\delta^7}+\Varepsilon(\delta^6,\abs{k}+\abs{k}^2+\abs{k}^3)\epsilon^7+\Varepsilon(\delta^5,\abs{k}^2)\epsilon^7\Bigr),\\
                                             O^\Varepsilon\bigl(\frac{\epsilon^4}{\delta^4}\bigr):=&
                                               O\Bigl(\frac{\epsilon^4}{\delta^4}+(1+\abs{k})\Varepsilon_{k,\delta,m} \epsilon^4+\max(1+\abs{k},\abs{k}^2 )\epsilon\Bigr).
                                        \end{align}

\end{theorem}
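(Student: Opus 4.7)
The natural starting point is an integral representation of the scattered electromagnetic field $(E^{\text{sca}},H^{\text{sca}})$ via vectorial single-layer potentials on each $\partial D_i$ with tangential surface densities $(\phi_i,\psi_i)$, combined with the Stratton--Chu formula. Enforcing the perfect conductor boundary condition $\nu\times E=0$ on each $\partial D_i$ yields a boundary integral system for the densities, coupling every boundary through the off-diagonal operators involving $\Pi_k(x,y)$ and $\nabla_x\Phi_k(x,y)$ for $x\in\partial D_i,\;y\in\partial D_j$, $j\neq i$. I would first isolate the diagonal, well-understood operators (those built from $S^0_{ii,_D}$ and $K^0_{ii,_D}$) and bring the off-diagonal coupling to the right hand side, so that the algebra matches the structure of the polarization and virtual-mass tensors in \eqref{Polarization-Tensor}--\eqref{Virtual-Mass-Tensor}.

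The decisive a priori step is estimating the densities in the $\Ll^{2,\Div}_t$ norm rather than merely $\Ll^2$, as the introduction warns. My approach here would be the Helmholtz-type decomposition advertised in the abstract: split each tangential density into a surface-gradient part and a surface-curl part (plus a harmonic contribution on non-simply-connected components, which are excluded by hypothesis), estimate the two components separately from the scalar $S^0_{ii,_D}$ and $\tfrac12 I+(K^0_{ii,_D})^*$ mapping properties, and then reassemble. The resulting bounds must be uniform in $\epsilon$ via the scaling $D_i=\epsilon B_i+z_i$ and in $m$ via a discrete maximum principle / $\ell^2$-bookkeeping on the positions $z_i$. This step is the technical heart of the paper and, in my view, the main obstacle: controlling the multiple-scattering cross terms uniformly in $(m,\epsilon,\delta,k)$ requires a careful count of the number of centers $z_j$ contained in dyadic shells around each $z_i$, which is exactly where the $\ln(m^{1/3})/\delta^3$, $m^{1/3}/\delta^2$ and $m^{2/3}/\delta$ factors of $\Varepsilon_{k,\delta,m}$ enter, and where the smallness condition \eqref{General-Condition} becomes necessary for a Neumann series argument.

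Once the densities are controlled, I would identify the vector moments
\[
\mathcal{A}_i:=\int_{\partial D_i}\bigl[\tfrac12 I+(K^0_{ii,_D})^*\bigr]^{-1}(\nu)\,(\text{moment of }\phi_i)\,ds,\qquad
\mathcal{B}_i:=\int_{\partial D_i}\bigl[-\tfrac12 I+(K^0_{ii,_D})^*\bigr]^{-1}(\nu)\,(\text{moment of }\psi_i)\,ds,
\]
so that $\Polt$ and $\Vmt$ appear directly through \eqref{Polarization-Tensor}--\eqref{Virtual-Mass-Tensor}. Replacing $\Phi_k(x,y)$ by $\Phi_k(z_i,z_j)$ and $\nabla\Phi_k(x,y)$ by $\nabla\Phi_k(z_i,z_j)$ in the cross terms, and $E^{\text{inc}}(x),\,\curl E^{\text{inc}}(x)$ by their values at $z_i$, then produces precisely \eqref{linearsystem}. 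Each replacement generates a remainder controlled by $\epsilon/\delta$ (from mean-value estimates on $\nabla\Phi_k$ over $D_i\times D_j$) multiplied by the density bounds from Step~2; summing these remainders reproduces the $O^{\Varepsilon}(\epsilon^4/\delta^4)$ and $O^{\Varepsilon}(\epsilon^7/\delta^7)$ error structures.

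For invertibility of \eqref{linearsystem}, the coercivity inequality \eqref{Tensor-Inequalities} for $[\mathcal{T}_{\partial B_i}]$ and $-[\mathcal{P}_{\partial B_i}]$ gives termwise positivity, but the cross coupling through $\Pi_k(z_i,z_j)$ and $\nabla\Phi_k(z_i,z_j)\times(\cdot)$ is not manifestly coercive because the latter mixes $\mathcal{A}$ and $\mathcal{B}$. I would reduce the vector inequality to the scalar Helmholtz one by testing against $(\mathcal{A}_i,\mathcal{B}_i)$, grouping the $\Pi_k$-blocks with the scalar Helmholtz Foldy matrix whose invertibility is classical (Challa--Sini), and absorbing the $\nabla\Phi_k\times$ blocks into the diagonal using $|\nabla\Phi_k(z_i,z_j)|\lesssim \delta_{i,j}^{-2}+|k|\delta_{i,j}^{-1}$ together with \eqref{general-condition}; this is where the constant $C_{Ls}$ and the condition $C_{Li}>0$ arise. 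Finally, for part~(1), substituting the solved $\widehat{\mathcal{A}}_i,\widehat{\mathcal{B}}_i$ back into the representation formula at $x$ with $d(x,D_i)\ge\delta$ gives \eqref{Scattered-electric-field}; for part~(2), the Atkinson--Wilcox asymptotics $\Phi_k(x,y)\sim \frac{e^{ik|x|}}{4\pi|x|}e^{-ik\tau\cdot y}$ converts the near-field formula into \eqref{Farfieldpattern}, the $e^{-ik\tau\cdot z_i}$ factors coming from replacing $y\in D_i$ by $z_i$ with error $O(k\epsilon)$ per body, hence the $m\epsilon^4$ correction term.
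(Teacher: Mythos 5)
Your plan captures the broad architecture of the paper (magnetic dipole boundary integral equation, Helmholtz decomposition of the density in $\Ll^{2,\Div}_t$, dyadic shell counting of the $z_j$ producing the $\ln(m^{1/3})$, $m^{1/3}$, $m^{2/3}$ factors, and the far-field asymptotics from $\Phi_k(x,y)\sim \frac{e^{ik|x|}}{4\pi|x|}e^{-ik\tau\cdot y}$). But there are two places where the plan, as written, cannot be carried out.

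First, the derivation of the algebraic system \eqref{linearsystem} does not come from Taylor-expanding the kernels and re-defining $\mathcal{A}_i$, $\mathcal{B}_i$ via the inverse operators $[\pm\tfrac12 I+(K^0_{ii,_D})^*]^{-1}(\nu)$ as your formulas suggest; those definitions conflate the tensors with the densities and will not close. In the paper $\mathcal{A}_i=\int_{\p D_i} a_i^{[1]}\,ds$ and $\mathcal{B}_i=\int_{\p D_i}\nu\, u_i\,ds$ are plain zeroth and first moments of the decomposed density, and the tensors $\Polt,\Vmt$ only appear as coefficients because the BIE is tested against a family of special dual test functions: a vector field $\psi_l$ solving $[-\tfrac12 I+M^0_{ii,_D}](\nu\times\psi_l)=-\nu\times V_l$ (with $V_l$ chosen so $\curl V_l=e_l$), and a scalar $\phi$ solving $[-\tfrac12 I+K^0_{ii,_D}]\phi=x-z_i$. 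It is the identity $\int_{\p D_i}[\tfrac12 I+(K^0_{ii,_D})^*]^{-1}(\nu_i^l)(x-z_i)\,ds$ that creates $\Vmt$ (and the analogous one creates $\Polt$). Without this duality step you do not get the linear system in the stated form, so the argument has a hole precisely where the structure of \eqref{linearsystem} must emerge.

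Second, the invertibility argument as you sketch it ("absorb the $\nabla\Phi_k\times$ blocks into the diagonal using $|\nabla\Phi_k|\lesssim\delta_{i,j}^{-2}+|k|\delta_{i,j}^{-1}$") only handles the sub-dominant $\Theta^k$ coupling. The dominant $\Sigma^0$ block, whose $(i,j)$-entry is $\nabla_x\nabla_y\Phi_0(z_i,z_j)$, has no sign and summing naive bounds over $j$ introduces an $m$-factor that destroys the $m$-independent condition $C_{Li}=1-C_{Ls}\mu^+\epsilon^3/\delta^3>0$. The paper's key step is the mean-value-theorem identity
\begin{equation*}
\nabla_x\nabla_y\Phi_0(z_i,z_j)=\frac{48^2}{\pi^2\delta^6}\int_{\p B_{\delta/4}(z_i)}\!\int_{\p B_{\delta/4}(z_j)}\Phi_0(x,y)\,\nu_x\nu_y^T\,ds(y)\,ds(x),
\end{equation*}
which rewrites $\bigl<\Sigma^0\mathcal{Q}\widehat{\mathcal C},\mathcal{Q}\widehat{\mathcal C}\bigr>$ as $\bigl<S^0_{\p\widehat\Omega}\mathcal U,\mathcal U\bigr>_{\Ll^2(\p\widehat\Omega)}$ minus diagonal correction terms, and then uses the positivity of the single-layer operator $S^0_{\p\widehat\Omega}$ on the union of small spheres. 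This is the actual mechanism that "reduces Maxwell to Helmholtz" and yields the $m$-uniform constant $C_{Ls}$; merely referencing the scalar Foldy matrix is not enough, because the vector-valued kernel $\Pi_k(z_i,z_j)$ is not itself of single-layer type. Both of these missing ideas are essential, so the proposal does not yet constitute a proof.
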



The approximation in (\ref{Scattered-electric-field}) and (\ref{Farfieldpattern}) are called the point-interaction approximations or the Foldy-Lax approximations as the dominant field is reminiscent to the field 
describing the interactions between the points $z_i, i=1, .., m$, with the scattering coefficients given by the polarization tensors $\Polt$ and $\Vmt$, see \cite{Martin:2006, C-H-S:2014} for particular situations. 
Since the pioneering works of Rayleigh till Foldy, the first and original goal of such approximations was to reduce the computation of the fields generated by a cluster of small bodies to inverting an algebraic system 
(called the Foldy linear algebraic system), see \cite{Martin:2006} for more information. With our approximations above, and regarding the full Maxwell system, such  a goal is reached with high generality as we take into account all the parameters, $m, \epsilon, \delta$ and $k$, modeling 
the scattering by the cluster of small 
conductors $D_i$'s.  
\bigskip

In the recent twenty years or so, there were different and highly important fields where such kind of approximations are key tools. 
Let us mention few of them which are of particular interest to us:
\begin{enumerate}
 \item Let us start with the mathematical imaging field where
the small bodies can model impurities or small tumors, for instance, that one should localize and estimate the sizes from the measured fields (either near fields or far-fields), see \cite{A-I-L-P:2007}. In this case, based on our
 approximations, we can indeed localize the small bodies by reconstructing the points $z_i$, via MUSIC type algorithms \cite{A-I-L-P:2007, C-H-S:2014}, and then estimate the polarization tensors. 
 From these polarization tensors, one can derive lower and upper estimates of the small bodies' sizes, see \cite{A-C-S:2016}. The small bodies can also model electromagnetic nanoparticles. Imaging using electromagnetic nanoparticles as contrast agent is a recent and highly attractive imaging modality that uses 
 special properties of the nanoparticles
 to create high contrasts in the tissue and then enhance the resolution of permittivity reconstruction for instance. There are at least two types of such special properties: one is related to the plasmonic nanoparticles 
 (which are nearly resonant nanoparticles but might create high dissipation) and the other one related to the all dielectric nanoparticles (which are characterized by their high refraction indices), see \cite{K-C-B-A:2017},
 for instance. 
 
\bigskip
 
\item A second field where this kind of approximations are useful is the material sciences. Indeed, arranging appropriately the small bodies in a given bounded domain, the whole cluster will generate electromagnetic 
fields which are close to the fields generated by related indices of refraction (or permittivities and permeabilities). These indices of refraction are dependent on the properties of the small bodies, as the size, 
geometry and their own possible contrasts in addition to the used frequencies. This opens the door to the possibility of creating desired and new materials. Such ideas are already tested and justified to some 
extent mathematically in the framework of the homogenization theory. However, this theory is based on the periodicity (or randomness) in distributing the small bodies. As we can see it from the approximations we provide
above, we can achieve similar goals but without assuming the periodicity. In addition, and as far the electromagnetic waves are concerned, we can handle in a unified way, the generation of volumetric metamaterials, 
Gradient metasurfaces and also metawires,  \cite{Y-L:2014, T:2017}. These properties will be quantified and justified in a future work were we plan to handle more general type of inhomogeneities than the conductive ones described in this work.
\end{enumerate}

\bigskip

Our contribution in this work is to have succeeded in handling the full Maxwell model by taking into account (explicitly) all the parameters modeling the small conductors $m, \epsilon$ and $\delta$ but also the used frequency $k$. 
To our best knowledge, there is no result in the literature where such approximations are provided with such generality and precision. At the mathematical analysis level, and as we are using integral equations methods,
we needed to derive an a priori estimate of the related densities. The first key observation here is to derive it in the $\Ll^{2,\Div}_t$ spaces instead of the usual $\Ll^2$ spaces. As a second observation, to derive such estimates,
we used a particular decomposition of the densities, see \Cref{Key-Decomposition} or Theorem \ref{Decomposition-density-estimate}, which allows to obtain the needed qualitative as well as quantitative estimates while refining the approximation.
Finally, to prove the invertibility of the algebraic system (\ref{linearsystem}) under the general condition (\ref{linearsystem}), the key point is to have reduced the coercivity inequality to the one related to scalar
Helmholtz model. Let us emphasize here that as this linear algebraic system comes form the boundary conditions, such a reduction of Maxwell to Helmholtz is not a trivial one (even, a priori, not a natural one). 

The only restriction we have in our condition (\ref{General-Condition}) is the appearance of the factor $\ln(m)$. 
At the technical level, see the last part of the proof of Theorem \ref{Decomposition-density-estimate}, its appearance is due to the fact the singularity of the dyadic Green's function (\ref{Dyadic-Green-function}) 
is of the order 3 (in contrast to the ones of the Green's functions for the Laplace or Lam\'e related models). 
We believe that this factor can be removed using more pde tools to invert the Calderon operator, see (\ref{Lin-Sys-Departur-Equation}), and hope to report on this in the near future.

The closest published works (i.e. deriving the Foldy-Lax type of approximations) are those by  A. Ramm in one side and those by V. Maz'ya, A. Movchan and M. Nieves in another side.
The several works by A. Ramm on Maxwell are derived more in a formal way, see \cite{Ramm-1, Ramm-2}. In addition, condition of the type $\frac{\epsilon}{\delta}<<1$ are used (meaning at least that the mesoscale 
regimes where $\epsilon \sim \delta$ are not handled) and without clarifying the rates. Finally, and unfortunately, to our opinion the form of the derived algebraic system is unclear and questionable. 
In a series of works dedicated to the Laplace and Lam\'e models (assuming $k=0$), V. Maz'ya, A. Movchan and M. Nieves
proposed a method which indeed takes into account the parameter and state the results in the mesoscale regimes too, see \cite{M-M:MathNach2010, M-M-N:MMS:2011, M-M-N:Springerbook:2013}. 
One possible limit to their approach is the need of the maximum principle in handling the link between the system on the boundary to fields outside.
This might be a handicap for tackling the Maxwell system for which such maximum principles are not at our hands.
\bigskip

The remaining part of the paper is dedicated to the proof of Theorem \ref{letheorem} and it is arranged as follows. In section \ref{Section-a-priori-estimate}, we recall and discuss the well posedness of the scattering 
problem via integral equation method and then derive the key a priori estimates of the densities. In section \ref{Section-first-approximation}, we derive the fields approximations with the corresponding non homogeneous
linear system. In section \ref{Section-invertibility-algebraic-system}, we justify and quantify the invertibility of the algebraic system and in section \ref{Section-end-of-the-proof}, we combine the estimates in the last two sections to complete the proof of Theorem \ref{letheorem}. 

\section{Existence, unique solvability and an a priori estimation of the density}\label{Section-a-priori-estimate}
\subsection{Preliminaries}

   Let us recall few properties of the surface divergence which will be important in our later analysis, see (Section 4 in \cite{DMM96} and Chapter 2 in \cite{cltn-krss(1983)}) for more details. 
   First, we recall the surface gradient of a smooth function $\phi$ on $\partial D$, $\nabla_\ta$, as  $\nabla_\ta \phi:=\nabla \phi-(\nu\cdotp\nabla\phi)\nu$  where $\nu$ is the exterior 
   unit normal to $\partial D$. Then the (weak) surface divergence for a tangential field $a$ is defined using the duality
                                                                     \begin{equation}\label{Div-definition}
                                                                      \int_\dD \phi \Div a~ds=-\int_\dD\nabla_\ta\phi\cdotp a~ds,  
                                                                     \end{equation} for every $\phi\in\mathcal{C}^\infty(\p D)$.

If $a$ is a tangential field for which $\Div a$ exists in the sense above, and it is in $L^1(\partial D)$ for instance, then, taking $\phi(x)=x_i$ in \eqref{Div-definition}, we have
                                                  \begin{equation}
                                                   \int_\dD x\Div a(x)~ds(x)=-\int_\dD a(x)~ds(x) \label{Divrel2}
                                                  \end{equation}  
and taking $\phi(x)=1$, we have \begin{equation}
                                  \int_\dD \Div a(x)~ds(x)=0. \label{Divrel-average-zero}
                                  \end{equation}  
When $a:=\nu\times u$ for a certain sufficiently smooth vector field $u$, we get \begin{align}\label{npcurl}
                                                                                   \Div a= -\nu\cdotp\curl u.
                                                                                   \end{align} 
Further, for a scalar function $\psi,$ being $\psi~a$ tangential (i.e.~$\psi~a\cdotp\nu=\psi~(a\cdotp \nu)=0$), the following identity holds, $\Div(\psi a)=\nabla_t\psi \cdotp a+ \psi  \Div a,$ and
hence \begin{equation}\label{Div-Identity-transform}
      \int_{\p D_i}\psi~a~ds
      =\int_{\p D_i}(x-z_i)(\nabla\psi(x)\cdotp a(x)+\psi(x) \Div~a(x))~ds(x).
      \end{equation}
Indeed, $\int_{\p D_i}\psi~a~ds=\int_{\p D_i}\nabla(x-z_i)~\bigl(\psi~a\bigr)(x)~ds(x)=\int_{\p D_i}(x-z_i)\Div (\psi~a)(x)~ds(x),$ here $\nabla(x-z_i)=I$ where $I$ stands for the identity matrix of $\mathbb{R}^3\times \mathbb{R}^3.$
\bigskip

 The spaces $\Ll^{p}_\ta(\dD)$ and $\Ll^{p,Div}(\p D)$ denote respectively the space of all tangential fields of 
of $\Ll^p(\p D);$ and the subspace of $\Ll^p_\ta(\p D)$ that have an $\Ll^p$ weak surface divergence, precisely 
$$\Ll^{p}_\ta(\dD)=\{a\in\Ll^{{p}}(\p D);\, a\cdotp\nu=0\},$$
$$\Ll^{p,Div}_\ta(\dD)=\{a\in\Ll^{{p}}_\ta(\p D);\, \Div a\in \Ll^p_0(\p D)  \}$$
where $\Ll^p_0(\p D):=\{u\in \Ll^p(\p D), \mbox{ such that } \int_{\partial D}u~ds=0\} $.
\bigskip
                                                  
\subsection{Existence and uniqueness of the solution} 

The solution to the problem \eqref{Maxwell-equation} under the boundary condition \eqref{Bondary-condition}  and the radiating conditions \eqref{Rad-Condition} can be expressed 
 in terms of boundary integral equation (see \cite{cltn-krss(2013), Ned:Spring2001}), under certain conditions in appropriate spaces that will be 
 specified later, using either one of the representations 
                         \begin{align}
                          E(x)=E^i(x)+\curl\int_{\partial D^{\e}}\Phi_k(x,y)a(y)~ds(y), \hspace{2cm}x\in\mathbb{R}^3\setminus (\bar{D}:=\cup_{i=1}^m \bar{D_i}) \label{cas1}
                         \end{align} 
                         \begin{align}
                         E(x)=E^i(x)+\curl\curl\int_{\partial D^{\e}}\Phi_k(x,y)a(y)~ds(y),  \hspace{2cm}x\in\mathbb{R}^3\setminus \bar{D} \label{cas2}     
                         \end{align} 
or a linear combination of the two, where $a$ is the unknown vector density to be found to solve the problem.\\
Let us consider the representation \eqref{cas1}, i.e. for a tangential field $a$ and $s\in D^\e$
\begin{equation}
 E^\s(s)=\curl\int_{\partial D^{\e}}\Phi_k(s,y)a(y)~ds(y),
\end{equation} 
and let be $\bigl\{\Gamma_{+}(x),\Gamma_{-}(x),\,~ x\in\cup_{i=1}^m\p D_i\bigr\}$  a family of doubly truncated cones with a vertex at $x$ such that $\Gamma_{\pm}(x)\cap D^\mp=\emptyset$,
we have for almost  every $x\in \cup_{i=1}^m\p D_i$ 
(see \cite{DMM96}) \begin{equation}
                           \lim_{{\substack{{s \rightarrow x}\\{s\in \Gamma_{\pm}}(x)}}} E^\s(s)=
                           (\mp\frac{1}{2}\nu\times a+ \curl\int_{\partial D}\Phi_k(x,y)a(y)~ds(y)
                           \end{equation} 
where the integral is taken in the principal value of Cauchy sense, and the identity must be understood in the sens of trace operator, 
                        then using the condition \eqref{Bondary-condition}, we get
                        \begin{equation}
                         \nu\times\lim_{{\substack{{s \rightarrow x}\\{s\in \Gamma_{\pm}}(x)}}} E^\s(s)=[\pm\frac{1}{2}I+M_{\p D}^k](a)(x):=
                         \pm\frac{1}{2}a+ \nu\times\curl\int_{\partial D} \Phi_k(x,y)a(y)~ds(y)
                        \end{equation} 
where $[\pm\frac{1}{2}I+M_{\p D}^k]$ is called the magnetic dipole operator. Consequently, to solve the scattering problem we need to solve the integral equation \begin{equation}
                                           [\frac{1}{2}I+M_{\p D}^k](a)(x)=-\nu\times E^\n,\; ~~ \mbox{ on } \cup^m_{i=1} \partial D_i\label{Equation-to-solve}
                                          \end{equation}
                                          or,
                                          \begin{equation}\label{equation-to-be-solved}
                                           [\frac{1}{2}I+M_{{\p D_i}}^k](a)(x_i)+[\sum_{(j\neq i)\geq1}^m M_{ij,{_D}}^k](a)=
                                           -\nu\times E^\n(x_i),\; ~~ x_i \in \partial D_i.
                                          \end{equation} 
In this case the magnetic field is represented by  
\begin{equation*} 
H^\s(x)=-ik\curl\curl\int_{\partial D^{\e}}\Phi_k(x,y)a(y)~ds(y).
\end{equation*} 

Further it satisfies \begin{equation}\label{Continuity-magnetic-Dipol-operator}
                                    \lim_{{\substack{{s \rightarrow x}\\{s\in \Gamma_{\pm}}(x)}}}\nu\times H^\s(s)=[N_{\p D}^k](a)=k^2[\nu\times S_{\p D}^k](a)+[\nu\times \nabla S_{\p D}^k](\Div a)
                                   \end{equation} which means that $\nu\times (H^\s)^+=(\nu\times H^\s)^-$ i.e~$H$ is continuous across the boundary, the operator $N_k$ is called electric dipole operator.

We can write the equations in \eqref{equation-to-be-solved} in a compact form as follows
\begin{equation}
(\frac{1}{2}I+\mathcal{M}_D+\mathcal{M}_N)A=-\nu\times E^I, \label{Lin-Sys-Departur-Equation} 
\end{equation}
where $\mathcal{A}=(a_1,...,a_m)^T$ is a column matrix which vectorial components are $a_i:=a/\partial {D_i}$. Similarly, $E^I=(E^\n_1,......,E^\n_m)$ with $E^\n_i=E^\n/\p D_i$ 
and $\mathcal{M}_D$ is the diagonal matrix operator given by
\begin{equation*}
 \mathcal{M}_D:=\left\{\begin{aligned}
             &M_{ij,_{D}}^k \,\,\text{if }~i=j\\
             &0 \,\,\text{ otherwise}
            \end{aligned}
\right.
\end{equation*}
and finally $\mathcal{M}_N$ is the matrix operator with null diagonal
\begin{equation*}
 \mathcal{M}_N=\left\{\begin{aligned}
             &0 \,\,\text{ if }~ i=j\\
             &M_{ij,_{D}}^k \,\,\text{ if }~i\neq j
            \end{aligned}
\right.
\end{equation*}
where
\begin{equation*}
 M_{ij,_{D}}^ka(x):=\nu\times\int_{\partial D_j}\nabla_x\Phi_k(x,y)\times a(y)~ds(y),\, \text{for every } x\in \partial D_i.
\end{equation*}
For $k\in\mathbb{C}\setminus{\{0\}}$ such that $\Im k\geq 0$, the operators $(\pm\frac{1}{2}I+M_{ii,_D}^k)$ are Fredholm with index zero 
from $\Ll_\ta^{2,\Div}(\dD_i)$  into it self, furthermore $(\pm\frac{1}{2}I+M_{ii,_D}^k)$ are Fredholm from $\Ll_\ta^{2,\Div}(\p D_i)/\Ll_\ta^{2,0}(\p D_i)$ into it 
self, likewise for $\Ll_\ta^{2}(\p D_i)$. Moreover if $k$  is not a Maxwell eigenvalue for $D_i$ then  the operators are in fact isomorphisms, see Theorem 5.3 \cite{DMM96} and the remark after, and \cite{MM2004}.

Let us notice that when $\Im k >0,$ then $k$ is not a Maxwell eigenvalue. In addition, when $\Im k=0$ and as the radius of $D_i$ is small, by a scaling argument, this condition on $k$ is obviously fulfilled. 
As $\pm\frac{1}{2}I+\mathcal{M}_D$ is an isomorphism and $\mathcal{M}_N$ is compact (since the kernel of each component is of class $\mathcal{C}^{\infty}$), 
the operators  \begin{equation}
                \pm\frac{1}{2}I+\mathcal{M}_D+\mathcal{M}_N:
                \prod_{i=1}^{m}\mathcal{E}(\partial D_i)\longrightarrow\prod_{i=1}^{m}\mathcal{E}(\partial D_i),\label{operateur1}
                \end{equation} where $\mathcal{E}:= \Ll^{2}_\ta,\,~ \Ll^{2,Div}_\ta,$
 are Fredholm with zero index, so to show that the operator above is in fact an isomorphism it is enough to show that the homogeneous problem (i.e~ $E^\n=0$), has the unique 
 identically null solution density that is $\mathcal{A}=0$.
 We derive from \eqref{Equation-to-solve}, for $E^\n\equiv0 $ and the uniqueness to the exterior boundary problem that 
 \begin{align}
  E^\s(x)=\curl\int_{\partial D^{\e}}\Phi_k(x,y)a(y)~ds(y)\equiv 0,\,~ x\in\mathbb{R}^3\setminus {\overline{D}},
 \end{align} taking the rotational, we obtain \begin{equation}
                                           H^\s(x)=-ik \curl\curl\int_{\partial D^{\e}}\Phi_k(x,y)a(y)~ds(y)\equiv0,\,~x \in\mathbb{R}^3\setminus {\overline{D}},
                                           \end{equation} then going to the boundary using \eqref{Continuity-magnetic-Dipol-operator} we get
                                                                                      \begin{equation*}
                                                                                       \nu\times {H^\s}^-=0,\,~x\in \dD=\cup_{i=1}^m\dD_i.
                                                                                      \end{equation*} 
                                                                                      As the homogeneous interior boundary problem admits the unique identically null solution, we get $H^\s(x)=0,\; x \in D$ and hence taking the rotational gives us $E^\s(x)=0,\; x \in D$ and ultimately $(\nu\times E^\s)^-=0$ on $\p D$. Finally we have                                                                                      \begin{equation}
                                                                                       [\pm\frac{1}{2}I+M_{ii,_D}^k+\sum_{(j\neq i)\geq1}^m M_{ij,{_D}}^k](a)=0
                                                                                      \end{equation} and taking the difference of the two identities we 
                                                                                       get $a_i=0$ for every $i\in\{1,...,m\},$ and yield that $a\equiv0$ on $\p D=\cup_{i=1}^m\p D_i$.
This shows that we have existence of the solution of our original scattering problem and it can be represented as \eqref{cas1} with a unique tangential density $a$.
The uniqueness of the solution for the original scattering problem is deduced in the same way as it is done in Theorem 6.10 in \cite{cltn-krss(2013)} for instance. 

\subsection{A priori estimates of the densities}
 In order to derive suitable estimates of the densities $a_i, i=1, ..., m$, we need to use the Helmholtz decomposition based on the following operators, which are  isomorphism (see Theorem 5.1 and Theorem 5.3 in \cite{DMM96}),
  \begin{equation}\label{isomorphism-decomposition}
   \begin{aligned}
  \nu\cdotp\curl \SD{0}{i}&:\Ll^{2,0}_\ta(\p D_i)\longrightarrow\Ll^2_0(\p D_i),\\
  \nu\times\nabla \SD{0}{i}&:\Ll^2_0(\p D_i)\longrightarrow\Ll_{t}^{2,0}(\p D_i),\\
  \nu\times\SD{0}{i}&:\Ll^{2,0}_{t}(\p D_i)\longrightarrow(\Ll^{2,Div}_\ta\setminus\Ll^{2,0}_\ta)(\p D_i):=\Ll^{2,Div}_{t}(\p D_i)\setminus\Ll^{2,0}_{t}(\p D_i).
 \end{aligned}
 \end{equation}
The following decomposition holds, 
\begin{proposition}\label{Key-Decomposition}
 Each element $V$ of $\Ll^{p,Div}_\ta(\p D_i)$ can be decomposed as 
\begin{equation}\label{Helmohotz-decomposition}
V=\mathfrak{V} + \nu\times\nabla \mathfrak{v}
\end{equation} 
 where
\begin{equation}
 \begin{aligned}
\mathfrak{V}~&\in \Ll^{p,Div}_\ta(\p D_i)\setminus\Ll^{p,0}_\ta(\p D_i),\\ 
\nu\times\nabla \mathfrak{v}~ &\in \Ll^{p,0}_\ta(\p D_i),~ \mathfrak{v}\in\Hh^1(\p D_i)\setminus C;
\end{aligned} 
\end{equation}
which satisfy
\begin{align}
 \norm*{\mathfrak{V}}_{\Ll^{2}(\p D_i)}\leq C_1\epsilon\norm*{V}_{\Ll^{2,\Div}_\ta(\p D_i)},\label{Div-part-L2estimates}\\
 \norm*{\mathfrak{V}}_{\Ll^{2,\Div}_\ta(\p D_i)}\leq C_2\norm*{V}_{\Ll^{2,\Div}_\ta(\p D_i)},\label{Div-part-L2Divestimates}\\
 \norm*{\nu\times\nabla \mathfrak{v}}_{\Ll^{2,0}_\ta(\p D_i)}\leq C_3\norm*{V}_{\Ll^{2,\Div}_\ta(\p D_i)},\label{nu-Grad-part-L2estimates}
\end{align}  and \begin{equation}
                     \norm*{\mathfrak{v}}_{\Ll^{2}(\p D_i)} \leq C_4 \epsilon\norm*{a}_{\Ll^{2,\Div}_\ta(\p D_i)}\label{Grad-part-v-L2}
                     \end{equation}
where $(C_i)_{i=1,2,3,4.}$ are constants which depend only on ${B}_i's$,
\end{proposition}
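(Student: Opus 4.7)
\textbf{Proof proposal for Proposition \ref{Key-Decomposition}.}

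The plan is to build the decomposition by inverting, in turn, two of the isomorphisms listed in \eqref{isomorphism-decomposition}, and then to extract the $\epsilon$-dependence in \eqref{Div-part-L2estimates}--\eqref{Grad-part-v-L2} via a scaling argument that reduces every operator on $\partial D_i$ to the corresponding one on $\partial B_i$.

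First I would construct $\mathfrak{V}$. Given $V\in \Ll^{2,Div}_t(\p D_i)$, set $g:=\Div V\in \Ll^2_0(\p D_i)$ (the zero mean follows from \eqref{Divrel-average-zero}). Since $\nu\cdot\curl \SD{0}{i}\colon \Ll^{2,0}_t(\p D_i)\to \Ll^2_0(\p D_i)$ is an isomorphism, there is a unique $\varphi\in \Ll^{2,0}_t(\p D_i)$ with $\nu\cdot\curl \SD{0}{i}\varphi=-g$. Define $\mathfrak{V}:=\nu\times \SD{0}{i}\varphi$. Using the identity \eqref{npcurl}, $\Div \mathfrak{V}=-\nu\cdot\curl \SD{0}{i}\varphi=g=\Div V$, so $W:=V-\mathfrak{V}$ is a tangential field with zero surface divergence, i.e.\ $W\in \Ll^{2,0}_t(\p D_i)$. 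Now I invert the second isomorphism in \eqref{isomorphism-decomposition}: there exists a unique $\psi\in \Ll^2_0(\p D_i)$ with $\nu\times\nabla \SD{0}{i}\psi=W$. Setting $\mathfrak{v}:=\SD{0}{i}\psi\in \Hh^1(\p D_i)$ yields $V=\mathfrak{V}+\nu\times\nabla \mathfrak{v}$, which is the decomposition \eqref{Helmohotz-decomposition}.

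To get the quantitative bounds \eqref{Div-part-L2estimates}--\eqref{Grad-part-v-L2} with the correct $\epsilon$-scaling, I would pull back every operator to the fixed surface $\partial B_i$ through the change of variables $x=\epsilon\xi+z_i$. A short computation with $\Phi_0(x,y)=\tfrac{1}{4\pi|x-y|}$ gives the scalings $\SD{0}{i}\phi(x)=\epsilon\,\SB{0}{i}\widetilde{\phi}(\xi)$, $\nu\cdot\curl \SD{0}{i}\phi(x)=(\nu\cdot\curl \SB{0}{i}\widetilde{\phi})(\xi)$, and $\nabla_t \SD{0}{i}\phi(x)=(\nabla_t\SB{0}{i}\widetilde{\phi})(\xi)$, together with the norm scalings $\|\phi\|_{L^2(\p D_i)}=\epsilon\|\widetilde\phi\|_{L^2(\p B_i)}$ and $\|\Div \phi\|_{L^2(\p D_i)}=\|\widetilde{\Div\phi}\|_{L^2(\p B_i)}/\epsilon\cdot\epsilon=\cdots$; tracking these through gives that $\nu\cdot\curl \SD{0}{i}$ and $\nu\times\nabla \SD{0}{i}$ are scale invariant while $\SD{0}{i}$ has $L^2\to L^2$ operator norm of order $\epsilon$. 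Applying the uniform invertibility estimates on $\partial B_i$ (which depend only on the Lipschitz character of $B_i$) then yields $\|\varphi\|_{L^2(\p D_i)}\le C\|g\|_{L^2(\p D_i)}\le C\|V\|_{\Ll^{2,Div}_t(\p D_i)}$ and hence $\|\mathfrak{V}\|_{L^2(\p D_i)}\le \|\SD{0}{i}\varphi\|_{L^2(\p D_i)}\le C_1\epsilon\|V\|_{\Ll^{2,Div}_t(\p D_i)}$, which is \eqref{Div-part-L2estimates}. The bound \eqref{Div-part-L2Divestimates} follows by adding $\|\Div \mathfrak{V}\|_{L^2(\p D_i)}=\|\Div V\|_{L^2(\p D_i)}$, and \eqref{nu-Grad-part-L2estimates} follows at once from $\nu\times\nabla\mathfrak{v}=V-\mathfrak{V}$ combined with \eqref{Div-part-L2Divestimates}. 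Finally, the same scale invariance of $\nu\times\nabla \SD{0}{i}$ gives $\|\psi\|_{L^2(\p D_i)}\le C\|W\|_{L^2(\p D_i)}\le C\|V\|_{\Ll^{2,Div}_t(\p D_i)}$, and then \eqref{Grad-part-v-L2} comes from $\|\mathfrak{v}\|_{L^2(\p D_i)}=\|\SD{0}{i}\psi\|_{L^2(\p D_i)}\le C_4\epsilon\|\psi\|_{L^2(\p D_i)}$.

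I expect the only real difficulty to be bookkeeping the scaling rather than any deep analytical step: one must verify that each invocation of an inverse of a boundary operator produces a constant independent of $\epsilon$, and this is exactly where the assumption that the sequence $(l_{\p B_i},L_{\p B_i})$ of Lipschitz characters is uniformly bounded is used, so that the operator norms of the inverses on $\partial B_i$ admit a single uniform bound. Once the two reductions via the isomorphisms in \eqref{isomorphism-decomposition} are in place and the single-layer scaling $\SD{0}{i}=\epsilon\,\SB{0}{i}$ is applied, the four estimates drop out with essentially the same proof, differing only in which norm is taken at the final step.
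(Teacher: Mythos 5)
Your proof is correct and follows essentially the same route as the paper: invert $\nu\cdot\curl\SD{0}{i}$ to produce the density for $\mathfrak{V}$, observe that $V-\mathfrak{V}$ has zero surface divergence and invert $\nu\times\nabla\SD{0}{i}$ for the remainder, then extract the $\epsilon$-scaling by pulling everything back to $\partial B_i$ (the paper even isolates the scaling identities \eqref{L2div-scaling}--\eqref{curlSD-scaling} and $\SD{0}{i}=\epsilon\,\SB{0}{i}$ as a preliminary lemma, which you reproduce). The only cosmetic difference is that you write the construction sequentially while the paper poses the ansatz \eqref{tosolve} and reads off both densities; you have also tacitly fixed a sign in the equation for the first density where the paper writes $\nu\cdot\curl\SD{0}{i}(w)=\Div V$ rather than $=-\Div V$, but since the operator is an isomorphism this does not affect any estimate.
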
 

To prove Proposition \ref{Key-Decomposition}, we need the following identities.
\begin{lemma}
     For $x_i\in\p D_i$, $s_i\in\p B_i$, with $x_i=\epsilon s_i+z$, and $\hat{a}(s_i)=a(\epsilon s_i+z_i),$ we have
    the following scaling identities
   \begin{equation}\label{L2div-scaling}
       \epsilon\norm*{\hat{a}}_{\Ll^2_{t}(\p B_i)}=\norm*{a}_{\Ll^2_{t}(\p D_i)},\,~
       \norm*{\Div\hat{a}}_{\Ll^2_{0}(\p B_i)}=\norm*{\Div a}_{\Ll^2_{0}(\p D_i)},
    \end{equation}
    and 
    \begin{align}
      \norm*{[\nu\times\nabla \SD{0}{i}]^{-1}}_\LEF{\Ll_{t}}{2,0}{\p D_i}{\Ll_{0}}{2}{\p D_i}&= \norm*{[\nu\times\nabla S_{ii,{_B}}^0]^{-1}}_\LEF{\Ll_{t}}{2,0}{\p B_i}{\Ll_{0}}{2}{\p B_i},\label{nablaSD-scaling}\\ 
      \norm*{[\nu\cdotp\curl \SD{0}{i}]^{-1}}_\LEF{\Ll_{0}}{2}{\p D_i}{\Ll_{t}}{2,0}{\p D_i}&= \norm*{[\nu\cdotp\curl \SB{0}{i}^0]^{-1}}_\LEF{\Ll_{0}}{2}{\p B_i}{\Ll_{t}}{2,0}{\p B_i}\label{curlSD-scaling}.
     \end{align}

\end{lemma}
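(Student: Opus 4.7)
The plan is to prove everything by a direct change of variables $x=\epsilon s+z_i,\ y=\epsilon t+z_i$ between $\partial D_i$ and $\partial B_i$, keeping track of the powers of $\epsilon$ that come from (a) the surface measure, (b) the surface differential operators and (c) the Newtonian kernel. Then the operator identities drop out, and the norms of the inverses are shown to be scale-invariant because the same power of $\epsilon$ appears on both sides of the quotient defining the operator norm.

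\textbf{Step 1: the $\Ll^2$ and $\Ll^{2,\mathrm{Div}}$ scalings.} Since the surface measure satisfies $ds(x)=\epsilon^{2}\,ds(s)$, a change of variables gives
\begin{equation*}
\norm*{a}_{\Ll^2_t(\partial D_i)}^{2}=\int_{\partial D_i}|a(x)|^{2}\,ds(x)=\epsilon^{2}\int_{\partial B_i}|\hat a(s)|^{2}\,ds(s)=\epsilon^{2}\norm*{\hat a}_{\Ll^2_t(\partial B_i)}^{2},
\end{equation*}
which is the first identity in \eqref{L2div-scaling}. For the second identity I would use the duality definition \eqref{Div-definition} with a test function $\phi$ on $\partial D_i$ and set $\hat\phi(s):=\phi(\epsilon s+z_i)$. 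Because the surface gradient scales like $\nabla_t^{x}=\epsilon^{-1}\nabla_t^{s}$ (a direct consequence of $x=\epsilon s+z_i$), a change of variables in $\int_{\partial D_i}\nabla_t\phi\cdot a\,ds$ yields $\int_{\partial D_i}\phi\,\Div a\,ds=\epsilon\int_{\partial B_i}\hat\phi\,\Div\hat a\,ds$, whence pointwise $\Div_x a(\epsilon s+z_i)=\epsilon^{-1}\Div_s\hat a(s)$. Squaring and integrating, the two factors of $\epsilon$ (one from the measure, $-2$ from the operator) cancel and give $\norm*{\Div a}_{\Ll^2_0(\partial D_i)}=\norm*{\Div\hat a}_{\Ll^2_0(\partial B_i)}$.

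\textbf{Step 2: scaling of the static single layer.} Because $\Phi_0(x,y)=\frac{1}{4\pi|x-y|}$ and $|x-y|=\epsilon|s-t|$, a change of variables in the defining integral gives the kernel identity
\begin{equation*}
[\SD{0}{i}\psi](x)=\int_{\partial D_i}\Phi_0(x,y)\psi(y)\,ds(y)=\epsilon\int_{\partial B_i}\Phi_0(s,t)\hat\psi(t)\,ds(t)=\epsilon\,[\SB{0}{i}\hat\psi](s),
\end{equation*}
for $\psi$ scalar (the vector-valued case is componentwise identical). Differentiating in $x$ and using $\nabla_x=\epsilon^{-1}\nabla_s$, the factor $\epsilon$ in the above display cancels and yields $\nabla_x\SD{0}{i}\psi(x)=\nabla_s\SB{0}{i}\hat\psi(s)$. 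Since the outward normal at $x=\epsilon s+z_i$ coincides with the one at $s$, we deduce the pointwise operator identities
\begin{equation*}
[\nu\times\nabla\SD{0}{i}\psi](x)=[\nu\times\nabla\SB{0}{i}\hat\psi](s),\qquad
[\nu\cdot\curl\SD{0}{i}a](x)=[\nu\cdot\curl\SB{0}{i}\hat a](s).
\end{equation*}

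\textbf{Step 3: deducing \eqref{nablaSD-scaling}--\eqref{curlSD-scaling}.} Writing $T_D:=\nu\times\nabla\SD{0}{i}$ and $T_B:=\nu\times\nabla\SB{0}{i}$, Step 2 says $T_D\psi(x)=T_B\hat\psi(s)$, so $\psi=T_D^{-1}\phi$ if and only if $\hat\psi=T_B^{-1}\hat\phi$. By Step 1, both $\Ll^2_0$-norms and $\Ll^{2,0}_t$-norms on $\partial D_i$ equal $\epsilon$ times the corresponding norms on $\partial B_i$, so the factors of $\epsilon$ cancel in the Rayleigh quotient defining the operator norm,
\begin{equation*}
\norm*{T_D^{-1}}=\sup_{\phi\neq0}\frac{\norm*{T_D^{-1}\phi}_{\Ll^2_t(\partial D_i)}}{\norm*{\phi}_{\Ll^2_0(\partial D_i)}}
=\sup_{\hat\phi\neq0}\frac{\epsilon\,\norm*{T_B^{-1}\hat\phi}_{\Ll^2_t(\partial B_i)}}{\epsilon\,\norm*{\hat\phi}_{\Ll^2_0(\partial B_i)}}=\norm*{T_B^{-1}}.
\end{equation*}
The same argument with $T_D:=\nu\cdot\curl\SD{0}{i}$ gives \eqref{curlSD-scaling}.

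\textbf{Expected difficulty.} None of the steps is delicate: the only point that requires a little care is the definition of $\Div$ via duality in Step 1 (one must justify the change of variables at the weak level) and the observation that the exterior unit normal is invariant under the dilation/translation so that $\nu$ can be pulled out of the scaling; both are routine. The rest is bookkeeping of powers of $\epsilon$.
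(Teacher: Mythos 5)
Your proof is correct and follows essentially the same route as the paper: change variables $x=\epsilon s+z_i$, compute how the surface measure, the surface differential operators and the Newtonian kernel scale, deduce the pointwise intertwining identities for $\nu\times\nabla\SD{0}{i}$ and $\nu\cdot\curl\SD{0}{i}$, and observe that the matching factors of $\epsilon$ from \eqref{L2div-scaling} cancel in the Rayleigh quotient defining the operator norms. The only (inessential) organizational difference is that you first write down the zero-order scaling $[\SD{0}{i}\psi](x)=\epsilon[\SB{0}{i}\hat\psi](s)$ and then differentiate, whereas the paper differentiates under the integral sign before changing variables; both give the same pointwise identities.
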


\begin{proof} We derive the identities in (\ref{L2div-scaling}) as follows
\begin{align*}
\norm*{a}_{\Ll^2_{t}(\p D_i)}=\left(\int_{\p D} \abs{a(x)}^2ds(x)\right)^\frac{1}{2}
                  =\left(\int_{\p B} \abs{\hat{a}(s)}^2\epsilon^2ds(s)\right)^\frac{1}{2}
                  &=\left(\epsilon^2\int_{\p B} \abs{\hat{a}(s)}^2~ds(s)\right)^\frac{1}{2}\\
                  &=\epsilon\norm*{\hat{a}}_{\Ll^2_{t}(\p B_i)}.
\end{align*} 
For the second identity, one has for $\phi \in \Hh^1(\p D_i),$ \footnote{The gradient stands for the surface gradient. } 
             \begin{align*}
             \int_{\p D_i}\Div_{x} a(x) \phi(x)~ds(x)&=\int_{\p D_i}a(x)\cdotp \nabla_x \phi(x)~ds(x)
             =\int_{\p D_i}\widehat{a}(s)\cdotp \frac{1}{\epsilon}\nabla_s \widehat{\phi}(s)\epsilon^2ds(x),\\
             &=\int_{\p D_i}\Div_s\widehat{a}(s)\frac{1}{\epsilon} \widehat{\phi}(s)~ds(x)
             =\int_{\p D_i}\frac{1}{\epsilon}\Div_s\widehat{a}(s) \phi(x)~ds(x),
             \end{align*} then $\Div_{x} a(x)= \frac{1}{\epsilon}\Div_s\widehat{a}(s)$. It remains to take the norm to conclude.
             
Concerning \eqref{nablaSD-scaling} we have,\begin{align*}
                                        [\nu\times\nabla \SD{0}{i}](u)(x)
                                        =
                                        \nu\times\nabla \frac{1}{4\pi}\int_{\p D_i} \frac{1}{\abs{x-y}}u(y)~ds(y)
                                        =
                                        \nu\times\frac{1}{4\pi}\int_{\p D_i} \frac{1}{\abs{x-y}^3} (x-y)u(y)~ds(y),
                                       \end{align*} hence, for $x_i=\epsilon s_i+z_i$ and $y_i=\epsilon t_i+z_i$ , and $\hat{u}(s_i)=u(\epsilon t_i+z_i)$
                                                          \begin{align*}
                                                           [\nu_{x_i}\times\nabla \SD{0}{i}](u)(x)
                                                           &=
                                                           \nu_{s_i}\times\frac{1}{4\pi}\int_{\p B_i} \frac{1}{\abs{\epsilon s_i+z_i-\epsilon t_i-z_i}^3} (\epsilon s_i+z_i-\epsilon t_i-z_i)u(\epsilon t_i+z_i)~\epsilon^2~ds(t),\\
                                                           &=
                                                           \nu_{s_i}\times\frac{1}{4\pi}\int_{\p B_i} \frac{1}{\epsilon^3\abs{ s_i-t_i}^3}~ \epsilon~(s_i- t_i)u(\epsilon t_i+z_i)~\epsilon^2~ds(t),\\
                                                           &=
                                                           \nu_{s_i}\times\frac{1}{4\pi}\int_{\p B_i} \frac{1}{\abs{ s_i-t_i}^3}~(s_i- t_i)\hat{u}(t_i)~ds(t)=[\nu_{x_i}\times\nabla \SB{0}{i}](\hat{u})(s_i).
                                                          \end{align*} 
From the equality $[\nu_{x_i}\times\nabla \SD{0}{i}]([\nu_{x_i}\times\nabla \SD{0}{i}]^{-1}(u))=u,$ we get replacing in the previous identity $u$ by $[\nu_{x_i}\times\nabla \SD{0}{i}]^{-1}(u)$, that 
$[\nu_{x_i}\times\nabla \SB{0}{i}](\widehat{[\nu_{x_i}\times\nabla \SD{0}{i}]^{-1}(u)})=\widehat{u}$. Finally inverting the left-hand side operator, we have the scales 
                                                                                                                \begin{align}
                                                                                                                 \widehat{[\nu_{x_i}\times\nabla \SD{0}{i}]^{-1}(u)}
                                                                                                                =
                                                                                                                [\nu_{x_i}\times\nabla \SB{0}{i}]^{-1}\widehat{u}.\label{no-scaling-1} 
                                                                                                                \end{align} 
As \begin{align*}
          \norm*{[\nu\times\nabla \SD{0}{i}]^{-1}}_\LEF{\Ll_{t}}{2,0}{\p D_i}{\Ll_{0}}{2}{\p D_i}
          =
          \sup_{\stackrel{u\in\Ll^2_0(\p D_i)}{u\neq0}}\frac{\norm*{[\nu\times\nabla \SD{0}{i}]^{-1}(u)}_{\Ll^2(\p D_i)}}{\norm*{u}_{\Ll^2(\p D_i)}}
         \end{align*} \eqref{L2div-scaling} implies 
         \begin{align*}
          \norm*{[\nu\times\nabla \SD{0}{i}]^{-1}}_\LEF{\Ll_{t}}{2,0}{\p D_i}{\Ll_{0}}{2}{\p D_i}
          =
          \sup_{\stackrel{\widehat{u}\in\Ll^2_0(\p B_i)}{\widehat{u}\neq0}}\frac{\epsilon\norm*{\widehat{[\nu\times\nabla \SD{0}{i}]^{-1}(u)}}_{\Ll^2(\p B_i)}}{\epsilon\norm*{\widehat{u}}_{\Ll^2(\p B_i)}},
         \end{align*} 
and using \eqref{no-scaling-1} \begin{align*}
                                    \norm*{[\nu\times\nabla \SD{0}{i}]^{-1}}_\LEF{\Ll_{t}}{2,0}{\p D_i}{\Ll_{0}}{2}{\p D_i}
                                    =
                                    \sup_{\stackrel{\widehat{u}\in\Ll^2_0(\p B_i)}{\widehat{u}\neq0}}\frac{\norm*{{[\nu\times\nabla \SB{0}{i}]^{-1}(\widehat{u})}}_{\Ll^2(\p B_i)}}{\norm*{\widehat{u}}_{\Ll^2(\p B_i)}},
                                    \end{align*} the conclusion follows immediately. Similarly, we can derive \eqref{curlSD-scaling}. 
\end{proof}
\begin{proof}(Of Proposition \ref{Key-Decomposition})
It suffices to seek for the solution of the following equation 
 \begin{align}\label{tosolve}
  \nu\times\nabla \SD{0}{i}(v)+\nu\times\SD{0}{i}(w)=V.
 \end{align} Taking the surface divergence we have, $\nu\cdotp\curl \SD{0}{i}(w)=\Div V$ and then using \eqref{isomorphism-decomposition},
    $w=[\nu\cdotp\curl \SD{0}{i}]^{-1}\Div V$. Using \eqref{nablaSD-scaling}
    we get the estimate \begin{align}\label{The-first-estimation-decomposition}
           \norm*{w}_{\Ll_{t}^{2,0}{(\p D_i)}}\leq \norm*{[\nu\cdotp\curl S_{\p B_i}^0]^{-1}}_\LEF{\Ll_{0}}{2}{\p B_i}{\Ll_{t}}{2,0}{\p B_i}
           \norm*{(\Div V)}_{\Ll_{0}^2{(\p D_i)}}.
           \end{align}  
Put $\mathfrak{V}=\nu\times\SD{0}{i}(w)$, \begin{align*}
                                          \norm*{\mathfrak{V}}_{\Ll^{2}{(\p D_i)}}
                                          &=\left(\int_{\p D_i}[\nu\times\SD{0}{i}(w)]^2~ds\right)^\frac{1}{2}
                                          =\left(\int_{\p B_i}\left(\nu\times\int_{\p B_i}\frac{1}{\epsilon\abs{s-t}}\widehat{w}(t)\epsilon^2~ds(t)\right)^2 \epsilon^2~ds(s)\right)^\frac{1}{2},\\
                                          &=\epsilon^2\left(\int_{\p B_i}\left(\nu\times\int_{\p B_i}\frac{1}{\abs{s-t}}\widehat{w}(t)~~ds(t)\right)^2ds(s)\right)^\frac{1}{2}
                                          =\epsilon^2 \norm*{(\nu\times \SB{0}{i}(\widehat{w}))}_{\Ll^{2}{(\p B_i)}}.
                                          \end{align*}                                             
Using \eqref{The-first-estimation-decomposition} for the last coming inequality, we have
                                     \begin{equation}\label{V-in-L20}
                                     \begin{aligned}         
                                     \norm*{\mathfrak{V}}_{\Ll^{2}{(\p D_i)}}
                                             &=\epsilon^2 \norm*{(\nu\times \SB{0}{i}(\widehat{w}))}_{\Ll_{t}^{2}{(\p B_i)}},\\
                                             &\leq\epsilon^2 \left(\norm*{(\nu\times \SB{0}{i}(\widehat{w}))}_{\Ll_{t}^{2}{(\p B_i)}}+
                                              \norm*{\Div(\nu\times \SB{0}{i}(\widehat{w}))}_{\Ll_{0}^{2}{(\p B_i)}}\right),\\
                                              &\leq\norm*{(\nu\times \SB{0}{i}(\widehat{w}))}_{\Ll_{t}^{2,\Div}{(\p B_i)}},\\
                                              &\leq\epsilon^2 \norm*{\nu\times S_{\p B_i}^0}_\LEF{\Ll_{t}}{2,0}{\p B_i}{\Ll_{t}}{2,\Div}{\p B_i} \norm*{\widehat{w}}_{\Ll_{t}^{2,0}{(\p B_i)}},\\
                                              &\leq C^{_{B_i}}_1 \epsilon^2 \norm*{\widehat{V}}_{\Ll_{t}^{2,\Div}{\setminus(\p B_i)}}=
                                                   C^{_{B_i}}_1 \epsilon \norm*{V}_{\Ll_{t}^{2,\Div}{(\p D_i)}},
                                    \end{aligned}\end{equation} here 
                                    $C^{_{B_i}}_1:=\norm*{\nu\times \SB{0}{i}}_\LEF{\Ll_{0}}{2}{\p B_i}{\Ll_{t}}{2,\Div}{\p B_i}
                                    \norm*{[\nu\cdotp\curl \SB{0}{i}]^{-1}}_\LEF{\Ll_{t}}{2,0}{\p B_i}{\Ll_{t}}{2,0}{\p B_i},$ 
                                    which gives \eqref{Div-part-L2estimates}. The inequality \eqref{Div-part-L2Divestimates} is an immediate consequence. Now, \eqref{tosolve} becomes $\nu\times\nabla \SD{0}{i}(v)=V-\mathfrak{V},$ with $\Div (V-\mathfrak{V})=0$, then we get successively 
                                     \begin{equation} \label{Estimate-v-V}\begin{aligned}
                                      \norm*{v}_{\Ll_{0}^{2}{(\p D_i)}}
                                      &\leq
                                        \norm*{[\nu\times\nabla \SD{0}{i}]^{-1}}_\LEF{\Ll_{t}}{2,0}{\p D_i}{\Ll_{0}}{2}{\p D_i}
                                      \norm*{V-\mathfrak{V}}_{\Ll^{2,0}_\ta{(\p D_i)}},\\
                                      &\leq \norm*{[\nu\times\nabla \SB{0}{i}]^{-1}}_\LEF{\Ll_{t}}{2,0}{\p B_i}{\Ll_{0}}{2}{\p B_i}
                                      \norm*{V-\mathfrak{V}}_{\Ll^{2,0}_\ta{(\p D_i)}}\leq C^{_{B_i}}_3   \norm*{V}_{\Ll^{2,Div}_\ta{(\p D_i)}}, 
                                      \end{aligned} \end{equation} where $C^{_{B_i}}_3:=2\norm*{[\nu\times\nabla \SB{0}{i}]^{-1}}_\LEF{\Ll_{t}}{2,0}{\p B_i}{\Ll_{0}}{2}{\p B_i}$. We put $\mathfrak{v}=\SD{0}{i}(v)$
                                      hence $\nu\times\nabla\mathfrak{v}=\nu\times\nabla\SD{0}{i}(v)$ with $\norm*{\nu\times\nabla \mathfrak{v}}_{\Ll^{2,0}_{t}(\p D_i)}
                                      =\norm*{\nu\times\nabla\SD{0}{i}(v)}_{\Ll^{2,0}_{t}(\p D_i)}=\norm*{V-\mathfrak{V}}_{\Ll^{2,0}_{t}(\p D_i)}.$ Then with \eqref{V-in-L20} in mind, we derive the estimate (\ref{nu-Grad-part-L2estimates}) 
                                      \begin{align*}
                                       \norm*{\nu\times\nabla \mathfrak{v}}_{\Ll^{2,0}_{t}(\p D_i)}
                                       &=\norm*{V-\mathfrak{V}}_{\Ll^{2,0}_{t}(\p D_i)}\leq \norm*{V}_{\Ll^{2}_{t}(\p D_i)}+\norm*{\mathfrak{V}}_{\Ll^{2}_{t}(\p D_i)},\\
                                       &\leq\norm*{V}_{\Ll^{2,\Div}_{t}(\p D_i)}+C^{_{B_i}}_1\epsilon\norm*{V}_{\Ll^{2,\Div}_{t}(\p D_i)},\\
                                       &\leq (C^{_{B_i}}_3:=(1+C^{_{B_i}}_1))\norm*{V}_{\Ll^{2,\Div}_{t}(\p D_i)}.
                                      \end{align*} Concerning \eqref{Grad-part-v-L2} we have $\norm*{\mathfrak{v}}_{\Ll^{2}(\p D_i)}=\norm*{\SD{0}{i}(v)}_{\Ll^{2}(\p D_i)},$
hence \begin{align*}
       \norm*{\mathfrak{v}}_{\Ll^{2}(\p D_i)}&=\left(\int_{\p B_i}\left(\int_{\p B_i}\frac{1}{\epsilon\abs{s-t}}\widehat{v}(t)\epsilon^2~ds(t)\right)^2 \epsilon^2~ds(s)\right)^\frac{1}{2},\\
                                            &=\epsilon^2\norm*{\SB{0}{i}(v)}_{\Ll^{2}(\p B_i)}\leq \epsilon^2\left(\norm*{\SB{0}{i}(\widehat{v})}_{\Ll^{2}(\p B_i)}+\norm*{\nabla_t\SB{0}{i}(\widehat{v})}_{\Ll^{2}(\p B_i)}\right),\\
                                            &\leq \epsilon^2 \norm*{\SB{0}{i}}_{\mathcal{L}({\Ll^{2}(\p B_i)},\Hh^1(\p B_i))}\norm*{\widehat{v}}_{\Ll^{2}(\p B_i)}=
                                            \epsilon \norm*{\SB{0}{i}}_{\mathcal{L}({\Ll^{2}(\p B_i)},\Hh^1(\p B_i))}\norm*{v}_{\Ll^{2}(\p D_i)},
      \end{align*} and with \eqref{Estimate-v-V} we get, $$\norm*{\mathfrak{v}}_{\Ll^{2}(\p D_i)}\leq \epsilon \bigl(C^{_{B_i}}_4:=\norm*{\SB{0}{i}}_{\mathcal{L}({\Ll^{2}(\p B_i)},\Hh^1(\p B_i))}C^{_{B_i}}_3)
      \norm*{V}_{\Ll^{2,Div}_\ta{(\p D_i)}},$$ to conclude we put for $l=1,2,3,4,$ $C_l=\max{i\in\{1,...,m\}C^{_{B_i}}_l}.$
\end{proof}

We have the following theorem
\begin{theorem} \label{Decomposition-density-estimate}
There exist constants $C_{B,2}$, $C_{B,1}$ and $C_e$ which depend only on $B_i'$s and independent of their number such that if
\begin{equation}                    
                    \begin{aligned}\label{NeumannSeries-Cond}
                     C_{B,1}    \abs{k}^2\epsilon < 1,\,~\,~C_{B,2}    \left(\frac{\ln{m^\frac{1}{3}}}{\delta^3}+\frac{2k m^\frac{1}{3}}{\delta^2} +\frac{m^\frac{2}{3}}{2\delta}k^2\right)\epsilon^3<1
                    \end{aligned}
\end{equation}
 then  \begin{equation}\label{Estimates-Of-The-Density-a}
       \norm*{a}_{\Ll^{2,\Div}_{t}}\leq C_e\epsilon
       \end{equation} 
Further, in view of \Cref{Key-Decomposition} each $a_i\in\Ll^{2,\Div}_{t}(\p D_i)$ can be decomposed as the sum of 
\begin{equation}\label{Decomposition-Dnsity-a}
 \begin{aligned}
  \begin{aligned}
a_i^{[1]}~\in \Ll^{p,Div}_\ta(\p D_i)\setminus\Ll^{p,0}_\ta(\p D_i), 
\end{aligned} 
\text{~and,~}\\
\begin{aligned}
 a_i^{[2]}=\nu\times\nabla u_i \in~\in \Ll^{p,0}_\ta(\p D_i),~ u\in\Hh^1(\p D_i)\setminus C;
\end{aligned}
\end{aligned} 
\end{equation}
which satisfy
\begin{align}
 \norm*{a^{[1]}}_{\Ll^{2}(\p D_i)}&\leq C_1\epsilon\norm*{a}_{\Ll^{2,\Div}_\ta(\p D_i)},\label{Decomposition-Estimate-a_1-L2}\\
 \norm*{a^{[1]}}_{\Ll^{2,\Div}_\ta(\p D_i)}&\leq C_2\norm*{a}_{\Ll^{2,\Div}_\ta(\p D_i)},\label{Decomposition-Estimate-a_1-L2Div}\\
 \norm*{a^{[2]}}_{\Ll^{2,0}_\ta(\p D_i)}&\leq C_3\norm*{a}_{\Ll^{2,\Div}_\ta(\p D_i)},\label{Decomposition-Estimate-a_2}\\
 \norm*{u_i}_{\Ll^{2}(\p D_i)} &\leq C_4 \epsilon\norm*{a}_{\Ll^{2,\Div}_\ta(\p D_i)},\label{Decomposition-Estimate-u-L2Div}
\end{align}where $(C_i)_{i=1,2,3,4.}$ are constants which depends only on the shape of the ${B}_i$'s (i.e~their Lipschitz character) and not on their number $m$.
\end{theorem}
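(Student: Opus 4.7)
The plan is to return to the boundary integral equation \eqref{Lin-Sys-Departur-Equation} and run a Neumann-series argument after inverting the block-diagonal part. Writing the coupled system as
\begin{equation*}
(\tfrac{1}{2}I+\mathcal{M}_D)\mathcal{A}=-\nu\times E^I-\mathcal{M}_N\mathcal{A},
\end{equation*}
the isomorphism property of $\tfrac{1}{2}I+M^k_{ii,_D}$ on $\Ll^{2,\Div}_{t}(\p D_i)$ recalled in the previous subsection, combined with the scaling $D_i=\epsilon B_i+z_i$ and a short perturbation argument based on $M^k_{ii,_D}=M^0_{ii,_D}+O(|k|^2\epsilon)$, produces a uniform bound on $(\tfrac{1}{2}I+M^k_{ii,_D})^{-1}$ depending only on the Lipschitz characters of the $B_i$'s, provided $C_{B,1}|k|^2\epsilon<1$. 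This reduces the proof to estimating the norm of $(\tfrac{1}{2}I+\mathcal{M}_D)^{-1}\mathcal{M}_N$ on $\prod_{i=1}^{m}\Ll^{2,\Div}_{t}(\p D_i)$.

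The technical heart is to control a typical cross term $M^k_{ij,_D}a_j$ for $i\ne j$, whose kernel $\nabla_x\Phi_k(x,y)\times\cdot$ behaves like $|x-y|^{-2}$, and whose surface divergence yields a kernel behaving like $|x-y|^{-3}$. To compensate these singularities I would apply \Cref{Key-Decomposition} to each density $a_j=a_j^{[1]}+\nu\times\nabla u_j$. The piece $a_j^{[1]}$ already gains an $\epsilon$-factor in $\Ll^2$ by \eqref{Div-part-L2estimates}, so after integrating the $|x-y|^{-2}$ kernel against it over $\p D_j$ (which contributes another $\epsilon^2$ through the surface measure) one gets a contribution of order $\epsilon^3|z_i-z_j|^{-2}$. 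For the gradient part $\nu\times\nabla u_j$, I would use the integration-by-parts identity \eqref{Div-Identity-transform} together with \eqref{Divrel-average-zero} and \eqref{npcurl} to transfer the derivative onto the kernel (producing an extra $|z_i-z_j|^{-1}$) while bringing out a factor $y-z_j$ of size $\epsilon$; coupling this with the $\Ll^2$-bound \eqref{Grad-part-v-L2} on $u_j$ yields contributions of the form $\epsilon^3|z_i-z_j|^{-n}$ with $n\in\{1,2,3\}$, where $n=3$ is the worst case coming from $\nabla_x\nabla_y\Phi_k$ acting on $u_j$.

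Summing the resulting inequalities over $j\ne i$ and using the minimum spacing $\delta$ between the $z_i$'s, a standard packing argument gives
\begin{equation*}
\sum_{j\ne i}\frac{1}{|z_i-z_j|}\lesssim\frac{m^{2/3}}{\delta},\qquad \sum_{j\ne i}\frac{1}{|z_i-z_j|^2}\lesssim\frac{m^{1/3}}{\delta^2},\qquad \sum_{j\ne i}\frac{1}{|z_i-z_j|^3}\lesssim\frac{\ln m}{\delta^3}.
\end{equation*}
The logarithm in the last sum is precisely the origin of the $\ln m^{1/3}$ appearing in \eqref{NeumannSeries-Cond}, and this is the step I expect to be the main obstacle: one has to match each singular power with enough $\epsilon$-factors via the decomposition, and simultaneously keep track of the $|k|$-powers attached to each piece of $\nabla_x\Phi_k$ and its iterated derivatives in order to obtain exactly the coefficients of \eqref{NeumannSeries-Cond}.

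Once this is achieved, the second smallness condition in \eqref{NeumannSeries-Cond} makes the operator norm of $(\tfrac{1}{2}I+\mathcal{M}_D)^{-1}\mathcal{M}_N$ strictly less than one, so the Neumann series converges and
\begin{equation*}
\|\mathcal{A}\|_{\prod_i\Ll^{2,\Div}_{t}(\p D_i)}\le C\,\|\nu\times E^I\|_{\prod_i\Ll^{2,\Div}_{t}(\p D_i)}.
\end{equation*}
Since $E^\n(x)=Pe^{ikx\cdot\theta}$ is smooth and $|\p D_i|\sim\epsilon^2$, the right-hand side is $O(\epsilon)$, which yields \eqref{Estimates-Of-The-Density-a}. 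The decomposition statement \eqref{Decomposition-Dnsity-a} and the estimates \eqref{Decomposition-Estimate-a_1-L2}--\eqref{Decomposition-Estimate-u-L2Div} then follow immediately by applying \Cref{Key-Decomposition} component-wise to each $a_i\in\Ll^{2,\Div}_{t}(\p D_i)$, the constants $C_1,\dots,C_4$ being independent of $i$ and of $m$ thanks to the scaling identities \eqref{nablaSD-scaling}--\eqref{curlSD-scaling}.
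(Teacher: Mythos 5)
Your overall architecture coincides with the paper's: factor out the block-diagonal $\tfrac12 I+\mathcal M_D$, control $(\tfrac12 I+M^k_{ii,D})^{-1}$ by perturbing $M^0_{ii,D}$ (yielding the condition $C_{B,1}|k|^2\epsilon<1$), bound the off-diagonal block $\mathcal M_N$ by a shell-packing argument that produces exactly the $\ln m^{1/3}$, $m^{1/3}$, $m^{2/3}$ sums, and close by Neumann series. The one place where you genuinely diverge from the paper's Lemma~\ref{Inter-Lemma} is in how the extra $\epsilon$-factor is extracted from the interaction term $M^k_{ij,D}$. You propose to apply Proposition~\ref{Key-Decomposition} to the density, split $b_j=b_j^{[1]}+\nu\times\nabla v_j$, and harvest the extra $\epsilon$ from the $\Ll^2$-smallness of $b_j^{[1]}$ and of $v_j$ in \eqref{Div-part-L2estimates} and \eqref{Grad-part-v-L2}. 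The paper does not invoke the decomposition for this step at all: it uses the integration-by-parts identity \eqref{Mij-trasfomation} (the concrete form of \eqref{Div-Identity-transform}) directly on the whole density, bringing out the factor $(y-z_j)=O(\epsilon)$ at the price of one additional derivative on the kernel, and it treats the $\Ll^2$- and surface-divergence components of $M^k_{ij,D}(b)$ separately, using the zero-mean property of $\Div b$ to gain the $\epsilon$ there. Both routes exploit the same underlying fact — that the $\Ll^{2,\Div}_\ta$-structure is one power of $\epsilon$ better than bare $\Ll^2$ — and both produce $\epsilon^3/\delta_{i,j}^3$-type bounds; the paper's version is a little more economical because Proposition~\ref{Key-Decomposition} is then reserved for Section~\ref{Section-first-approximation} only, while your version applies it both here and later. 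One small technical imprecision worth noting: the raw kernel of $\Div M^k_{ij,D}(b)$ is $\partial_{\nu_x}\Phi_k\sim|x-y|^{-2}$ (acting on $\Div b$) plus $k^2\Phi_k\sim k^2|x-y|^{-1}$, not $|x-y|^{-3}$; the $|x-y|^{-3}$ behaviour responsible for the $\ln m$ shows up after mean-centering $\partial_{\nu_x}\Phi_k$ against the zero-mean $\Div b$, and from $\nabla_x\nabla_y\Phi_k$ in the $\Ll^2$-part after the integration by parts — both of which you do account for when you list contributions $\epsilon^3|z_i-z_j|^{-n}$ with $n\le 3$, so the slip is harmless.
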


   From \eqref{Lin-Sys-Departur-Equation}, we have successively \begin{align*}
                                            (\frac{1}{2}I+\mathcal{M}_D)(I+(\frac{1}{2}I+\mathcal{M}_D)^{-1}\mathcal{M}_N)A=-\nu\times E^I,\\
                                            (I+(\frac{1}{2}I+\mathcal{M}_D)^{-1}\mathcal{M}_N)A=-(\frac{1}{2}I+\mathcal{M}_D)^{-1}\nu\times E^I,
                                            \end{align*} and if
                                            \begin{equation}\label{Condition-if-imk=0}
                                            {\norm*{(\frac{1}{2}I+\mathcal{M}_D)^{-1}}\norm*{\mathcal{M}_N}}< 1,
                                            \end{equation}
we get $$\norm*{A}\leq \frac{\norm*{(\frac{1}{2}I+\mathcal{M}_D)^{-1}}}{1-{\norm*{(\frac{1}{2}I+\mathcal{M}_D)^{-1}}\norm*{\mathcal{M}_N}}}\norm*{\nu\times E^I},$$ where
\begin{equation*}
\begin{aligned}
\norm*{A}:=&\max_{m\in \{1,...,M\}} \norm*{a_m}_{\Ll_{t}^{2,\Div}(\partial D_m)},\\
\norm*{(\frac{1}{2}I+\mathcal{M}_D)^{-1}}:=&\max_{i\in \{1,...,m\}}\norm*{(\frac{1}{2}I+M_{ii,_D}^k)^{-1}}_\LEF{\Ll_{t}}{2,\Div}{\p D_m}{\Ll_{t}}{2,\Div}{\p D_m},\\
\norm*{\mathcal{M}_N}:=&\max_{i\in \{1,...,m\}}\sum_{\substack{ j=1\\j\neq m}}^M\norm*{M_{ij,_{D}}^k}_\LEF{\Ll_{t}}{2,\Div}{\p D_j}{\Ll_{t}}{2,\Div}{\p D_i}.
\end{aligned}
\end{equation*}

\begin{proof} We prove that under the condition \eqref{NeumannSeries-Cond}, we have 
(\ref{Estimates-Of-The-Density-a}). The properties (\ref{Decomposition-Dnsity-a})-(\ref{Decomposition-Estimate-u-L2Div}) are immediate conclusions of \Cref{Key-Decomposition} and do not rely on (\ref{NeumannSeries-Cond}).

We set $diam(B):=\max_{i\in\{1,..,m\}}diam({B}_i)$ and we suppose that $diam(B)\leq 1.$\\ 
For every $i\in\{1,...,m\}$ and $x_i=\epsilon s_i+z_i\in\dD_i,\, s_i\in \p B_i$ we write 
$\hat{a}(s_i):=a(\epsilon s_i+z_i),$ and $\breve{a}(x_i)=a(\frac{x_i-z_i}{\epsilon}),$
 and set $$M_{ii,_D}^0:=\nu\times\int_{\dD_i}\nabla\Phi_0(x_i,y)\times a(y)~ds(y).$$ 
 We have, knowing that $[\frac{1}{2}I+M_{ii,_D}^0]$ is invertible (see { Theorem 5.1 property (xix) in \cite{DMM96}}), the identity
\begin{align*}
 [\frac{1}{2}I+M_{ii,_D}^k]=[\frac{1}{2}I+M_{ii,_D}^0]\bigl(I+[\frac{1}{2}I+M_{ii,_D}^0]^{-1}[M_{ii,_D}^k-M_{ii,_D}^0]\bigr),
\end{align*} then, by inverting the operators in each side, under the condition that 
\begin{equation}\label{cdtinvmii0}
\norm*{[M_{ii,_D}^k-M_{ii,_D}^0][\frac{1}{2}I+M_{ii,_D}^0]^{-1}}\leq \norm*{[\frac{1}{2}I+M_{ii,_D}^0]^{-1}}\norm*{[M_{ii,_D}^k-M_{ii,_D}^0]}<1,
\end{equation}
 we get $[\frac{1}{2}I+M_{ii,_D}^k]^{-1}=\left(I+[\frac{1}{2}I+M_{ii,_D}^0]^{-1}[M_{ii,_D}^k-M_{ii,_D}^0]\right)^{-1}[\frac{1}{2}I+M_{ii,_D}^0]^{-1}$. As 
$$\left(I+[\frac{1}{2}I+M_{ii,_D}^0]^{-1}[M_{ii,_D}^k-M_{ii,_D}^0]\right)^{-1}=\sum_{n\geq 0}\left(-[\frac{1}{2}I+M_{ii,_D}^0]^{-1}[M_{ii,_D}^k-M_{ii,_D}^0]\right)^n$$
                we have finally \begin{equation} \label{but3}
                         \norm*{[\frac{1}{2}I+M_{ii,_D}^k]^{-1}}\leq
                         \frac{\norm*{[\frac{1}{2}I+M_{ii,_D}^0]^{-1}}}{1-\norm*{[\frac{1}{2}I+M_{ii,_D}^0]^{-1}}\norm*{[M_{ii,_D}^k-M_{ii,_D}^0]}}.
                        \end{equation}
 From here $\mathcal{L}(E):=\mathcal{L}(E,E)$ denotes the space of continuous linear operators which are defined from $E$ to $E$.                   
\bigskip

Hence the proof of (\ref{Estimates-Of-The-Density-a}), based on the condition \eqref{NeumannSeries-Cond}, is reduced to the following two estimates:

                        \begin{equation}\label{scal-norm-diag-operator}
                        \begin{aligned}
                         \norm*{[\frac{1}{2}I+M_{ii,_D}^0]^{-1}}_{\mathcal{L}\left(\Ll_{t}^{2,{_{\Div}}}({\p D_i})\right)}
                         \leq &\norm*{[\frac{1}{2}I+M_{ii,_B}^0]^{-1}}_{\mathcal{L}\left(\Ll_{t}^{2}(\p B_i)\right)}
                         \\
                              &+\norm*{[\frac{1}{2}I-(K_{ii,_B}^0)^*]^{-1}}_{\mathcal{L}\left(\Ll_{0}^{2}(\p B_i)\right)},
                        \end{aligned} 
                        \end{equation}
                        and 
                        \begin{equation}\label{scal-norm-interaction-matrix}
                        \norm*{\mathcal{M}_N}_{\mathcal{L}\left(\Ll_{t}^{2,{\Div}}(\p B_i)\right)}\leq 
                        \frac{2^6C_{_B}}{\delta}\left(\frac{\ln{(n+1)}}{\delta^2}+\frac{2k n}{\delta} +\frac{n}{2}(n+1)k^2\right)\epsilon^3.
                        \end{equation} where $n=O(m^\frac{1}{3})$, and $C_{_B}$ is a constant which depends exclusively on ${B}_i$'s. 
                        In some places of the next computations, we use the notation 
                        \begin{equation}\nonumber
                         C_0:=2^6.
                        \end{equation}
 
\bigskip

To justify (\ref{scal-norm-diag-operator}) and (\ref{scal-norm-interaction-matrix}), we need the following lemma.
 \begin{lemma}\label{lem2}
              For $x_i\in\p D_i$, $s_i\in\p B_i$, with $x_i=\epsilon s_i+z$ 
    the following scaling estimation
           \begin{equation}\label{est1}
                     \norm*{[\frac{1}{2}I+M_{ii,_D}^0]^{-1}}_{\mathcal{L}(\Ll_t^{2}(\p D_i))}
                     =
                     \norm*{[\frac{1}{2}I+M_{ii,B}^0]^{-1}}_{\mathcal{L}(\Ll_t^{2}(\p B_i))},
               \end{equation} 
                \begin{equation}\label{estiv1}
                 \norm*{[\frac{1}{2}I-K^0_{ii,_D}]^{-1}}_{\mathcal{L}(\Ll_0^{2}(\p D_i))}=
                 \norm*{[\frac{1}{2}I-K^0_{ii,_B}]^{-1}}_{\mathcal{L}(\Ll_0^{2}(\p B_i))},
                 \end{equation}

                and 
                \begin{equation}\label{estim2}
                \norm*{[M_{ii,_D}^k-M_{ii,_D}^0]}_\LEF{\Ll_\ta}{2,\Div}{\p D_i}{\Ll_\ta}{2,\Div}{\p D_i}\leq 2C_{_B}    \abs{\p B} \abs{k}^2\epsilon  \norm*{b}_{\Ll^2(\p D_i)}.
                \end{equation}
 \end{lemma}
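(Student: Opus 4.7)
For the two equalities (\ref{est1}) and (\ref{estiv1}), the argument reduces to a direct dilation computation. Under the pull-back $x=\epsilon s+z_i$, $y=\epsilon t+z_i$, the kernels of $M_{ii,_D}^0$ and $K_{ii,_D}^0$ are scale-invariant: the numerator $(x-y)$ scales by $\epsilon$, the denominator $|x-y|^3$ by $\epsilon^3$, and the surface measure $ds(y)$ by $\epsilon^2$, so their product carries no net power of $\epsilon$. I would first verify the pointwise identities
\begin{equation*}
M_{ii,_D}^0(a)(x)=M_{ii,_B}^0(\hat a)(s),\qquad K_{ii,_D}^0(\psi)(x)=K_{ii,_B}^0(\hat\psi)(s),
\end{equation*}
and then apply them to the resolvents to obtain
\begin{equation*}
\widehat{[\tfrac{1}{2}I+M_{ii,_D}^0]^{-1}(\phi)}=[\tfrac{1}{2}I+M_{ii,_B}^0]^{-1}(\hat\phi),
\end{equation*}
and likewise for $[\tfrac{1}{2}I-K_{ii,_D}^0]^{-1}$. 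Combined with the $\Ll^2$ scaling $\norm*{\phi}_{\Ll^2(\p D_i)}=\epsilon\norm*{\hat\phi}_{\Ll^2(\p B_i)}$ already established in the previous lemma, the common factor of $\epsilon$ in numerator and denominator of the defining supremum of the operator norm cancels, yielding (\ref{est1}) and (\ref{estiv1}).

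For the perturbation bound (\ref{estim2}), the central input is the Taylor expansion
\begin{equation*}
\Phi_k(x,y)-\Phi_0(x,y)=\frac{e^{ik|x-y|}-1}{4\pi|x-y|}=\frac{ik}{4\pi}+\sum_{n\geq 2}\frac{(ik)^n|x-y|^{n-1}}{4\pi\,n!}.
\end{equation*}
Taking $\nabla_x$ kills the constant term and gives
\begin{equation*}
\nabla_x(\Phi_k-\Phi_0)(x,y)=-\frac{k^2(x-y)}{8\pi|x-y|}+O(|k|^3|x-y|),
\end{equation*}
a non-singular kernel of amplitude $O(|k|^2)$ whenever $|x-y|\leq\epsilon$ and $|k|\epsilon\leq 1$. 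A Cauchy--Schwarz estimate together with $|\p D_i|=\epsilon^2|\p B_i|$ then produces the $\Ll^2$ bound for $M_{ii,_D}^k-M_{ii,_D}^0$ with the desired $|k|^2$-prefactor. For the divergence part of the $\Ll^{2,\Div}_\ta$ norm, I would use the surface identity $\Div(\nu_x\times v)=-\nu_x\cdot\curl_x v$ together with $\curl_x[\nabla_x f(x,y)\times b(y)]=b(y)\Delta_x f(x,y)-(b(y)\cdot\nabla_x)\nabla_x f(x,y)$, applied to $f=\Phi_k-\Phi_0$ for which $\Delta_x f=-k^2\Phi_k$ off the diagonal. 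The resulting integrand has a weakly singular $O(|k|^2/|x-y|)$ contribution, which is integrable on $\p D_i$ and, after pulling back to $\p B_i$, yields an integral controlled only by the Lipschitz character of $B_i$; the remaining surface-measure scaling produces the linear factor of $\epsilon$ claimed in (\ref{estim2}).

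The hard step is the divergence piece. The cancellation that made $\nabla_x(\Phi_k-\Phi_0)$ bounded improves the singularity by only one order when one differentiates again, so $\nabla_x\nabla_x(\Phi_k-\Phi_0)$ still carries a $|x-y|^{-1}$ weak singularity with amplitude $O(|k|^2)$, which cannot be bounded uniformly. The correct bookkeeping is therefore to perform the dilation $x=\epsilon s+z_i$ \emph{before} estimating, so that the weakly singular kernel sits on the fixed surface $\p B_i$ and integrates to a constant depending only on the Lipschitz character; the net factor of $\epsilon$ in (\ref{estim2}) comes solely from the surface-measure scaling. By contrast, once the dilation invariance of the kernels of $M^0$ and $K^0$ is written down, the identities (\ref{est1}) and (\ref{estiv1}) are essentially automatic.
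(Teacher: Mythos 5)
Your treatment of \eqref{est1} and \eqref{estiv1} is the same as the paper's: dilation invariance of the zero-frequency kernels, the pull-back identity for the resolvents, and cancellation of the common $\epsilon$ in the $\Ll^2$ scaling. For the $\Ll^2$ part of \eqref{estim2} you use a Taylor expansion of $\Phi_k-\Phi_0$; the paper instead writes the difference as $\frac{ik}{4\pi}\int_0^1 e^{ikl|x-y|}dl$ and differentiates under the integral. The two are equivalent in spirit, but the integral form gives the clean pointwise bound $|\nabla_x(\Phi_k-\Phi_0)|\leq\frac{|k|^2}{4\pi}$ valid for all $k$ with $\Im k\geq 0$, whereas your series truncation needs $|k|\epsilon\leq 1$ to dominate the tail; not a problem here given the overall smallness hypotheses, but worth noting. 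For the divergence part you take a genuinely different route: you compute $\Div[M^k-M^0](b)=-\nu\cdot\curl_x\int\nabla_x(\Phi_k-\Phi_0)\times b\,ds$ and then estimate the resulting operator, which has a weakly singular $O(|k|^2/|x-y|)$ kernel, directly against $\norm*{b}_{\Ll^2}$ by rescaling. The paper instead uses the exact identity $\Div[M^k_{ii,_D}-M^0_{ii,_D}](b)=-k^2\nu\cdot S^k_{ii,_D}(b)-[(K^k_{ii,_D})^*-(K^0_{ii,_D})^*](\Div b)$, which splits the term so that one piece acts on $\Div b$ through a \emph{bounded} kernel $\partial_{\nu_x}(\Phi_k-\Phi_0)$ and the other is the single-layer piece already controlled. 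Your route is correct and yields the same $\epsilon|k|^2$ factor, but it silently bypasses the integration by parts in $y$ that, via $\nabla_x\Phi=-\nabla_y\Phi$ and $\int(b\cdot\nabla_{t,y})g\,ds=-\int g\,\Div b\,ds$, is what actually converts $(b\cdot\nabla_x)\nabla_x(\Phi_k-\Phi_0)$ into $\nabla_x(\Phi_k-\Phi_0)\,\Div b$ and hence reproduces the paper's identity; the paper's version is both cleaner and more informative, since it exhibits the $(K^{k*}-K^{0*})\Div$ structure used elsewhere. Finally, a small slip: with $b$ constant in $x$ the correct vector identity is $\curl_x(\nabla_x f\times b)=-b\,\Delta_x f+(b\cdot\nabla_x)\nabla_x f$, not $+b\,\Delta_x f-(b\cdot\nabla_x)\nabla_x f$; the sign does not affect the magnitude estimate but should be fixed.
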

\begin{proof} 
For \eqref{est1} and \eqref{estiv1}, we, first have
\begin{align*}
M_{ii,_D}^0(a)(x_i)&=\nu\times\int_{\dD_i}(4\pi)^2\Phi_0^3(x_i,y)(x_i-y)\times a(y)~ds(y),\\
                &=\nu\times\int_{\partial \mathcal{B}_i}\frac{(4\pi)^2}{\epsilon^3}\Phi_0^3(s_i,t)\epsilon(s_i-t)\times a(\epsilon t+z_i)\epsilon^2ds(t),\\
                &=\nu\times\int_{\partial \mathcal{B}_i}(4\pi)^2\Phi_0^3(s_i,t)(s_i-t)\times a(\epsilon t+z_i)~ds(t)=\{M_{ii,_B}^0{\hat{a}} \}^\widecheck{\,},
\end{align*} which leads to $\widehat{[\frac{1}{2}I+M_{ii,_D}^0]^{-1}(b)}=[\frac{1}{2}I+M_{ii,_B}^0]^{-1} \widehat{b}$. 
With this in mind, considering \eqref{L2div-scaling} we get 
                          \begin{align*}
  \norm*{[\frac{1}{2}I+M_{ii,_D}^0]^{-1}}_{\mathcal{L}(\Ll_t^{2}(\p D_i))}&
  =\sup_{{(b\neq0) \in \Ll^{2}_{t}(\p D_i)}}
  \frac{\norm*{[\frac{1}{2}I+M_{ii,_D}^0]^{-1}(b)}_{\Ll^{2}_{t}(\p D_i)}}{\norm*{b}_{\Ll^{2}_{t}(\p D_i)}},\\
  &=\sup_{{(b\neq0) \in \Ll^{2}_{t}(\p D_i)}}
  \frac{\epsilon\norm*{\widehat{[\frac{1}{2}I+M_{ii,_D}^0]^{-1}(b)}}_{\Ll^{2}_{t}(\p B_i)}}{\epsilon\norm*{\widehat{b}}_{\Ll^{2}_{t}(\p B_i)}},\\
  &=\sup_{{(\widehat{b}\neq0) \in \Ll^{2}_{t}(\p B_i)}}
  \frac{\norm*{{[\frac{1}{2}I+M_{ii,_D}^0]^{-1}(\widehat{b})}\,}_{\Ll^{2}_{t}(\p B_i)}}
  {\norm*{\widehat{b}}_{\Ll^{2}_{t}(\p B_i)}}.
                          \end{align*} 
We obtain \eqref{estiv1} in the same way. For \eqref{estim2}, by Mean-value-theorem, we have 
\begin{align}\label{Mean-Val-Thm}
(\Phi_k(x,y)-\Phi_0(x,y))=\frac{1}{4\pi}\frac{e^{ik\abs{x-y}}}{\abs{x-y}}-\frac{1}{4\pi}\frac{e^{0\abs{x-y}}}{\abs{x-y}}=
\int_{0}^{1}ik\frac{1}{4\pi}{e^{ikl\abs{x-y}}}dl.
\end{align} 
Taking the gradient gives  \begin{align}\label{meanvthe}
                            \nabla_x(\Phi_k(x,y)-\Phi_0(x,y))= \frac{(ik)^2}{4\pi}\int_{0}^{1}\frac{le^{ikl\abs{x-y}}}{\abs{x-y}}(x-y)dl,
                           \end{align} thus, being $\Im k=0$, $\abs{\nabla_x(\Phi_k(x,y)-\Phi_0(x,y))\times b(y)}\leq \frac{\abs{k}^2}{4\pi}\abs{ b(y)}\leq \frac{\abs{k}^2}{4\pi}\abs{ b(y)},$ and 
                           \begin{align}\label{forphi}\abs{(\Phi_k(x,y)-\Phi_0(x,y))b}\leq \frac{\abs{k}}{4\pi}\abs{ b(y)},\end{align}
                           then it comes \begin{align*}
                                    \abs{[M_{ii,_D}^k-M_{ii,_D}^0](b)}=\abs{\int_{\dD_i}\nabla (\Phi_k-\Phi_0)(x_i,y)\times b(y)~ds(y)}
                                    &\leq \int_{\dD_i} \frac{\abs{k}^2}{4\pi}\abs{ b(y)}~ds(y),\\
                                    &\leq  \frac{\abs{k}^2\abs{\p B_i}^\frac{1}{2}\epsilon}{4\pi}
                                    \left(\int_{\dD_i}\abs{ b(y)}^2ds(y)\right)^\frac{1}{2}.
                                    \end{align*} 

Taking the norm in both sides,\begin{align}\label{Norm-M_k-M_0}
                                                                               \norm*{[M_{ii,_D}^k-M_{ii,_D}^0](b)}_{\Ll^2(\p D_i)}^2\leq
                                                                               \left(\frac{\abs{k}^2(\abs{\p B_i}\abs{\p B_i})^\frac{1}{2}\epsilon^2}{4\pi}
                                                                               \norm*{b}_{\Ll^2(\p D_i)}\right)^2.
                                                                               \end{align} 
                                                                               We have the identities, $\Div [M_{ii,_D}^k-M_{ii,_D}^0](b)(x)=-[k^2\nu \cdotp\SD{k}{i}](b)-[(\KD{k}{i}-\KD{0}{i})^*](\Div b)$. To estimate $[(\KD{k}{i}-\KD{0}{i})^*](\Div b)$, we can reproduce the same steps as we did to obtain \eqref{Norm-M_k-M_0}. We obtain
                                                                               \begin{align}\label{Fnormcompact2}
                                                                               \norm*{[(\KD{k}{i}-\KD{0}{i})^*]}_{\Ll^2_0(\p D_i)}^2
                                                                               \leq\left(\frac{\abs{k}^2(\abs{\p {B}_j}\abs{\p B_i})^\frac{1}{2}\epsilon^2}{4\pi}
                                                                               \norm*{\Div b}_{\Ll^2_0(\p D_i)}\right).
                                                                               \end{align} 
                                                                               Using \eqref{forphi}, we deduce that
                                                                               \begin{align*}
                                                                               \abs{[k^2\nu \cdotp\SD{k}{i}](b)}
                                                                               &=
                                                                               \abs{[k^2\nu \cdotp(\SD{k}{i}-\SD{0}{i})](b)+
                                                                               [k^2\nu \cdotp\SD{0}{i}](b)},\\
                                                                               &\leq \frac{\abs{k}^3\abs{\p B_i}\epsilon}{4\pi}
                                                                               \norm*{b}_{\Ll^2(\p D_i)}+
                                                                               \abs{[k^2\epsilon \nu \cdotp\SB{0}{i}](\widehat{b})},
                                                                               \end{align*} 
                                                                               
and taking the norm, we get \footnote{Notice that $\epsilon\norm*{\widehat{b}}_{\Ll^2(\p B_i)}=\norm*{b}_{\Ll^2(\p D_i)}$. } 
                                                                                                     \begin{equation} \label{knusdii}   
                                                                                                             \begin{aligned}
                                                                                                              \norm*{[k^2\nu \cdotp\SD{k}{i}]b}_{\Ll^2(\p D_i)}\hspace{-1mm}
                                                                                                              &\leq
                                                                                                                    \frac{\abs{k}^3(\abs{\p {B}_j}\abs{\p B_i})^\frac{1}{2}\epsilon^2}{4\pi}
                                                                                                                    \norm*{b}_{\Ll^2(\p D_i)}\hspace{-1mm}+ 
                                                                                                                    \abs{k}^2\epsilon^2(\abs{\p {B}_j})^\frac{1}{2} \norm*{\SB{0}{i}}\norm*{\widehat{b}}_{\Ll^2(\p B_i)},\\
                                                                                                               &\leq \frac{\abs{k}^3(\abs{\p {B}_j}\abs{\p B_i})^\frac{1}{2}\epsilon^2}{4\pi}
                                                                                                                    \norm*{b}_{\Ll^2(\p D_i)}+ 
                                                                                                                    \abs{k}^2\epsilon(\abs{\p {B}_j})^\frac{1}{2} \norm*{\SB{0}{i}}\norm*{b}_{\Ll^2(\p D_i)},\\  
                                                                                                               &\leq C_{_B}    \abs{\p B} \abs{k}^2\epsilon  \norm*{b}_{\Ll^2(\p D_i)},
                                                                                                              \end{aligned} 
                                                                                                      \end{equation} where $\abs{\p B}=\max_i\abs{\p B} $ and $C_{_B}$ is the maximum of the constants that appear 
                                                                                                      in the inequalities \eqref{Norm-M_k-M_0} and \eqref{Fnormcompact2}. Hence
                                                                                                              \begin{align}
                                                                                                               \norm*{[M_{ii,_D}^k-M_{ii,_D}^0](b)}_{\Ll^{2,\Div}_\ta(\p D_i)}\leq 
                                                                                                               2C_{_B}    \abs{\p B} \abs{k}^2\epsilon  \norm*{b}_{\Ll^2(\p D_i)}.
                                                                                                              \end{align}\end{proof}


To prove (\ref{scal-norm-diag-operator}), let us recall that we have
\begin{align}\label{Norm-Mii-Diag-oneachB}
 \norm*{[\frac{1}{2}I+M_{ii,_D}^0]^{-1}}&=
 \sup_{b\neq0}\left(\frac{\norm*{[\frac{1}{2}I+M_{ii,_D}^0]^{-1}b}_{\Ll^2{(\p D_i)}}^2+
 \norm*{\Div[\frac{1}{2}I+M_{ii,_D}^0]^{-1}b}_{\Ll^2{(\p D_i})}^2 }{\norm*{b}_{\Ll^2{(\p D_i)}}^2+\norm*{\Div b}^2_{\Ll^2{(\p D_i)}}}\right)^\frac{1}{2}
 \end{align} 
 Considering the fact that\footnote{It suffices to write, for $b,c\in\Ll^{2,\Div}$ such that $b=[\frac{1}{2}I+M_{ii,_D}^0]^{-1}c,$ 
                                                $\Div c=\Div [\frac{1}{2}I+M_{ii,_D}^0]b=[\frac{1}{2}I-(K_{ii,_D}^0)^*]\Div b$ and inverting 
                                                $[\frac{1}{2}I-(K_{ii,_D}^0)^*]$ to get $\Div b=[\frac{1}{2}I-(K_{ii,_D}^0)^*]^{-1}\Div c$.}
$\Div [\frac{1}{2}I+M_{ii,_D}^0]^{-1}= [\frac{1}{2}I-(K_{ii,_D}^0)^*]^{-1}\Div,$ then \eqref{Norm-Mii-Diag-oneachB} gives 
                                                  \begin{align*}
                                                  \norm*{[\frac{1}{2}I+M_{ii,_D}^0]^{-1}}=&
                                                  \sup_{b\neq0}\left(\frac{\norm*{[\frac{1}{2}I+M_{ii,_D}^0]^{-1}b}_{\Ll^2{(\p D_i)}}^2+
                                                  \norm*{[\frac{1}{2}I-(K_{ii,_D}^0)^*]^{-1}\Div b}_{\Ll^2{(\p D_i})}^2 }{\norm*{b}_{\Ll^2{(\p D_i)}}^2+
                                                  \norm*{\Div b}^2_{\Ll^2{(\p D_i)}}}\right)^\frac{1}{2},\\
                                                  \leq&\sup_{b\neq0}\biggl(\frac{\norm*{[\frac{1}{2}I+M_{ii,_D}^0]^{-1}b}_{\Ll^2{(\p D_i)}}^2}
                                                  {\norm*{b}_{\Ll^2{(\p D_i)}}^2+
                                                  \norm*{\Div b}^2_{\Ll^2{(\p D_i)}}}\biggr)^\frac{1}{2}\\
                                                  &+\sup_{b\neq0}\biggl(\frac{\norm*{[\frac{1}{2}I-(K_{ii,_D}^0)^*]^{-1}\Div b}_{\Ll^2{(\p D_i})}^2 }
                                                  {\norm*{b}_{\Ll^2{(\p D_i)}}^2+\norm*{\Div b}^2_{\Ll^2{(\p D_i)}}}\biggr)^\frac{1}{2},
                                                  \end{align*} 
                                                  where the last inequality is due to the fact that 
                                                  $\alpha^2+\beta^2\leq \alpha^2+\beta^2+2\alpha\beta$ for $\alpha,\beta\geq0$.
                                                  
Finally, as $\alpha^2\leq \beta^2+\alpha^2$, we get \footnote{With $\Ll^2$-norm.}  
                                    \begin{align*}
                                    \norm*{[\frac{1}{2}I+M_{ii,_D}^0]^{-1}}\leq&
                                    \sup_{b\neq0}\left(\frac{\norm*{[\frac{1}{2}I+M_{ii,_D}^0]^{-1}b}^2}
                                    {\norm*{b}^2}\right)^\frac{1}{2}+
                                    \sup_{b\neq0}\left(\frac{\norm*{[\frac{1}{2}I-(K_{ii,_D}^0)^*]^{-1}\Div b}^2 }
                                    {\norm*{\Div b}^2}\right)^\frac{1}{2}.
                                    \end{align*}
                                    
To prove (\ref{scal-norm-interaction-matrix}), we will need the following lemma
\begin{lemma}\label{Inter-Lemma}
 For every $i, j\in\{1,...,m\}$, under the condition that $\epsilon\leq \delta<1$ we have 
 \begin{equation}
   \norm*{M_{ij,_{D}}^k}_\LEF{\Ll_{t}}{2,Div}{\p B}{\Ll_{t}}{2,Div}{\p B}\leq 
   \frac{4(\abs{\p B_i}\abs{\p {B}_j})^\frac{1}{2}}{\pi\delta_{i,j}}
   \bigl(\frac{1}{\delta_{i,j}}+\abs{k}\bigr)^2 \epsilon^3.
 \end{equation}

\end{lemma}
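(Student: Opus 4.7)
My plan is to bound the operator norm of $M_{ij,_{D}}^k$ from $\Ll^{2,\Div}_\ta(\p D_j)$ into $\Ll^{2,\Div}_\ta(\p D_i)$ by exploiting (i) the smoothness of the kernel off the diagonal, since $x\in\p D_i$, $y\in\p D_j$ with $i\ne j$ forces $\abs{x-y}\geq\delta_{i,j}$, and (ii) the Helmholtz-type decomposition of \Cref{Key-Decomposition}, which supplies the extra power of $\epsilon$ that naive Cauchy--Schwarz misses. The first step is to rewrite $M_{ij,_{D}}^k a(x)=\nu_x\times\curl_x\int_{\p D_j}\Phi_k(x,y)a(y)\,ds(y)$; using $\Div(\nu\times u)=-\nu\cdot\curl u$, $\curl\curl=\nabla\Div-\Delta$, and $-\Delta_x\Phi_k(x,y)=k^2\Phi_k(x,y)$ for $x\neq y$, this gives
\begin{equation*}
\Div M_{ij,_{D}}^k a(x) \;=\; -k^2\,\nu_x\cdot S_{ij,_{D}}^k a(x) \;-\; \nu_x\cdot\nabla_x S_{ij,_{D}}^k(\Div a)(x).
\end{equation*}
The kernel bounds $\abs{\Phi_k(x,y)}\leq(4\pi\delta_{i,j})^{-1}$, $\abs{\nabla_x\Phi_k(x,y)}\leq(4\pi\delta_{i,j})^{-1}(\abs{k}+\delta_{i,j}^{-1})$, and $\abs{\nabla_x\nabla_y\Phi_k(x,y)}\leq(4\pi\delta_{i,j})^{-1}(\abs{k}+\delta_{i,j}^{-1})^2$ on $\p D_i\times\p D_j$ then follow from the Leibniz rule applied to $\Phi_k$.

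Applying Cauchy--Schwarz in $y$ over $\p D_j$ and in $x$ over $\p D_i$, combined with the scaling $\abs{\p D_\ell}^{1/2}=\epsilon\abs{\p B_\ell}^{1/2}$, produces only an $\epsilon^2$ prefactor. To gain the third power of $\epsilon$ I decompose $a=a^{[1]}+\nu\times\nabla u$ using \Cref{Key-Decomposition}, exploiting the smallness bounds $\norm*{a^{[1]}}_{\Ll^2(\p D_j)}\leq C\epsilon\norm*{a}_{\Ll^{2,\Div}_\ta}$ and $\norm*{u}_{\Ll^2(\p D_j)}\leq C\epsilon\norm*{a}_{\Ll^{2,\Div}_\ta}$ from \eqref{Div-part-L2estimates} and \eqref{Grad-part-v-L2}. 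The piece $M_{ij,_{D}}^k a^{[1]}$ is handled directly: Cauchy--Schwarz together with the extra $\epsilon$ from $\norm*{a^{[1]}}_{\Ll^2}$ yields the $\epsilon^3$ contribution. Since $\Div(\nu\times\nabla u)=0$ one has $\Div a=\Div a^{[1]}$, so the term $\nu_x\cdot\nabla_x S_{ij,_{D}}^k(\Div a)$ can be treated by Taylor-expanding $\nabla_x\Phi_k(x,y)$ around $y=z_j$ and using $\int_{\p D_j}\Div a\,ds=0$ from \eqref{Divrel-average-zero}: the leading Taylor coefficient vanishes and the Hessian-controlled remainder contributes the extra $\epsilon$; the $k^2 \nu_x\cdot S_{ij,_{D}}^k a$ term is treated analogously, using $\int_{\p D_j}a\,ds=\int_{\p D_j}(y-z_j)\Div a(y)\,ds(y)$ from \eqref{Divrel2} to extract an $\epsilon$.

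The delicate step is the contribution of $\nu\times\nabla u$, which carries no small $\Ll^2$-norm of its own. The remedy is tangential integration by parts on the closed surface $\p D_j$: for any smooth $F$,
\begin{equation*}
\int_{\p D_j} F(y)\cdot(\nu_y\times\nabla u(y))\,ds(y) \;=\; -\int_{\p D_j} u(y)\,\nu_y\cdot\curl F(y)\,ds(y),
\end{equation*}
applied componentwise with $F(y)=\Phi_k(x,y)e_\ell$ for a Cartesian basis $(e_\ell)$ and $x\in\p D_i$ fixed. This transfers the surface gradient from $u$ onto the kernel, converting the integrand into one controlled by $\nabla_x\nabla_y\Phi_k$ (of order $(\abs{k}+\delta_{i,j}^{-1})^2/\delta_{i,j}$) paired against the small $\norm*{u}_{\Ll^2(\p D_j)}\leq C\epsilon\norm*{a}_{\Ll^{2,\Div}_\ta}$; combined with the $\abs{\p D_i}^{1/2}=\epsilon\abs{\p B_i}^{1/2}$ from the outer $\Ll^2$-norm, this produces the missing $\epsilon$. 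Collecting all contributions and using $\epsilon\leq\delta_{i,j}<1$ (so that $\abs{k}+\delta_{i,j}^{-1}\geq 1$), the three scales $\abs{k}+\delta_{i,j}^{-1}$, $k^2$, and $(\abs{k}+\delta_{i,j}^{-1})^2$ can all be absorbed into the common factor $(\abs{k}+\delta_{i,j}^{-1})^2$, delivering the stated bound. The main obstacle throughout is precisely this divergence-free piece $\nu\times\nabla u$: only tangential integration by parts, joined with the decomposition estimate on $u$, restores the third factor of $\epsilon$.
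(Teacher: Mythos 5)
Your proof is correct in spirit and lands on the right powers of $\epsilon$ and $\delta_{i,j}$, but it takes a genuinely different — and heavier — route than the paper. The paper's proof does \emph{not} decompose the density $a$ at all. Instead it applies the moment identity \eqref{Div-Identity-transform} (a single tangential integration by parts exploiting that $\Phi_k a$ is tangential),
\begin{equation*}
\int_{\p D_j}\Phi_k(x,y)a(y)\,ds(y)=-\int_{\p D_j}(y-z_j)\,\Phi_k(x,y)\,\Div a(y)\,ds(y)-\int_{\p D_j}(y-z_j)\,\nabla_y\Phi_k(x,y)\cdot a(y)\,ds(y),
\end{equation*}
which inserts the factor $(y-z_j)=O(\epsilon)$ directly and controls both pieces by $\norm*{a}_{\Ll^2}$ and $\norm*{\Div a}_{\Ll^2}$. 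Applying $\nu\times\nabla_x\times$ and the kernel bound $\abs{\nabla_x\nabla_y\Phi_k}\leq\frac{3}{4\pi\delta_{i,j}}(\delta_{i,j}^{-1}+\abs{k})^2$ then gives the $\Ll^2$-part directly, and the $\Div$-part is treated with the same identity plus the mean-value cancellation based on $\int_{\p D_j}\Div a\,ds=0$. In contrast, you invoke the Helmholtz decomposition of \Cref{Key-Decomposition} to split $a=a^{[1]}+\nu\times\nabla u$ and then integrate by parts only against $u$; that works, but (i) it imports the operator-norm constants $C_1,\dots,C_4$ from \Cref{Key-Decomposition}, so you cannot recover the paper's explicit, shape-only constant $\frac{4(\abs{\p B_i}\abs{\p B_j})^{1/2}}{\pi}$, only a bound with a Lipschitz-character-dependent constant; and (ii) it is logically heavier, since \Cref{Key-Decomposition} itself relies on invertibility of the scalar surface potentials \eqref{isomorphism-decomposition}, which the paper's argument for this lemma avoids entirely. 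Your treatment of the surface-divergence part ($\Div M^k_{ij}a = -K^{k*}_{ij}\Div a - k^2\nu\cdot S^k_{ij}a$ and the Taylor trick for $K^{k*}_{ij}$) coincides with the paper's. One small caution about the claimed absorption of scales: when you integrate $\nu\times\nabla u$ by parts in the $k^2\nu\cdot S^k_{ij}$ contribution, you pick up $k^2\cdot\abs{\nabla_y\Phi_k}\sim k^2\delta_{i,j}^{-1}(\delta_{i,j}^{-1}+\abs{k})$, and $k^2(\delta_{i,j}^{-1}+\abs{k})$ is not in general dominated by $(\delta_{i,j}^{-1}+\abs{k})^2$; the safer route here is the one you also mention — keep $a$ in this term and use \eqref{Divrel2} to extract $\int_{\p D_j}a=\int_{\p D_j}(y-z_j)\Div a$, which gives $k^2\delta_{i,j}^{-1}$ and is cleanly absorbed.
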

\begin{proof} For $i\neq j$, $x_i\in \p D_i$
we have, recalling \eqref{Div-Identity-transform}, that 
\begin{equation}\label{Mij-trasfomation}
        \begin{aligned}
         \int_{\p D_j} \Phi_k(x_i,y) b(y)~ds(y)=&\int_{\p D_j}(\nabla_y(y-z_j)) (\Phi_k(x_i,y) b(y)~ds(y),\\
                                              =&\int_{\p D_j}(y-z_j) \Div\left(\Phi_k(x_i,y) b(y)\right)~ds(y),\\
                                              =&-\int_{\p D_j}(y-z_j) \Phi_k(x_i,y)\Div b(y)~ds(y)\\
                                               &-\int_{\p D_j}(y-z_j) \nabla_y\Phi_k(x_i,y)\cdot b(y)~ds(y).
        \end{aligned} 
 \end{equation} 
 Being $-M_{ij,_{D}}^k(b)= -\nu \times\nabla_x\times\int_{\p D_j} \Phi_k(x_i,y)\times b(y)~ds(y)$, it comes from \eqref{Mij-trasfomation}
\begin{equation}\label{Transformation}
           \begin{aligned}
           -M_{ij,_{D}}^k(b)=&\nu \times\nabla_x\times\int_{\p D_j}(y-z_j) \Phi_k(x_i,y)\Div b(y)~ds(y)~\\
                        &+\nu \times\nabla_x\times\int_{\p D_j}(y-z_j) \nabla_y\Phi_k(x_i,y)\cdot b(y))~ds(y),\\
                     =&\nu \times\int_{\p D_j}(y-z_j)\times \nabla_x\Phi_k(x_i,y)\Div b(y)~ds(y)~\\
                        &+\nu \times\int_{\p D_j}(y-z_j)\times \nabla_x \nabla_y \Phi_k(x_i,y)\cdot b(y))~ds(y).   
           \end{aligned} 
\end{equation}
As \footnote{ Notice that $\left((x-y)(x-y)^T\right)b=\left(b\cdotp(x-y)\right)(x-y)$ and $(x-y)\times (x-y)=0$.} 
 \begin{align*}
 -\nabla_x\nabla_y\Phi_k(x,y)=(4\pi)^4\Phi_k(x,y)&\Phi_0^2\left\{(\Phi_0-ik)^2+(\Phi_0-ik)\Phi_0+\Phi_0^2\right\}(x,y)\left((x-y)(x-y)^T\right)\\
 &+(4\pi)^2\Phi_k\Phi_0(\Phi_0-ik)(x,y) ~I,
\end{align*} where $(x-y)^T$ stands for the transpose vector of $(x-y)$, we  get\footnote{As $e^{-\Im k~\delta_{i,j}}\leq 1.$}          
          \begin{align}\label{doublgrad}
           \abs{\nabla_x\nabla_y\Phi_k(x,y)}\leq \frac{3}{4\pi}\frac{1}{\delta_{i,j}}\bigl(\frac{1}{\delta_{i,j}}+\abs{k}\bigr)^2,
           \end{align} and \begin{align}
                             \abs{\nabla_y\Phi_k(x,y)}\leq \frac{1}{4\pi}\frac{1}{\delta_{i,j}}\left(\frac{1}{\delta_{i,j}}
                             +\abs{k}\right).
                             \end{align}
Hence, in view of \eqref{Transformation}, using Holder's inequality,
           \begin{align*}
           \abs{M_{ij,_{D}}^k(b)(x_i)}\leq& \abs{\nabla_x\Phi_k(x_i,y)} \norm*{(y-z_j)}_{\Ll^2(\p D_i)}\norm*{\Div b}_{\Ll^2(\p D_i)}\\
                        &+\abs{\nabla_y\nabla_x \Phi_k(x_i,y)}\norm*{(y-z_j)}_{\Ll^2(\p D_i)}  \norm*{b(y)}_{\Ll^2(\p D_i)},\\
                                \leq& \frac{2~\abs{\p {B}_j}^\frac{1}{2}}{\pi \delta_{i,j}}
                                \bigl(\frac{1}{\delta_{i,j}}+\abs{k}\bigr)^2\epsilon^2 \norm*{b(y)}_{\Ll^{2,\Div}_\ta(\p D_i)},
           \end{align*} from which it follows that               
                                    \begin{equation}\label{First-part-EstimationMij}
                                     \norm*{M_{ij,_{D}}^k(b)}_{\Ll^2(\p D_i)}
                                     \leq \frac{2(\abs{\p B_i}\abs{\p {B}_j})^\frac{1}{2}}{\pi\delta_{i,j}}
                                     \bigl(\frac{1}{\delta_{i,j}}+\abs{k}\bigr)^2 \epsilon^3
                                     \norm*{b}_{\Ll^{2,\Div}_\ta(\p D_i)}.
                                     \end{equation} 
Taking the surface divergence of $M_{ij,_{D}}^k(a)$ we have
                                            \begin{align*}
                                            \Div M_{ij,_{D}}^k(b)(x_i)=-\int_{\p D_j}\frac{\p \Phi_k(x_i,y)}{\p \nu_x}\Div b(y)~ds(y)-
                                            k^2\nu\cdot\int_{\p D_j}\Phi_k(x_i,y)b(y)~ds(y)
                                            \end{align*} and using \eqref{Mij-trasfomation} 
                                            \begin{align*}
                                            \abs{\Div M_{ij,_{D}}^k(b)(x_i)}&\leq\abs{\int_{\p D_j}\frac{\p \Phi_k(x_i,y)}{\p \nu_x}\Div b(y)~ds(y)}+
                                            \abs{\int_{\p D_j}(y-z_j)\nabla\Phi_k(x_i,y)\cdotp b(y)~ds(y)}\\
                                            &\hspace{3cm}+\abs{\int_{\p D_j}(y-z_j)\Phi_k(x_i,y)\Div b(y)~ds(y)}.
                                            \end{align*}
Using the Mean-value-theorem, we get\footnote{Notice that $$\int_{\p D_i}\nabla \Phi_k(x_i,z_j)\Div b~ds(y)
                                                          =\nabla \Phi_k(x_i,z_j)\int_{\p D_i}\Div b~ds(y)=0$$.} successively
                                 \begin{align*}
                                 \abs{\int_{\p D_j}\frac{\p \Phi_k(x_i,y)}{\p \nu_x}\Div b(y)~ds(y)}&=
                                 \abs{\int_{\p D_j}\frac{\p \left(\Phi_k(x_i,y)-\Phi_k(x_i,z_j)\right)}{\p \nu_x}\Div b(y)~ds(y)},\\
                                 &\leq 
                                 \frac{\abs{\p {B}_j}^\frac{1}{2}}{\pi\delta_{i,j}}\left(\frac{1}{\delta_{i,j}}+
                                 \abs{k}\right)^2 \epsilon^2\norm*{\Div b}_{\Ll^2(\p D_j)};
                                 \end{align*} and, 
                                 \begin{align*}
                                  \abs{\int_{\p D_j}(y-z_j)\nabla\Phi_k(x_i,y)\cdotp b(y)~ds(y)}&\leq \epsilon^2\abs{\p {B}_j}^\frac{1}{2}
                                                           \frac{1}{4\pi\delta_{i,j}}(\frac{1}{\delta_{i,j}}+\abs{k})\norm*{b}_{\Ll^2(\p D_j)},\\
                                  \abs{\int_{\p D_j}(y-z_j)\Phi_k(x_i,y)\Div b(y)~ds(y)}&\leq \epsilon^2\abs{\p {B}_j}^\frac{1}{2}
                                  \frac{1}{4\pi\delta_{i,j}}\norm*{\Div b}_{\Ll^2(\p D_j)}.
                                 \end{align*} 
The sum of the three last inequalities, gives us the estimates
                                                    \begin{align}\label{Div-Mij}
                                                    \abs{\Div M_{ij,_{D}}^k(b)(x_i)}&\leq 
                                                    \frac{3\abs{\p {B}_j}^\frac{1}{2}}{\pi\delta_{i,j}}\left(\frac{1}{\delta_{i,j}}+
                                                    \abs{k}\right)^2 \epsilon^2\left(\norm*{b}_{\Ll^{2,\Div}_\ta(\p D_i)}+\norm*{\Div b}_{\Ll^2(\p D_j)}\right),
                                                    \end{align} 
and then
                                                    \begin{align}\label{frstpartestim1}
                                                    \norm*{\Div M_{ij,_{D}}^k(b)}_{\Ll^2(\p D_i)}&\leq 
                                                    \frac{3(\abs{\p B_i}\abs{\p {B}_j})^\frac{1}{2}}{\pi\delta_{i,j}}\left(\frac{1}{\delta_{i,j}}+
                                                    \abs{k}\right)^2 \epsilon^3\norm*{b}_{\Ll^{2,\Div}_\ta(\p D_i)}.
                                                    \end{align} 
We conclude by combining \eqref{First-part-EstimationMij} and \eqref{frstpartestim1}.
                                                    \end{proof}

Based on \Cref{Inter-Lemma}, let us show the proof of (\ref{scal-norm-interaction-matrix}).   
 Draw $l$ spheres $(\Sp_{l{\delta}}(z_i))_{\{l=1,2,...,n\}}$ centered at $z_i$ with radius $l\delta$ , where $n$ will be determined later, let 
 $R_l=\Sp_{l+1}-\Sp_l$, and $R_0=\Sp_1$ the volume of each $R_l$ is given by \begin{align*}
                                                                             Vol(R_l)&=\frac{4\pi \left((l+1)\delta\right)^3}{3}-\frac{4\pi (l\delta)^3}{3}=\frac{4\pi (3l^2+3l+1)\delta^3}{3}.                                                                             
                                                                             \end{align*} 
  For $j\neq i$, we consider the spheres  $\Sp_{\frac{1}{2}\delta}(z_j)$. We claim that for $j_2\neq j_1,$ $int(\Sp_{\frac{1}{2}\delta}(z_{j_1}))\cap int(\Sp_{\frac{1}{2}\delta}(z_{j_2}))=\emptyset$, 
  where $int(\Sp_{\frac{1}{2}\delta})$ stands for the interior of $\Sp_{\frac{1}{2}\delta}$. Indeed, if $t$ was in the intersection, we would have $d_{j_1,j_2}
  =\min_{\{x\in D_{j_1}, y\in D_{j_2}\}}  {{d(x,y)}}\leq d(z_{j_1},z_{j_2})\leq d(z_{j_1},t)+d(t,z_{j_2})<\frac{\delta}{2}+\frac{\delta}{2}=\delta,$ which contradicts the fact that $\delta$ is the minimum distance.
Hence, each domain $D_{j_1}$ located in $z_{j_1}$ occupying the volume of $\Sp_{\frac{1}{2}\delta}(z_{j_2})$  which is not shared with 
  another $D_{j_i}$. Then the maximum number of $D_j$'s that could occupy each $R_l$, for $l=1,...,m$, corresponds to the maximum number of spheres 
  $\Sp_{\frac{1}{2}\delta}(z_{j})$ that could fit in the closure of $R_l$. Considering the case where $z_j$ is on $\p R_l$, only the half of the ball is in $R_l$, see the figure.\medskip

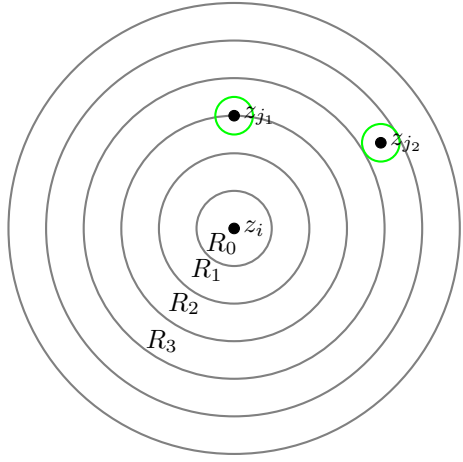
\begin{figure}\label{fig2}
\begin{center}
\begin{tikzpicture}
\draw[gray, thick] (2,0) circle (3);
\draw[gray, thick] (2,0) circle (2.5);
\draw[gray, thick] (2,0) circle (2);
\draw[gray, thick] (2,0) circle (1.5);
\draw[gray, thick] (2,0) circle (1);
\draw[gray, thick] (2,0) circle (0.5);
\draw[green, thick](2,1.5) circle (0.25);
\draw[green, thick](3.95,1.14) circle (0.25);
\filldraw[black] (2,0) circle (2pt) node[anchor=west] {$z_i$};
\filldraw[black] (2,1.5) circle (2pt) node[anchor=west] {$z_{j_1}$};
\filldraw[black] (3.95,1.14) circle (2pt) node[anchor=west] {$z_{j_2}$};
\filldraw[black] (1.5,-0.2) node[anchor=west] {$R_0$};
\filldraw[black] (1.3,-0.55) node[anchor=west] {$R_1$};
\filldraw[black] (1,-1) node[anchor=west] {$R_2$};
\filldraw[black] (0.7,-1.5) node[anchor=west] {$R_3$};
\end{tikzpicture}
\end{center}
\caption{Possible configuration for the scatterers.}
\end{figure}
If $m_l$ corresponds to the maximum amount of locations $z_j$ that are in the closure of $R_l$ then $m_l\leq {Vol(R_l)}/(\frac{Vol(\Sp_{\frac{1}{2}\delta})}{2})=2^4(3l^2+3l+1)\leq 2^6 l^2,$ 
whenever $l\geq 4$, then, being $\sum_{l=1}^n m_l=m\leq \sum_{l=1}^n 2^4(3l^2+3l+1)=2^4 n(n^2+3n+3),$ we get $n=O(m^\frac{1}{3}).$
Now, considering \cref{Inter-Lemma}, we have \begin{equation*}
                                      \norm*{M_{ij,_{D}}^k}_\LEF{\Ll_{t}}{2,Div}{\p B}{\Ll_{t}}{2,Div}{\p B}
                                      \leq \frac{4(\abs{\p B_i}\abs{\p {B}_j})^\frac{1}{2}}{\pi\delta_{i,j}} \bigl(\frac{1}{\delta_{i,j}}+\abs{k}\bigr)^2 \epsilon^3
                                                      \leq \frac{C_{i,j}}{\delta_{i,j}} \bigl(\frac{1}{\delta_{i,j}}+\abs{k}\bigr)^2\epsilon^3,
                                                      \end{equation*}hence, for $C_{_B}=\max_{i,j\in\{1,...,m\}}C_{i,j}$, we have \begin{equation}
                                                      \sum_{(j\neq i)\geq1}^m\norm*{M_{ij,_{D}}^k}_\LEF{\Ll_{t}}{2,Div}{\p B}{\Ll_{t}}{2,Div}{\p B}\leq
                                                      C_{_B}\sum_{l=1}^n\sum_{z_j\in R_l} \frac{1}{\delta_{i,j}}\bigl(\frac{1}{\delta_{i,j}}+\abs{k}\bigr)^2 \epsilon^3.
                                                      \end{equation} 
Since for every $z_j\in \bar{R_l}$,  $l\delta\leq \delta_{i,j}$
            we get\begin{equation}\label{Exhaustiv-counting}\begin{aligned}
                   \sum_{(j\neq i)\geq1}^m\norm*{M_{ij,_{D}}^k}_\LEF{\Ll_{t}}{2,Div}{\p B}{\Ll_{t}}{2,Div}{\p B}&\leq C_{_B}\sum_{l=1}^n\sum_{z_j\in R_l} \frac{1}{l\delta}\left(\frac{1}{l\delta}+k\right)^2 \epsilon^3,\\
                   &\leq C_{_B}\sum_{l=1}^n 2^4(3l^2+3l+1) \frac{1}{l\delta}\left(\frac{1}{l\delta}+k\right)^2 \epsilon^3,\\
                   &\leq 2^6C_{_B}\sum_{l=1}^n  l^2 \frac{1}{l\delta}\left(\frac{1}{l\delta}+k\right)^2 \epsilon^3,\\
                   &\leq \frac{2^6C_{_B}}{\delta}\left(\frac{\ln{(n+1)}}{\delta^2}+\frac{2k n}{\delta} +\frac{n}{2}(n+1)k^2\right)\epsilon^3.
                  \end{aligned}\end{equation}
 
\end{proof}

Considering \eqref{est1} and \eqref{estim2}, the condition \eqref{cdtinvmii0} is acquired if
                                        \begin{equation}\label{condition1}
                                        \frac{\abs{\p B_i}e^{C_{_B}}}{4\pi\, diam(B)^2}(k\epsilon)^2
                                        \norm*{[\frac{1}{2}I+M_{ii,_B}^0]^{-1}}_\LEF{\Ll_{t}}{2}{\p B_i}{\Ll_{t}}{2}{\p B_i}
                                         <1,
                                         \end{equation}
If we set $C_{B_i}=\frac{\abs{\p B_i}}{4\pi\, diam(B)^2}\norm*{[\frac{1}{2}I+M_{ii,_B}^0]^{-1}}_\LEF{\Ll_{t}}{2}{\p B_i}{\Ll_{t}}{2}{\p B_i}$ we get from \eqref{but3} 
                                         \begin{equation}
                                          \norm*{[\frac{1}{2}I+M_{ii,_D}^k]^{-1}}\leq C_{i,\epsilon}:=
 \frac{\norm*{[\frac{1}{2}I+M_{ii,_B}^0]^{-1}}_\LEF{\Ll_{t}}{2}{\p B_i}{\Ll_{t}}{2}{\p B_i}}{1-C_{B_i} \left(e^{k\epsilon}-1\right)k\epsilon},
                                         \end{equation} so we find $\norm*{(\frac{1}{2}I+\mathcal{M}_D)^{-1}}\leq C_\epsilon:= \max_{i\in\{1,...,m\}} C_{i,\epsilon},$ under the condition \eqref{condition1}.
 

 \section{Fields approximation and the linear algebraic systems}\label{Section-first-approximation}
 Based on the representation (\ref{cas1}), the expression of the  far field pattern is given by
                                    \begin{align*}
                                    E^\infty(\tau)&=\frac{ik}{4\pi}\tau\times\int_{\p D} a(y)e^{-ik\tau.y}ds(y),
                                    \end{align*} where $\tau=(x/\abs{x}) \in\mathbb{S}^2$. We put                                                                          
  \begin{align*}
  \mathcal{A}_i:=\int_{\p D_i}a_i^{[1]}ds,~\,~\, \mathcal{B}_i:=\int_{\p D_i}\nu u_ids,
  \end{align*} where $a_i^{[1]}$ and $u_i$ are defined in \Cref{Decomposition-density-estimate}, with the notation \eqref{Varepsilon-Generic-Function} 
  repeating the same calculations as in \eqref{Exhaustiv-counting}, we derive the estimate  
\begin{equation} \label{Error-Exhaustiv-Expression} \begin{aligned}
    \sum_{\underset{j\neq i}{j\geq1}}^{m}\frac{1}{\delta_{i,j}}\Bigl((\frac{1}{\delta_{i,j}}&+\abs{k})^3+(\frac{1}{\delta_{i,j}}+\abs{k})^2+(\frac{1}{\delta_{i,j}}+\abs{k})\Bigr)\\
       &=O\Bigl(\frac{1}{\delta^4}+\frac{(\abs{k}+1)~\ln(m^\frac{1}{3})}{\delta^3}+\frac{(\abs{k}+1)^2~m^\frac{1}{3}}{\delta^2}+\frac{(\abs{k}+1)^3~m^\frac{2}{3}}{\delta}\Bigr)
       &=:\Varepsilon_{k,\delta,m}.
  \end{aligned} \end{equation}
\begin{proposition}\label{Field-Approximation-First-Step}
        For $\Im k=0$, the far field pattern can be approximated by
                             \begin{align}\label{Far-Field-Approximation}
                              E^\infty(\tau)=\frac{ik}{4\pi}
                             \sum_{i=1}^m e^{-ik\tau.z_i}&\tau\times 
                             \left\{\mathcal{A}_i-ik\tau\times \mathcal{B}_i\right\} +O\left((\abs{k}^3+\abs{k}^2)~ m\epsilon^4\right).
                             \end{align}
         For $\Im k\geq 0$, and for every $x \in \R^3 \setminus\cup_{i=1}^m \overline{D_i},$ such that $d_x:=d(x,\cup_{i=1}^m \overline{D_i})\geq \delta $,
       the scattered electric field has the following expansion,\begin{equation}\label{Near-Field-Approximation}
                                                                 \begin{aligned}
                                                                 E^\s(x)=&\sum_{i=1}^m  \Bigl(\nabla_x\Phi_k(x,z_i)\times\mathcal{A}_i+  \nabla_y\times \nabla_x\times (\Phi_k(x,z_i) \mathcal{B}_i)\Bigr)
                                                                          +O\bigl(\frac{\epsilon^4}{\delta^4}+\Varepsilon_{k,\delta,m}~\epsilon^4 \bigr).
                                                                 \end{aligned}
                                                                 \end{equation} 
The elements $(\mathcal{A}_i)_{i=1}^m$ and $(\mathcal{B}_i)_{i=1}^m$ are solutions of the following linear algebraic system 
                               \begin{equation}\label{SYS1}
                                            \begin{aligned}
                                             \mathcal{A}_i
                                               =-\Polt &\sum_{(j\neq i)\geq1}^m \left(\Pi_k(z_i,z_j)\mathcal{A}_j-k^2\nabla\Phi_k(z_i,z_j)\times\mathcal{B}_j\right) -\Polt\curl E^\n(z_i)\\
                                               &+O\biggl(\frac{\epsilon^7}{\delta^4}+ \abs{k}\Varepsilon_{k,\delta,m}  \epsilon^7+\abs{k}^2\epsilon^4\biggr).
                                             \end{aligned}
                                 \end{equation} 
                                  \begin{equation} \label{SYS2}       
                                        \begin{aligned}
                                       \mathcal{B}_i=\Vmt&\sum_{(j\neq i)\geq1}^m\left(~ -\nabla_x\Phi_k(z_i,z_j)\times \mathcal{A}_j~+
                                      ~ \Pi_k(z_i,z_j)\mathcal{B}_j\right)-\Vmt E^\n(z_i) \\
                                  &+O\biggl(\frac{\epsilon^7}{\delta^4}+ \Varepsilon_{k,\delta,m}  \epsilon^7+(1+\abs{k})\epsilon^4\biggr).
                                        \end{aligned}                                                                      
                                \end{equation}  
\end{proposition}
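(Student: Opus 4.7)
The plan is to work directly from the representation $E^{\s}(x) = \curl_x \int_{\partial D} \Phi_k(x,y) a(y) ds(y)$, to insert on each $\partial D_i$ the Helmholtz-type decomposition $a_i = a_i^{[1]} + \nu \times \nabla u_i$ supplied by \Cref{Decomposition-density-estimate}, to Taylor-expand the kernels $e^{-ik\tau\cdot y}$ (for the far field) and $\nabla_x \Phi_k(x,y)$ (for the near field) around the centres $z_i$, and then to recognise the Foldy-Lax sources $\mathcal{A}_i = \int_{\partial D_i} a_i^{[1]} ds$ and $\mathcal{B}_i = \int_{\partial D_i} \nu u_i ds$ in the leading term. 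The quantitative bounds will follow by combining the a priori estimate $\|a\|_{\Ll^{2,\Div}_{t}} = O(\epsilon)$ of \Cref{Decomposition-density-estimate} with the fine bounds \eqref{Decomposition-Estimate-a_1-L2}--\eqref{Decomposition-Estimate-u-L2Div} for the two pieces of the decomposition, together with the packing estimate \eqref{Exhaustiv-counting} to aggregate the interactions uniformly in $m$.

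For the far-field identity \eqref{Far-Field-Approximation} I start from the standard formula $E^\infty(\tau) = \frac{ik}{4\pi}\tau \times \sum_i \int_{\partial D_i} a_i(y) e^{-ik\tau\cdot y} ds(y)$. The $a_i^{[1]}$-contribution gives $e^{-ik\tau\cdot z_i} \mathcal{A}_i$ plus a Taylor remainder of order $|k|\epsilon \|a_i^{[1]}\|_{\Ll^2}|\partial D_i|^{1/2} = O(|k|\epsilon^4)$ per scatterer, by \eqref{Decomposition-Estimate-a_1-L2}. For the $\nu \times \nabla u_i$-contribution the crucial identity is
\begin{equation}\nonumber
\int_{\partial D_i} \psi (\nu \times \nabla u_i) ds = -\int_{\partial D_i} u_i (\nu \times \nabla \psi) ds,
\end{equation}
obtained from $\psi \nabla u_i = \nabla(\psi u_i) - u_i \nabla \psi$ and the vanishing of $\int_{\partial D_i} \nu \times \nabla f ds$ on a closed surface. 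Applied with $\psi(y) = e^{-ik\tau\cdot y}$ and with one further Taylor expansion around $z_i$, it produces the leading term $-ik e^{-ik\tau \cdot z_i} \tau \times \mathcal{B}_i$, the remainder being controlled by \eqref{Decomposition-Estimate-u-L2Div}. Summing over $i$ yields the error $O((|k|^3 + |k|^2) m \epsilon^4)$ announced in \eqref{Far-Field-Approximation}.

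For the near-field expansion \eqref{Near-Field-Approximation} I repeat the same programme on $E^{\s}(x) = \int_{\partial D} \nabla_x \Phi_k(x,y) \times a(y) ds(y)$, Taylor-expanding $\nabla_x \Phi_k(x,y)$ about $y = z_i$ and controlling the remainder by the pointwise bound \eqref{doublgrad}. The $a_i^{[1]}$-integral produces $\nabla_x \Phi_k(x,z_i) \times \mathcal{A}_i$; the same integration-by-parts identity (now with $\psi = \Phi_k(x,\cdot)$) converts the $\nu \times \nabla u_i$-integral into $-\int u_i (\nu \times \nabla_y \Phi_k) ds$, which after Taylor-expanding $\nabla_y \Phi_k(x,y)$ about $z_i$ and passing the outer $\curl_x$ back inside reproduces $\curl_x \curl_x (\Phi_k(x,z_i) \mathcal{B}_i)$. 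Since $d(x, D_i) \geq \delta$ for the nearest scatterer and since the decay factors from \eqref{doublgrad} sum via \eqref{Exhaustiv-counting}, the global remainder is of the order $\epsilon^4/\delta^4 + \Varepsilon_{k,\delta,m} \epsilon^4$.

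The derivation of the linear system \eqref{SYS1}--\eqref{SYS2} is where I expect the main obstacle. The plan is to start from the boundary equation \eqref{equation-to-be-solved}, replace each off-diagonal $M^k_{ij,D}(a_j)$ by the leading terms of the Taylor expansion of $\nabla_x \Phi_k(x,y)$ about $(z_i, z_j)$ (which after the same integration-by-parts reduction applied to $a_j = a_j^{[1]} + \nu \times \nabla u_j$ produces the kernels $\nabla \Phi_k(z_i,z_j)$ and $\Pi_k(z_i,z_j)$ acting on $\mathcal{A}_j$ and $\mathcal{B}_j$), and then to extract two independent scalar-vector moments by inverting the two purely static double-layer operators. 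Testing against $[-\frac12 I + (K^0_{ii,D})^*]^{-1}(\nu) y^T$ produces, by \eqref{Polarization-Tensor}, the tensor $[\mathcal{P}_{\partial D_i}]$ and the equation for $\mathcal{A}_i$; testing against $[\frac12 I + (K^0_{ii,D})^*]^{-1}(\nu) y^T$ produces, by \eqref{Virtual-Mass-Tensor}, the tensor $[\mathcal{T}_{\partial D_i}]$ and the equation for $\mathcal{B}_i$. The delicate point is bookkeeping: one must keep separate the contributions of $a_i^{[1]}$ and of $u_i$ while inverting the same boundary operators, and verify that all cross terms indeed fall into the residuals $O(\epsilon^7/\delta^4 + |k|\Varepsilon_{k,\delta,m} \epsilon^7 + \ldots)$. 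The key technical tool here is the identity \eqref{Div-Identity-transform}, which rewrites integrals of tangential fields against polynomials in $(y - z_i)$ as integrals involving the surface divergence, together with the scaling \eqref{scalingtensors} of the polarisation tensors and the geometric summation \eqref{Exhaustiv-counting}; these ensure that the error collected from each of the $m$ scatterers aggregates cleanly into the announced bounds.
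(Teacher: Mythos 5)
Your treatment of the far-field \eqref{Far-Field-Approximation} and near-field \eqref{Near-Field-Approximation} expansions matches the paper's proof: decompose $a_i = a_i^{[1]} + \nu\times\nabla u_i$, Taylor-expand the kernel about $z_i$, use the surface integration by parts to convert the $\nu\times\nabla u_i$-integral into the moment $\mathcal{B}_i=\int_{\p D_i}\nu u_i\,ds$ (while the zeroth-order term already yields $\int_{\p D_i}a_i\,ds=\mathcal{A}_i$ since $\int_{\p D_i}\nu\times\nabla u_i\,ds=0$), and aggregate the errors via \eqref{Exhaustiv-counting}. That part is sound.

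The derivation of \eqref{SYS1} and \eqref{SYS2}, which you correctly flag as the delicate step, is where your plan has a real gap. You propose to test the boundary equation against $[-\tfrac12 I + (\KD{0}{i})^*]^{-1}(\nu)\,y^T$ and $[\tfrac12 I + (\KD{0}{i})^*]^{-1}(\nu)\,y^T$. But these are the integrands of the tensors \eqref{Polarization-Tensor} and \eqref{Virtual-Mass-Tensor}, not admissible test functions: equation \eqref{Equation-to-solve} lives in $\Ll^{2,\Div}_\ta(\p D_i)$ and neither expression is a tangential field. The mechanism is also genuinely asymmetric between the two moments. For \eqref{SYS1} one multiplies \eqref{Equation-to-solve} by $\nabla\phi$ and integrates by parts via \eqref{Div-definition}, exposing the surface divergence $\Div[\tfrac12 I+\MD{k}{i}]a = [\tfrac12 I - (\KD{k}{i})^*]\Div a - k^2\nu\cdot\SD{k}{i}a$ and turning the vectorial equation into a scalar one; choosing $\phi := [-\tfrac12 I + \KD{0}{i}]^{-1}(x-z_i)$ and using duality with \eqref{Divrel2} then gives $\int_{\p D_i}[\tfrac12 I - \KD{0}{i}]\phi\,\Div a\,ds = \mathcal{A}_i$, while $\Polt$ emerges from $\int_{\p D_i}\phi\,\nu^T ds$ in the off-diagonal moments. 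For \eqref{SYS2} one tests the vectorial equation directly with tangential fields $\psi_l = -\nu\times b_l$, where $b_l$ solves $[-\tfrac12 I + \MD{0}{i}](b_l) = -\nu\times V_l$ with $\curl V_l = e_l$; taking the surface divergence of that construction produces the scalar constraint $[\tfrac12 I + (\KD{0}{i})^*](\nu\cdot\curl\psi_l) = -\nu_i^l$. The two ingredients absent from your plan are precisely the commutation $[\tfrac12 I + \MD{0}{i}](\nu\times\nabla u) = \nu\times\nabla[\tfrac12 I + \KD{0}{i}](u)$ and the duality $\int_{\p D_i}\psi\cdot(\nu\times\nabla f)\,ds = -\int_{\p D_i}(\nu\cdot\curl\psi)f\,ds$, which together convert the diagonal term into $\int_{\p D_i}\nu_i^l u_i\,ds$. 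Testing against the tensor integrands directly, as you propose, does not produce either reduction, so the argument for the linear system does not close.
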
 

\subsection{Justification of \eqref{Far-Field-Approximation} and \eqref{Near-Field-Approximation}}
\begin{lemma}\label{Farfield-lemma-appro} 
 For $\Im k=0$, the far field pattern can be approximated by 
                  \begin{align}\label{Farfield-lemma-appro-estimate-1}
                  E^\infty(\tau)=\frac{ik}{4\pi}
                  \sum_{i=1}^m e^{-ik\tau.z_i}&\tau\times 
                  \left\{\int_{\p D_i}a_i~ds-\int_{\p D_i}\left(ik\tau.(y-z_i)\right) a_i(y)~ds(y)\right\} 
                  \end{align}
                   with an error estimate given by $O\left( e^{\abs{k}\epsilon}~  m~ (\abs{k}^3)\epsilon^4\right)$.
                   Precisely, in view of the decomposition \eqref{Decomposition-Dnsity-a} of \Cref{Decomposition-density-estimate} and \eqref{condition1} the far field  admits the following expansion
                  \begin{equation}\label{Farfield-lemma-appro-estimate-2}
                  \begin{aligned}
                   E^\infty(\tau)=\frac{ik}{4\pi}\sum_{i=1}^m e^{-ik\tau.z_i}\tau\times \left\{\mathcal{A}_i-ik\tau\times\mathcal{B}_i\right\}+
                                 O\left( (\abs{k}^3+\abs{k}^2)~ m\epsilon^4\right).
                  \end{aligned}
                  \end{equation}
\end{lemma}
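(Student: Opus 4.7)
The starting point is the exact representation $E^\infty(\tau)=\tfrac{ik}{4\pi}\,\tau\times\int_{\partial D}a(y)e^{-ik\tau\cdot y}\,ds(y)$. The plan is, first, to split this integral as a sum of contributions from the individual bodies $\partial D_i$ and factor $e^{-ik\tau\cdot y}=e^{-ik\tau\cdot z_i}\,e^{-ik\tau\cdot(y-z_i)}$, then Taylor expand the second factor to first order in $y-z_i$. The quadratic Taylor remainder is bounded pointwise by $\tfrac{1}{2}|k|^2|y-z_i|^2 e^{|k|\epsilon}\le C|k|^2\epsilon^2 e^{|k|\epsilon}$ on $\partial D_i$. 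Applying Cauchy--Schwarz and the a priori estimate $\|a\|_{\Ll^{2,\Div}_t}\le C_e\epsilon$ from Theorem~\ref{Decomposition-density-estimate} yields $\|a_i\|_{L^1(\partial D_i)}\le|\partial D_i|^{1/2}\|a_i\|_{L^2}\le C\epsilon^2$, hence a per-body error of order $|k|^2\epsilon^4 e^{|k|\epsilon}$. Summing over the $m$ bodies and multiplying by the outer $|k|/(4\pi)$ factor gives the announced $O(e^{|k|\epsilon}m|k|^3\epsilon^4)$ and thus \eqref{Farfield-lemma-appro-estimate-1}.

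To upgrade to \eqref{Farfield-lemma-appro-estimate-2}, I would substitute the decomposition $a_i=a_i^{[1]}+\nu\times\nabla u_i$ of Theorem~\ref{Decomposition-density-estimate} into the two moments of $a_i$ that appear in \eqref{Farfield-lemma-appro-estimate-1}. Three elementary surface identities drive the reduction. First, $\int_{\partial D_i}\nu\times\nabla u_i\,ds=0$ for a closed surface, which I would verify componentwise by rewriting the $l$-th component as $\nu\cdot(\nabla u_i\times e_l)$ and applying the divergence theorem to the solenoidal field $\nabla u_i\times e_l$ inside $D_i$ (using $\nabla\cdot(\nabla u_i\times e_l)=0$). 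Consequently $\int_{\partial D_i}a_i\,ds=\mathcal{A}_i$ exactly, with no error.

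Second, the ``non-divergence-free'' piece of the first moment is negligible: by \eqref{Decomposition-Estimate-a_1-L2} one has $\|a_i^{[1]}\|_{L^2(\partial D_i)}\le C_1\epsilon\,\|a_i\|_{\Ll^{2,\Div}_t}\le C\epsilon^2$, so Cauchy--Schwarz together with $|y-z_i|\le\epsilon$ and $|\partial D_i|^{1/2}\le C\epsilon$ produces $\bigl|\int_{\partial D_i}(\tau\cdot(y-z_i))\,a_i^{[1]}\,ds\bigr|\le C\epsilon^4$; multiplied by $|k|\cdot|k|/(4\pi)$ and summed over $i$, this accounts for exactly the $|k|^2 m\epsilon^4$ part of the final error. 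Third, and this is the core algebraic step, I would evaluate $\int_{\partial D_i}(\tau\cdot(y-z_i))(\nu\times\nabla u_i)\,ds$ in closed form. Setting $\phi(y):=\tau\cdot(y-z_i)$, so that $\nabla\phi=\tau$ is constant, and using the pointwise identity $\phi\,(\nu\times\nabla u_i)=\nu\times\nabla(\phi u_i)-u_i\,(\nu\times\nabla\phi)$, the integral splits into $\int_{\partial D_i}\nu\times\nabla(\phi u_i)\,ds=0$ (by the first identity applied to the scalar $\phi u_i$) and $-\int_{\partial D_i}u_i(\nu\times\tau)\,ds=\tau\times\int_{\partial D_i}u_i\nu\,ds=\tau\times\mathcal{B}_i$. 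Assembling the three pieces then gives \eqref{Farfield-lemma-appro-estimate-2} with total error $O((|k|^3+|k|^2)m\epsilon^4)$, the factor $e^{|k|\epsilon}$ being absorbed into the constant thanks to the smallness condition on $|k|\epsilon$ already present in \eqref{General-Condition}.

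The main obstacle I anticipate is precisely the exact identity in the third step: it is the mechanism that converts the generic first-order moment $\int(\tau\cdot(y-z_i))a_i\,ds$ into the tangential cross product $\tau\times\mathcal{B}_i$ that produces the clean electric-dipole structure of the leading far-field. The underlying reason the identity works is the solenoidality $\Div(\nu\times\nabla u_i)=0$ encoded in \eqref{npcurl}, which is what makes the contribution of $a_i^{[2]}=\nu\times\nabla u_i$ cleanly computable. Once this identity and the vanishing of $\int_{\partial D_i}\nu\times\nabla u_i\,ds$ are in hand, everything else is bookkeeping of Taylor remainders weighed against the a priori bounds of Theorem~\ref{Decomposition-density-estimate}.
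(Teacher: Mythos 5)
Your proposal is correct and follows essentially the same route as the paper's proof: split the far-field integral body by body, Taylor-expand $e^{-ik\tau\cdot(y-z_i)}$, control the quadratic remainder via the a priori bound $\norm{a_i}_{\Ll^{2,\Div}_\ta(\p D_i)}\le C_e\epsilon$, and then convert the first-order moment of $a_i=a_i^{[1]}+\nu\times\nabla u_i$ into $\mathcal{A}_i$ and $ik\tau\times\mathcal{B}_i$ by showing the $a_i^{[1]}$ first moment is $O(\epsilon^4)$ and integrating $\nu\times\nabla u_i$ by parts against $\tau\cdot(y-z_i)$. The only cosmetic difference is that you verify $\int_{\p D_i}\nu\times\nabla\psi\,ds=0$ by extending into $D_i$ and applying the volume divergence theorem to $\nabla\psi\times e_l$, whereas the paper effectively uses the surface-divergence identity $\int_{\p D_i}b\,ds=-\int_{\p D_i}(x-z_i)\,\Div b\,ds$ together with $\Div(\nu\times\nabla\psi)=0$; both are standard and equivalent, though the intrinsic surface argument avoids needing to extend the $\Hh^1(\p D_i)$ function $u_i$ into the interior.
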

\begin{proof} To prove (\ref{Farfield-lemma-appro-estimate-1}), we write
       \begin{align*}
       E^\infty(\tau)=\frac{ik}{4\pi}\sum_{i=1}^m e^{-ik\tau.z_i}&\tau\times\int_{\p D_i}a_m(y)~ds(y)+\frac{ik}{4\pi} \sum_{i=1}^m \tau\times\int_{\p D_i}\left(e^{-ik\tau.y}-e^{-ik\tau.z_i}\right)a_m(y)~ds(y)
       \end{align*}for every $i\in\{1,...,m\}$ and evaluate the term
 $$Q_{(\tau,z_m)}:=\tau\times\int_{\p D_i}\left(e^{-ik\tau.y}-e^{-ik\tau.z_m}\right)a_m(y)~ds(y).$$
 Developing the exponential in Taylor series, we obtain  
        \begin{align*}
        Q_{(\tau,z_m)}=&
        e^{-ik\tau.z_m}\tau\times\int_{\p D_i}\left(e^{-ik\tau.(y-z_i)}-1\right)a_m(y)~ds(y),\\
        =&e^{-ik\tau.z_m}\tau\times\int_{\p D_i}\sum_{n\geq1}\frac{\left(-ik\tau.(y-z_m)\right)^n}{n!}a_m(y)~ds(y),\\
        =&e^{-ik\tau.z_m}\tau\times\Bigl(\int_{\p D_i}\sum_{n\geq2}\frac{\left(-ik\tau.(y-z_m)\right)^n}{n!}a_m(y)~ds(y)+\int_{\p D_i}\left(-ik\tau.(y-z_m)\right) a_m(y)~ds(y)\Bigr).
       \end{align*} 
As $\abs{y-z_i}\leq \epsilon$, the first term gives us,\footnote{We have $\sum_{n\geq2} \frac{(\abs{k}\epsilon)^{n-2}}{n!}\leq \sum_{n\geq2} \frac{(\abs{k}\epsilon)^{n-2}}{(n-2)!}=e^{\abs{k}\epsilon}$.}
\begin{align*}
\Bigl|{e^{-ik\tau.z_i}\tau\times\int_{\p D_i}\sum_{n\geq2}\frac{\left(-ik\tau.(y-z_i)\right)^n}{n!}a_i(y)~ds(y)}\Bigr|
&\leq \abs{\p B_i}^\frac{1}{2} \sum_{n\geq2} \frac{(\abs{k}\epsilon)^n}{n!} \epsilon \norm*{a}_{\Ll^2(\p D_i)},\\
&\leq \abs{\p B_i}^\frac{1}{2} \sum_{n\geq2} \frac{(\abs{k}\epsilon)^{n-2}}{n!} \abs{k}^2\epsilon^4\leq \abs{\p B}^\frac{1}{2} e^{\abs{k}\epsilon} \abs{k}^2\epsilon^4.
\end{align*} 
Taking the sum over $i$, we get \begin{equation}\label{Cidessu2}  
                                                      E^\infty(\tau)=\frac{ik}{4\pi}
                                                      \sum_{i=1}^m e^{-ik\tau.z_i}\tau\times \Bigl(\mathcal{A}_i-\int_{\p D_i}\left(ik\tau.(y-z_i)\right) a_i(y)~ds(y)\Bigr)+O(e^{\abs{k}\epsilon} \abs{k}^3 m\epsilon^4).
                                             \end{equation} 
Now, considering the decomposition \eqref{Decomposition-Dnsity-a} of \cref{Decomposition-density-estimate}, we have 
                                                        \begin{align*}
                                                         \int_{\p D_i}\left(ik\tau.(y-z_i)\right) a_i(y)~ds(y)
                                                          &=\int_{\p D_i}ik\tau.(y-z_i) (a_i^{[1]}+a_i^{[2]})(y)~ds(y),\\
                                                          &= \int_{\p D_i}\left(ik\tau.(y-z_i)\right)a_i^{[2]}(y)~ds(y)+O(\abs{k}\epsilon^4),                                                   
                                                        \end{align*}
                                   where $O(\abs{k}\epsilon^4)$ comes from \begin{align*}
                                                                           \abs{\int_{\p D_i}ik\tau.(y-z_i) a_i^{[1]}(y)~ds(y)}
                                                                           \leq \abs{k}\epsilon^2\abs{\p B_i}\norm*{a^{[1]}}_{\Ll^2(\p D_i)}
                                                                           &\leq \abs{k}\epsilon^2\abs{\p B_i}C_1\epsilon \norm*{a}_{\Ll^{2,\Div}_\ta(\p D_i)},\\
                                                                           &\leq \abs{k}\abs{\p B_i}C_1C_e\epsilon^4.
                                                                           \end{align*}
Then $\int_{\p D_i}ik\tau.(y-z_i)~a_i~ds(y)=\int_{\p D_i}ik\tau.(y-z_i)~\nu\times\nabla u_i~ds(y)+O(\abs{k}\epsilon^4)$.                                                   
Multiplying by $ e^{-ik\tau.z_i} $ and taking the sum over $i$, we obtain \begin{align*}
                                                                \sum_{i=1}^m~e^{-ik\tau.z_i}\int_{\p D_i}\hspace{-0,2cm}ik\tau.(y-z_i) a_i~ds(y) 
                                                                &=\sum_{i=1}^m~e^{-ik\tau.z_i}\left(\int_{\p D_i}ik\tau.(y-z_i)\nu\times\nabla u_i~ds(y)+O(\abs{k}\epsilon^4)\right), 
                                                               \end{align*}
With this last approximation, \eqref{Cidessu2} gives \begin{align*}
                                                                   E^\infty(\tau)=&\frac{ik}{4\pi}
                                                                   \sum_{i=1}^m ~e^{-ik\tau.z_i}\tau\times 
                                                                   \Bigl(\mathcal{A}_i-\int_{\p D_i}ik\tau.(y-z_i)~\nu\times\nabla u_i(y)~ds(y)\Bigr)+ O\left((e^{\abs{k}\epsilon}\abs{k}^3+\abs{k}^2)~  m \epsilon^4\right).
                                                                  \end{align*} Finally, integrating by part the second term of the second member, we obtain  
                                                                  \begin{align*}
                                                                  \int_{\p D_i} ik\tau.(y-z_i)~\nu\times\nabla u_i~ds(y)
                                                                  &=-ik\int_{\p D_i}\left(\nu\times\nabla_y\tau.(y-z_i)\right) u_i~ds(y),\\
                                                                  &=+ik\int_{\p D_i}\left(\tau\times\nu\right) u_i(y)~ds(y)=ik\tau\times\mathcal{B}_i.
                                                                  \end{align*} \end{proof}
 \begin{lemma}\label{Field-Approximation-Lemma}
  The Electric field has the following asymptotic expansion
   \begin{equation}
   \begin{aligned}
    E^\s(x)=&\sum_{i=1}^m  \Bigl(\nabla_x\Phi_k(x,z_i)\times\mathcal{A}_i+  \nabla_y\times \nabla_x\times (\Phi_k(x,z_i) \mathcal{B}_i)\Bigr)\\
            &+O\biggl(\frac{1}{\delta}\biggl(\frac{1}{\delta^3}+\frac{3\abs{k}+1}{\delta^2}
              +\frac{5\abs{k}^2}{\delta_{i_0,i}}+\abs{k}^2+\abs{k}^3\biggr)~\epsilon^4 \biggr)\\
            &+O\biggl(\sum_{(i\neq i_0)\geq1}^m\frac{1}{\delta_{i_0,i}}\biggl(\frac{1}{\delta^3_{i_0,i}}+\frac{3\abs{k}+1}{\delta^2_{i_0,i}}
              +\frac{5\abs{k}^2}{\delta_{i_0,i}}+\abs{k}^2+\abs{k}^3\biggr)~\epsilon^4 \biggr). 
   \end{aligned}
  \end{equation}
\end{lemma}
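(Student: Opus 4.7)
The plan is to expand the representation (\ref{cas1}) locally around each scatterer. Starting from
\[
E^\s(x) = \sum_{i=1}^m \curl_x I_i(x), \qquad I_i(x) := \int_{\partial D_i}\Phi_k(x,y)\,a_i(y)\,ds(y),
\]
I will use the Helmholtz decomposition of \Cref{Decomposition-density-estimate} to write $a_i = a_i^{[1]} + \nu\times\nabla u_i$ and treat the two pieces separately. The smallness of the pieces in the relevant norms is furnished by (\ref{Decomposition-Estimate-a_1-L2})--(\ref{Decomposition-Estimate-u-L2Div}) together with the a priori bound $\norm*{a}_{\Ll^{2,\Div}_t}\leq C_e\epsilon$ from (\ref{Estimates-Of-The-Density-a}).

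For the piece carried by $a_i^{[1]}$, I Taylor-expand $\Phi_k(x,y) = \Phi_k(x,z_i) + R_{i,1}(x,y)$ in $y$ around $z_i$, with the remainder controlled by $|y-z_i|$ and a pointwise bound on $\nabla_y\Phi_k(x,\cdot)$ of the type (\ref{doublgrad}). The constant term contributes $\Phi_k(x,z_i)\mathcal{A}_i$, and taking $\curl_x$ yields the first summand $\nabla_x\Phi_k(x,z_i)\times\mathcal{A}_i$ in the claim. The remainder is estimated by Cauchy--Schwarz, using $\norm*{a_i^{[1]}}_{\Ll^2(\p D_i)}\leq C\epsilon\cdot\epsilon = O(\epsilon^2)$ and $|\partial D_i|^{1/2}=O(\epsilon)$, producing a factor $\epsilon^4$ times a derivative of $\Phi_k$ at scale $d(x,z_i)$.

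For the piece carried by $a_i^{[2]} = \nu\times\nabla u_i$, the crucial ingredient is the closed-surface identity
\[
\int_{\partial D_i} f\,(\nu\times\nabla v)\,ds \;=\; -\int_{\partial D_i} v\,(\nu\times\nabla f)\,ds,
\]
which follows from $\int_{\partial D_i}\nu\times\nabla(fv)\,ds=0$; this last fact I will obtain by extending $fv$ smoothly to a neighbourhood and invoking the divergence theorem together with the antisymmetry of $\epsilon_{lmn}$. Applied with $f=\Phi_k(x,\cdot)$ and $v=u_i$, it rewrites $I_i^{[2]} = -\int_{\partial D_i}u_i(y)\,\nu(y)\times\nabla_y\Phi_k(x,y)\,ds(y)$. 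I then Taylor-expand $\nabla_y\Phi_k(x,y)$ about $y=z_i$: the leading term is $-\bigl(\int u_i\nu\,ds\bigr)\times\nabla_y\Phi_k(x,z_i) = \nabla_y\Phi_k(x,z_i)\times\mathcal{B}_i$, while the remainder is controlled via $\norm*{u_i}_{\Ll^2(\p D_i)} = O(\epsilon^2)$ from (\ref{Decomposition-Estimate-u-L2Div}) and a bound on $\nabla_y^2\Phi_k$. Applying $\curl_x$ and using $\nabla_y\Phi_k(x,z_i)=-\nabla_x\Phi_k(x,z_i)$ together with $\curl_x\bigl(\nabla_x\Phi_k(x,z_i)\times\mathcal{B}_i\bigr)=\curl\curl\bigl(\Phi_k(x,z_i)\mathcal{B}_i\bigr)$ yields exactly the second summand in the claim.

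The error bookkeeping proceeds term by term. Each contribution has the schematic form $|\partial^{\alpha}\Phi_k(x,z_i)|\cdot \epsilon^{|\alpha|+1}\cdot \|a_i^{[1]} \text{ or } u_i\|_{\Ll^2}\cdot |\partial D_i|^{1/2}$, and the derivatives of $\Phi_k$ are estimated exactly as in (\ref{doublgrad}) at scale $d(x,z_i)$. Splitting the sum into the contribution from $i=i_0$ (the body at distance $\delta$ from $x$) and the contributions from the other bodies (for which the triangle inequality gives $d(x,z_i)\gtrsim \delta_{i_0,i}$) produces the two $O$-expressions in the statement. The main obstacle is the $a^{[2]}$ step: the zeroth-order Taylor term $\Phi_k(x,z_i)\int(\nu\times\nabla u_i)\,ds$ vanishes by Stokes, so the $\mathcal{B}_i$-moment only emerges at the next order, and producing it in the correct dyadic form requires performing the surface integration by parts \emph{first} and the Taylor expansion \emph{second}; reversing the order produces an object that does not factor through $\int u_i\nu\,ds$ and loses the desired structure.
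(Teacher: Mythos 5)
Your proof is correct and follows essentially the same strategy as the paper's: start from the representation \eqref{cas1}, Helmholtz-decompose $a_i=a_i^{[1]}+\nu\times\nabla u_i$ via Theorem~\ref{Decomposition-density-estimate}, Taylor-expand the kernel around $z_i$, convert the $a_i^{[2]}$ piece into the moment $\mathcal{B}_i=\int_{\p D_i}\nu u_i\,ds$ via the closed-surface integration-by-parts identity, and control the remainders with the $\Ll^2$ bounds \eqref{Decomposition-Estimate-a_1-L2}, \eqref{Decomposition-Estimate-u-L2Div}, the a priori estimate \eqref{Estimates-Of-The-Density-a}, and the triangle inequality $d(x,D_i)\geq\tfrac12\delta_{i_0,i}$. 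The only substantive thing to correct is your closing remark about the ordering. The paper in fact performs the Taylor expansion of $\nabla_x\Phi_k(x,\cdot)$ to first order in $y$ \emph{first} and then integrates by parts, and this \emph{does} factor through $\int_{\p D_i}\nu u_i\,ds$: the first-order Taylor term $\nabla_y\nabla_x\Phi_k(x,z_i)\cdot(y-z_i)$ is linear in $y$, so its surface gradient $\nabla_y\bigl(\nabla_x\Phi_k(x,z_i)\cdot(y-z_i)\bigr)=\nabla_x\Phi_k(x,z_i)$ is a constant vector that can be pulled out of the integral against $\nu u_i$, yielding $\nabla_x\Phi_k(x,z_i)\times\mathcal{B}_i$. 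Both orderings produce the same leading term and remainders of the same order $\epsilon^4\,\delta_{i_0,i}^{-1}(\delta_{i_0,i}^{-1}+|k|)^3$; yours is a valid alternative, but the one you dismiss as structure-destroying is exactly the one the paper uses.
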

  \begin{proof}
  For $x \in \R^3 \setminus\cup_{i=1}^m \overline{D_i},$ using Taylor formula with integral reminder, 
  we get from the representation \eqref{cas1}
   \begin{align*}
    E^\s(x)&=\sum_{i=1}^m \int_{\p D_i} \left(\nabla_x \Phi_k(x,z_i)+ \bigl(\nabla_y\nabla_x \Phi_k(x,z_i)\cdotp (y-z_i)\bigr)\right)\times a_i(y)~ds(y)\\
              &\hspace{2cm}+\sum_{i=1}^m \int_{\p D_i}\int_{0}^{1}D^3\Phi_k(x,ty+(1-t)z_i)\circ (y-z_i) (y-z_i)\times a_i(y)~ds(y),
   \end{align*}
which is\footnote{Recall that $\nabla_y\nabla_x\Phi_k(x,y)= -\nabla_x\nabla_x\Phi_k(x,y).$}
  \begin{equation} \label{Field-Firts-Approximation}
   \begin{aligned}
    E^\s(x)&=\sum_{i=1}^m  \left(\nabla_x \Phi_k(x,z_i)\times \mathcal{A}_i+\int_{\p D_i}\biggl(-\nabla_x \bigl(\nabla_x \Phi_k(x,z_i)\cdotp (y-z_i)\bigr)\biggr)\times a_i(y)~ds(y)\right)+\\
              &\hspace{2cm}\sum_{i=1}^m \int_{\p D_i}\int_{0}^{1}D^3\Phi_k(x,ty+(1-t)z_i)\circ (y-z_i) (y-z_i)\times a_i(y)~ds(y),\\
   \end{aligned}
  \end{equation}
As it was done in \eqref{gradientPhiappro}, with $d_{x,i}:=d{(x,{\p D_i})}$, we have 
                      \begin{align*}
                      \bigl|{\int_{\p D_i}\int_{0}^{1}D^3\Phi_k(x,ty+(1-t)z_i)\circ (y-z_i) (y-z_i)\times a_i(y)~ds(y)}\bigr|&\leq\\
                                &\hspace{-3cm}\frac{e^{-\Im k d_{x,i}}}{d_{x,i}}\Bigl(\frac{1}{d_{x,i}}+\abs{k}\Bigr)^3\epsilon^2\int_{\p D_i}\abs{a_i}ds.
                      \end{align*} 
For a fixed $x\in\R^3\setminus \Omega$, set $d_x:=\min_{i\in\{1,...,m\}} d_{x,i},$ hence there exists some $i_0$ such that $d_x=d(x,\p D_{i_0}),$ further $\delta_{i_0,i}=d(\p D_{i_0},\p D_i)\leq d_{x,i}+d_{x,i_0}$ 
    from which follows 
    \begin{equation}\label{Min-indices-Satisfying-d(x,D_i)} 
    \frac{1}{d_{x,i}}\leq \frac{2}{\delta_{i_0,i}}.
     \end{equation}
     Summing over $i$, the reminder, remain smaller then
                    \begin{equation}\label{Error-Field-Approximation}
                     O\Bigl(\sum_{(i\neq i_0)\geq1}^m\frac{e^{-\Im k\delta_{i_0,i}/2}}{\delta_{i_0,i}}
                     \bigl(\frac{1}{\delta_{i_0,i}}+\abs{k}\bigr)^3~\epsilon^4+\frac{e^{-\Im k\delta}}{\delta}\bigl(\frac{1}{\delta}+\abs{k}\bigr)^3~\epsilon^4\Bigr).
                     \end{equation} 
    The second term under the sum of \eqref{Field-Firts-Approximation} is precisely
                         \begin{equation}\label{Second-Term-Field-approximation}
                           \begin{aligned}
                           &\nabla_y\times \nabla_x\times (\Phi_k(x,z_i) \mathcal{B}_i)+
                           0\Bigl(\frac{e^{-\Im k\delta_{i_0,i}/2}}{\delta_{i_0,i}}\bigl(\frac{1}{\delta_{i_0,i}}+\abs{k}\bigr)^2~\epsilon^4\Bigr).
                           \end{aligned}
                           \end{equation} 
Indeed, 
                       \begin{align*}
                               \int_{\p D_i} \nabla_x \bigl(\nabla_x \Phi_k(x,z_i)\cdotp (y-z_i)\bigr)\times a_i(y)~ds(y)= \int_{\p D_i}\nabla_x\times~  \bigl[\bigl(\nabla_x \Phi_k(x,z_i)\cdotp (y-z_i)\bigr) a_i(y)\bigr]~ds(y).
                               \end{align*} 
   As we did for the far field approximation, we get in view of decomposition \eqref{Decomposition-Dnsity-a}
                                                        \begin{align}\label{FieldSecondTermappr1}
                                                         \int_{\p D_i}\left(\nabla_x\Phi_k(x,z_i).(y-z_i)\right) a_i(y)~ds(y)
                                                        = 
                                                        \int_{\p D_i}\left(\nabla_x\Phi_k(x,z_i).(y-z_i)\right) \left(a_i^{[1]}+a_i^{[2]}\right)(y)~ds(y),                                                   
                                                        \end{align} and being \begin{equation*}
                                                                                \abs{\int_{\p D_i}\nabla_x\left(\nabla_x\Phi_k(x,z_i).(y-z_i)\right)\times a_i^{[1]}(y)~ds(y)} \leq 
                                                                                C\frac{e^{-\Im k d_{x,i}}}{d_{x,i}}\left(\frac{1}{d_{x,i}}+\abs{k}\right)^2~\epsilon \int_{\p D_i}\abs*{a_i^{[1]}}ds,
                                                                               \end{equation*} we get, due to \eqref{Decomposition-Estimate-a_1-L2} 
                                                                               \begin{equation}\label{nomoreneeded}
                                                                               {\int_{\p D_i}\nabla\left(\nabla\Phi_k(x,z_i).(y-z_i)\right)\times a_i^{[1]}(y)~ds(y)}=
                                                                               0\Bigl(\frac{e^{-\Im k d_{x,i}}}{d_{x,i}}\left(\frac{1}{d_{x,i}}+\abs{k}\right)^2~\epsilon^4\Bigr).
                                                                               \end{equation} 
                                                                               Further, integrating by part, in the second step of the following identities  
                                                                                                       \begin{align*}
                                                                                                        {\int_{\p D_i}\left(\nabla_x\Phi_k(x,z_i).(y-z_i)\right) a_i^{[2]}(y)~ds(y)}
                                                                                                        &=
                                                                                                        \int_{\p D_i}\left(\nabla_x\Phi_k(x,z_i).(y-z_i)\right)  \nu\times\nabla u_i(y)~ds(y),\\
                                                                                                        &=
                                                                                                        -\int_{\p D_i}\nu\times\nabla_y\left(\nabla_x\Phi_k(x,z_i).(y-z_i)\right)  u_i(y)~ds(y),\\
                                                                                                        &=\int_{\p D_i}\nabla_x\Phi_k(x,z_i)\times\nu  u_i(y)~ds(y)=\nabla_x\Phi_k(x,z_i)\times \mathcal{B}_i,
                                                                                                        \end{align*} and differentiating, we get \begin{align*}
                                                                                                                                  -\nabla_x\times \int_{\p D_i}\left(\nabla_x\Phi_k(x,z_i).(y-z_i)\right) a_i^{[2]}(y)~ds(y)
                                                                                                                                  =
                                                                                                                                  \nabla_y \nabla_x (\Phi_k(x,z_i) \mathcal{B}_i).
                                                                                                                                 \end{align*} 

Hence, considering \eqref{nomoreneeded}, \eqref{Second-Term-Field-approximation} follows from \eqref{FieldSecondTermappr1}.

Replacing \eqref{Second-Term-Field-approximation} and \eqref{Error-Field-Approximation} in \eqref{Field-Firts-Approximation} gives
   \begin{align*}
    E^\s(x)=\sum_{i=1}^m & \biggl(  \nabla_x\Phi_k(x,z_i)\times\mathcal{A}_i+  \nabla_y\times \nabla_x\times (\Phi_k(x,z_i) \mathcal{B}_i)+O\Bigl(\frac{e^{-\Im k d_{x,i}}}{d_{x,i}}\bigl(\frac{1}{d_{x,i}}+\abs{k}\bigr)^2~m\epsilon^4\Bigr)\biggr)\\
                         &+O\Bigl( \sum_{i=1}^m\frac{e^{-\Im k\delta_{i_0,i}/2}}{\delta_{i_0,i}}\bigl(\frac{1}{\delta_{i_0,i}}+\abs{k}\bigr)^3~\epsilon^4\Bigr). 
   \end{align*}
Repeating the calculations done to get \eqref{Error-Field-Approximation}, we obtain
     \begin{align*}
    E^\s(x)=&\sum_{i=1}^m  \Bigl(\nabla_x\Phi_k(x,z_i)\times\mathcal{A}_i+  \nabla_y\times \nabla_x\times (\Phi_k(x,z_i) \mathcal{B}_i)\Bigr)\\
            &+O\Bigl(\frac{e^{-\Im k\delta}}{\delta}\Bigl[\Bigl(\frac{1}{\delta}+\abs{k}\Bigr)^2
             +\Bigl(\frac{1}{\delta}+\abs{k}\Bigr)^3\Bigr]~\epsilon^4\Bigr)\\ 
            &+O\Bigl(\sum_{(i\neq i_0)\geq 1}^m\frac{e^{-\Im k\delta_{i_0,i}/2}}{\delta_{i_0,i}}\Bigl[\Bigl(\frac{1}{\delta_{i_0,i}}+\abs{k}\Bigr)^2
             +\Bigl(\frac{1}{\delta_{i_0,i}}+\abs{k}\Bigr)^3\Bigr]~\epsilon^4\Bigr).
  \end{align*} 

The approximation \eqref{Near-Field-Approximation} follows using \eqref{Error-Exhaustiv-Expression}.
\end{proof}

\subsection{Justification of (\ref{SYS1}) and (\ref{SYS2})}
We provide the justification of (\ref{SYS2}) and then the one of (\ref{SYS1}).

\subsubsection{Justification of \eqref{SYS2}}

Let $\psi$ be any smooth enough vectorial function. Multiplying by \eqref{Equation-to-solve} and integrating over $\p D_i$, we get
                                       \begin{equation}\label{First-Linear-Sys-Approximation} 
                                              \begin{aligned}
                                              \int_{\p D_i}& \psi\cdotp[\frac{1}{2}I+M_{ii,_D}^k]a~ds+\sum_{(j\neq i)\geq1}^m\int_{\p D_i} \psi\cdotp [M_{ij,_D}^k](a_j)~ds
                                                =-\int_{\p D_i} \psi\cdotp\nu_i\times E^\n~ds.
                                               \end{aligned} 
                                      \end{equation} 
                                      
Recalling  the scaling \eqref{est1} and the estimate \eqref{Norm-M_k-M_0}\footnote{With ${\Ll^2(\p D_i)}$ norms.}, we have
                                                \begin{align*}
                                                \biggl|\int_{\p D_i}~ \psi\cdotp[\frac{1}{2}I+\MD{k}{i}]a_i^{[1]}~ds\Biggr|&\leq
                                                \norm*{\psi}\left(\norm*{[\frac{1}{2}I+\MD{0}{i}]a_i^{[1]}}+
                                                \norm*{[\MD{k}{i}-\MD{0}{i}]a_i^{[1]}}\right),\\
                                                &\leq 
                                                \norm*{\psi}_{\Ll^2(\p D_i)}\Bigl(C_{\scriptscriptstyle{_{[\MB{0}{i}]}}}
                                                +\frac{\abs{k}^2(\abs{\p B})\epsilon^2}{4\pi}
                                                 \Bigr)\norm*{a_i^{[1]}}_{\Ll^2(\p D_i)}.
                                                \end{align*} 
                                                
In view of the decomposition \eqref{Decomposition-Dnsity-a}, we obtain $\bigl|\int_{\p D_i} \psi\cdotp[\frac{1}{2}I+(\MD{k}{i})]a_i^{[1]}~ds\bigr|=
 O(\norm*{\psi}_{\Ll^2(\p D_i)}\epsilon^2),$ and then \eqref{First-Linear-Sys-Approximation} gives 
                                               \begin{align*}
                                              O(\norm*{\psi}_{\Ll^2(\p D_i)}\epsilon^2)+\int_{\p D_i} \psi\cdotp[\frac{1}{2}I+\MD{0}{i}
                                              +{(\MD{k}{i}-\MD{0}{i})}](a_i^{[2]})~&ds\\
                                              +\sum_{(j\neq i)\geq1}^m\int_{\p D_i} \psi\cdotp [M_{ij,_D}^k] (a_j^{[1]}+a_j^{[2]})~ds)~&ds=
                                               -\int_{\p D_i} \psi\cdotp\nu_i\times E^\n~ds.
                                               \end{align*} 
Using \eqref{Norm-M_k-M_0} and the estimates \eqref{Decomposition-Estimate-a_2} of the decomposition \eqref{Decomposition-Dnsity-a}, in the left hand side, we get 
                                                                             \begin{equation}\label{First-Linsys-approximation-step1} 
                                                                                   \begin{aligned}
                                                                                   O(\norm*{\psi}_{\Ll^2}\epsilon^2)+\int_{\p D_i} \psi\cdotp[\frac{1}{2}I+\MD{0}{i}](a_i^{[2]})~ds
                                                                                   +O(\abs{k\epsilon}^2\norm*{\psi}_{\Ll^2}\epsilon)&\\
                                                                                   +\sum_{(j\neq i)\geq1}^m\int_{\p D_i} \psi\cdotp [M_{ij}](a_j^{[1]}+a_j^{[2]})~ds&=\int_{\p D_i}-\psi\cdotp\nu_i\times E^\n~ds.
                                                                                   \end{aligned} 
                                                                                   \end{equation} 

Now, we show how we choose appropriate candidates $\psi$ to derive the estimates (\ref{SYS1}) and (\ref{SYS2}).

\begin{lemma}\label{Lemme-Def-Psi}
There are functions $(\psi_l)_{l=1,2,3.}$ such that 
$\nu\times\psi_l \in \Ll^{2,\Div}_\ta(\p D_i)$ and satisfying, for constants $C_{_{(\MB{0}{i})}}, C_{_{(\MB{0}{i},\KB{0*}{i})}}$ which depends only on $\abs{\p B},$
 \begin{equation}\label{psiestim}
        \begin{aligned}
        \norm*{\nu\times\psi_l}_{\Ll^2(\p Di)}
                                                             \leq C_{_{(\MB{0}{i})}} \abs{\p B}\epsilon^2,\,~ \,~ \norm*{\nu\times\psi_l}_{\Ll^{2,\Div}_\ta(\p Di)}
                                                             \leq C_{_{(\MB{0}{i},\KB{0*}{i})}} \abs{\p B}\epsilon,                            
                                                             \end{aligned} 
                                                        \end{equation}     and for which the approximation
                                                       \begin{equation}     
                                                           \int_{\p D_i} \psi_l\cdotp[\frac{1}{2}I+\MD{0}{i}](a_i^{[2]})~ds=O(\epsilon^4+\abs{k}^2\epsilon^5)+\int_{\p D_i} \nu_i^l u_i~ds, 
                                                       \end{equation} hold, here $\nu\times\nabla u_i=a_i^{[2]}$.
                                                  \end{lemma}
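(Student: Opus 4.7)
\textbf{Proof plan for \Cref{Lemme-Def-Psi}.}

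The purpose of the $\psi_l$'s is to test the identity \eqref{First-Linsys-approximation-step1} so that, after pairing against $[\frac{1}{2}I+\MD{0}{i}](a_i^{[2]})$ with $a_i^{[2]}=\nu\times\nabla u_i$, the leading term of the right-hand side becomes $\int_{\p D_i}\nu_i^l u_i\,ds$, i.e.\ exactly the $l$-th component of $\mathcal{B}_i$. This is precisely the ``reduction to the scalar Helmholtz/double-layer case'' that the authors refer to in the introduction. My plan is therefore to construct $\psi_l$ from a scalar auxiliary potential and then compute the duality pairing via adjoints and surface integration by parts.

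First, I would pick an auxiliary scalar function $\phi_l\in \Hh^1(\p D_i)$ tied to the $l$-th coordinate: a natural candidate is to take $\phi_l$ so that $\nu\times\nabla\phi_l$ encodes the action required to extract $\nu_i^l$ under a surface divergence. Concretely, I set $\psi_l=\nu\times\nabla\phi_l$ (this makes $\nu\times\psi_l$ automatically tangential, hence in $\Ll^{2,\Div}_\ta(\p D_i)$ as soon as $\phi_l\in\Hh^1$). Then I would apply the basic identity $A\cdot(\nu\times B)=(A\times\nu)\cdot B$ together with \eqref{npcurl} and \eqref{Div-definition} to rewrite
\[
\int_{\p D_i}\psi_l\cdot [\tfrac12 I+\MD{0}{i}](\nu\times\nabla u_i)\,ds
=\int_{\p D_i}[\tfrac12 I+(\MD{0}{i})^*](\psi_l)\cdot(\nu\times\nabla u_i)\,ds,
\]
and then peel the gradient off $u_i$ through a surface integration by parts. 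This collapses the pairing into $\int_{\p D_i}u_i\,\bigl(\nu\cdot\operatorname{curl}[\tfrac12 I+(\MD{0}{i})^*](\psi_l)\bigr)\,ds$. Choosing $\phi_l$ so that this scalar kernel agrees with $\nu_i^l$ up to lower order terms delivers the claimed identity; equivalently, $\phi_l$ should satisfy a scalar integral equation built from $[\tfrac12 I+\KD{0}{i}]$ (the natural adjoint of $(K^0_{ii,_D})^*$, which already appears in the virtual mass tensor $[\mathcal{T}_{\p D_i}]$).

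Second, I would verify the approximation $\int \nu\cdot\operatorname{curl}(\MD{0}{i})^*(\psi_l)u_i\,ds=O(\epsilon^4+\abs{k}^2\epsilon^5)$. The leading $\frac12 I$-part will give exactly $\int_{\p D_i}\nu_i^l u_i\,ds$ by construction of $\phi_l$; the $\MD{0}{i}$-part is controlled by bounding it with $\norm*{\psi_l}_{\Ll^{2,\Div}_\ta}\cdot\norm*{[\MD{0}{i}]^* - \text{leading}}$, together with \eqref{Decomposition-Estimate-u-L2Div} ($\norm*{u_i}_{\Ll^2}\le C_4\epsilon\norm*{a}_{\Ll^{2,\Div}_\ta}$) from \Cref{Decomposition-density-estimate} and the a priori bound \eqref{Estimates-Of-The-Density-a}. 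The $\abs{k}^2\epsilon^5$ contribution comes from the Mean-Value-type estimate \eqref{Mean-Val-Thm} applied to $[\MD{k}{i}-\MD{0}{i}]$, exactly as in \eqref{Norm-M_k-M_0}.

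Third, the norm estimates \eqref{psiestim} come from the scaling identities \eqref{L2div-scaling}, \eqref{nablaSD-scaling}, \eqref{curlSD-scaling}. After rescaling to $\p B_i$, $\widehat{\phi_l}$ solves the analogous scalar equation on the fixed surface $\p B_i$, and the norm of $\nu\times\nabla\widehat{\phi_l}$ in $\Ll^2(\p B_i)$ and $\Ll^{2,\Div}_\ta(\p B_i)$ depends only on the Lipschitz character of $B_i$. Reading back the powers of $\epsilon$ through the scaling recovers the claimed $\epsilon^2$ and $\epsilon$ factors, with constants $C_{(\MB{0}{i})},C_{(\MB{0}{i},\KB{0*}{i})}$ of the promised shape.

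\textbf{Main obstacle.} The hard step is the explicit construction of $\phi_l$ that reduces the vector pairing to the scalar double layer, i.e.\ choosing the scalar potential whose gradient produces, after the adjoint and surface integration-by-parts calculation, a kernel that matches $\nu_i^l$ up to the required $O(\epsilon)$-relative error. This is essentially where the Maxwell$\to$Helmholtz reduction that the authors highlight (as non-trivial and ``a priori not natural'') is carried out at the level of a single scatterer; everything else is controlled testing of the remainders already appearing in \Cref{Decomposition-density-estimate}.
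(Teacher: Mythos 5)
Your high‑level idea is right: one should test \eqref{First-Linsys-approximation-step1} against a specially chosen tangential field $\psi_l$ so that, via surface integration by parts, the leading term becomes $\int_{\p D_i}\nu_i^l u_i\,ds$. But the concrete plan departs from the paper's construction in ways that leave genuine gaps.

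First, the decisive ingredient the paper uses — and which your plan never invokes — is the commutation relation of Lemma 5.11 in \cite{DMM96} (the paper's \eqref{nu-time-nabla-interversion}):
\[
[\tfrac12 I+\MD{0}{i}](\nu\times\nabla u_i)=\nu\times\nabla\,[\tfrac12 I+\KD{0}{i}](u_i),
\]
which pushes the whole magnetic‑dipole operator through $\nu\times\nabla$ and \emph{immediately} reduces the vectorial pairing to a pairing with the scalar double layer. Once this is in hand, one only ever needs the adjoint of the scalar operator $[\tfrac12 I+\KD{0}{i}]$, which is the standard $[\tfrac12 I+(\KD{0}{i})^*]$. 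Your route instead passes through the adjoint $[\tfrac12 I+(\MD{0}{i})^*]$ of the full vector operator and then needs $\nu\cdot\curl$ to commute with it (so that $\nu\cdot\curl[\tfrac12 I+(\MD{0}{i})^*]\psi_l$ can be identified with $[\tfrac12 I+(\KD{0}{i})^*](\nu\cdot\curl\psi_l)$). That commutation is exactly the dual form of \eqref{nu-time-nabla-interversion}, so it is true, but you neither state nor prove it, and it is precisely the nontrivial Maxwell‑to‑Helmholtz reduction the lemma hinges on; you cannot treat it as routine.

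Second, the ansatz $\psi_l=\nu\times\nabla\phi_l$ is more restrictive than, and different from, the paper's choice. The paper does not look for $\psi_l$ in the form of a rotated gradient: it defines $b_l=\nu\times\psi_l$ as the solution in $\Ll^{2,\Div}_\ta(\p D_i)$ of the \emph{vectorial} integral equation
\[
[-\tfrac12 I+\MD{0}{i}]\,b_l=-\nu\times V_l,\qquad V_l \text{ linear with }\curl V_l=e_l,
\]
and obtains the scalar characterization $[\tfrac12 I+(\KD{0}{i})^*](\nu\cdot\curl\psi_l)=-\nu_i^l$ only a posteriori, by taking the surface divergence (using $\Div(\nu\times V_l)=-\nu\cdot\curl V_l=-\nu_i^l$ and $\Div\,\MD{0}{i}=-(\KD{0}{i})^*\Div$). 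Two concrete consequences. (i) Your scalar equation involving $[\tfrac12 I+\KD{0}{i}]$ is the wrong operator: the reduction produces the \emph{adjoint} $[\tfrac12 I+(\KD{0}{i})^*]$ acting on $\nu\cdot\curl\psi_l$, and getting that right is exactly what lets the pairing produce $\int\nu_i^l u_i\,ds$ (hence, downstream, the tensor $\Vmt$ built from $[\tfrac12 I+(\KD{0}{i})^*]^{-1}(\nu)$). (ii) The two estimates in \eqref{psiestim} come out essentially for free from $\norm*{\nu\times V_l}_{\Ll^2(\p D_i)}=O(\epsilon^2)$, $\norm*{V_l}_{\Ll^{2,\Div}_\ta(\p D_i)}=O(\epsilon)$ and the scaled bound on $[-\tfrac12 I+\MB{0}{i}]^{-1}$; with your ansatz one must instead solve a Laplace–Beltrami problem $\Delta_{\p D_i}\phi_l=\nu\cdot\curl\psi_l$ for $\phi_l$ and track the $\epsilon$‑scaling of $\nabla_t\phi_l$ in $\Ll^2$ and $\Ll^{2,\Div}_\ta$, none of which is sketched. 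This is not automatic and is the other place your proposal stops short.

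In summary: the overall strategy (test against a tailored $\psi_l$, integrate by parts, reduce to a scalar double layer) matches the paper, but you are missing the commutation identity \eqref{nu-time-nabla-interversion} that is the engine of the reduction, you have the wrong scalar operator (the paper needs $[\tfrac12 I+(\KD{0}{i})^*]$, not $[\tfrac12 I+\KD{0}{i}]$), and your gradient ansatz for $\psi_l$ differs from the paper's construction via a vectorial integral equation in a way that makes both the existence and the scaling estimates of \eqref{psiestim} nontrivial and unverified.
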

\begin{proof}
Let $(b_l)_{l=1,2,3}$ be the solution of the following equation       
                                                                \begin{align} \label{Def-Psi_l}
                                                                [-\frac{1}{2}I+\MD{0}{i}](b_l)=-\nu\times V_l, 
                                                                \end{align} where, $(e_l)_{l=1,2,3.}$ being the canonical base of $\mathbb{R}^3$   
                                                                  $V_1=(0,0,(x-z_i)\cdotp e_2),~V_2=((x-z_i)\cdotp e_3,0,0)$ and $V_3=(0,(x-z_i)\cdotp e_1,0)$.  
 It is evident that $\curl V_l=e_l,$ further $b_l  \in \Ll^{2,Div}_\ta(\p D_i),$ and\footnote{Having $\nu\cdotp b_l=0~ \text{ and } \nu\cdotp\nu=1.$}
\begin{equation}\label{tang} 
 \nu\times(\nu\times b_l)=(\nu\cdotp b_l)\nu-(\nu\cdotp\nu) b_l=-b_l.
 \end{equation} 
 We put $\psi_l:=-\nu\times b_l,$ hence  $\nu\times\psi_l=b_l,$ and $\Div (\nu\times\psi_l)=\Div b_l.$
 Solving \eqref{Def-Psi_l} is amount to solve the following problem (it suffices to take the surface divergence in the identity (\ref{tang}))
                                                                                                    \begin{align}\label{Surface-Divergence-of-psi_l}
                                                                                                    [\frac{1}{2}I+(\KD{0}{i})^*](\nu\cdotp\curl\psi_l)=-\nu_i^l.
                                                                                                    \end{align}  

Further, as it was done in \eqref{est1} and \eqref{scal-norm-diag-operator}, the following estimates hold
                                                                                      \begin{align*}
                                                                                       \norm*{\nu\times\psi_l}_{\Ll^2(\p Di)}
                                                                                       &\leq 
                                                                                       \norm*{[-\frac{1}{2}I+\MB{0}{i}]^{-1}}_{\mathcal{L}(\Ll_\ta^2(\p B_i))}
                                                                                       \norm*{\nu\times V_l}_{\Ll^2(\p Di)}\leq 
                                                                                       C_{_{(\MB{0}{i})}} 
                                                                                       \abs{\p B}\epsilon^2,\\
                                                                                       \norm*{\nu\times\psi_l}_{\Ll^{2,\Div}_\ta(\p Di)}
                                                                                       &\leq 
                                                                                       \norm*{[-\frac{1}{2}I+\MB{0}{i}]^{-1}}_{\mathcal{L}(\Ll_\ta^{2,\Div}(\p B_i))}
                                                                                       \norm*{V_l}_{\Ll^{2,Div}_\ta(\p Di)}\leq C_{\MB{0}{i},{\KD{0*}{i}}}
                                                                                       \abs{\p B}\epsilon.
                                                                                       \end{align*}

We have the following relations (see Lemma 5.11 \cite{DMM96})
\begin{align}\label{nu-time-nabla-interversion}
 [\frac{1}{2}I+\MD{0}{i}](a_i^{[2]})=[\frac{1}{2}I+\MD{0}{i}](\nu\times \nabla u_i)=\nu\times \nabla[\frac{1}{2}I+\KD{0}{i}]( u_i),
\end{align}  
and, for every scalar function $w$, \footnote{A direct application of \eqref{Divrel-average-zero} with $a=w\psi$.}
                                                                                        \begin{equation}\label{partintegral}
                                                                                        \int_{\p D_i}w\nu\cdotp\curl\psi~ds=
                                                                                         -\int_{\p D_i}\psi\cdotp (\nu\times\nabla w)~ds.
                                                                                        \end{equation}
Hence, the term under the integral of the left hand side of \eqref{First-Linsys-approximation-step1}, using \eqref{nu-time-nabla-interversion} and 
\eqref{partintegral}, gives  
\begin{equation}\label{integraltransform}
\begin{aligned}
 \int_{\p D_i} \psi\cdotp[\frac{1}{2}I+\MD{0}{i}](a_i^{[2]})~ds
                                                                &=\int_{\p D_i} \psi\cdotp\nu\times \nabla[\frac{1}{2}I+\KD{0}{i}]( u_i)~ds,\\
                                                                &=-\int_{\p D_i} (\nu\cdotp\curl\psi)[\frac{1}{2}I+\KD{0}{i}]( u_i)~ds,\\
                                                                &=-\int_{\p D_i} [\frac{1}{2}I+(\KD{0}{i})^*](\nu\cdotp\curl\psi)~u_ids.
 \end{aligned} 
\end{equation} 

Using \eqref{integraltransform} with $\psi_l$ as in \eqref{Def-Psi_l}, we get \begin{align*}
                                                                            O( (\epsilon^2+\abs{k\epsilon}^2\epsilon) \epsilon^2 )+\int_{\p D_i} [\frac{1}{2}I+(\KD{0}{i})^*](\nu\cdotp\curl\psi_l)u_i~ds
                                                                            =O(\epsilon^4)+O(\abs{k}^2\epsilon \epsilon^4)-\int_{\p D_i} \nu_i^l u_i~ds,   \end{align*}
to conclude that 
\begin{equation}
                                                                             \int_{\p D_i} \psi_l\cdotp[\frac{1}{2}I+\MD{0}{i}](a_i^{[2]})~ds=O(\epsilon^4+\abs{k}^2\epsilon^5)+\int_{\p D_i} \nu_i^l u_i~ds.                 
                                                                            \end{equation} 
\end{proof}
We recall the notations, for $l=1,2,3,$ $$[\mathcal{T}_{\p D_i}]^l:=\int_{\p D_i} \nu_i^l[\frac{1}{2}I+\KD{0}{i}]^{-1}(x-z_i)~ds.$$

\begin{lemma}\label{secondlappr} The second term of the left hand side of \eqref{First-Linsys-approximation-step1} admits the following approximations
                                     \begin{equation} 
                                        \begin{aligned}
                                        \int_{\p D_i} \psi_l\cdotp [M_{ij,_D}^k]a_j^{[1]}~ds=&[\mathcal{T}_{\p D_i}]^l~ \cdotp\nabla_x\Phi_k(z_i,z_j)\times \mathcal{A}_j+ 
                                        O(\frac{1}{\delta_{i,j}}\bigl(\frac{1}{\delta_{i,j}}+\abs{k}\bigr)^2{\epsilon^7})\\
                                         \end{aligned}
                                     \end{equation} 
and
                                    \begin{equation}\label{First-LSys-Second-Part-Appro} 
                                        \begin{aligned}
                                        \int_{\p D_i} \psi_l\cdotp [M_{ij,_D}^k]( a_j^{[2]})~ds=&[\mathcal{T}_{\p D_i}]^l~\cdotp\left(-k^2\Phi_k(z_i,z_j)I+\nabla_x\nabla_y \Phi_k(z_i,z_j)\right) \mathcal{B}_j\\
                                        &+O\biggl(\frac{1}{\delta_{i,j}}\Bigl((\frac{1}{\delta_{i,j}}+\abs{k})^3+(\frac{1}{\delta_{i,j}}+\abs{k})\Bigr)\epsilon^7\biggr).\\
 \end{aligned}
\end{equation} 

In addition, we have the approximation  
                                     \begin{align}\label{incapproximation2}
                                      \int_{\p D_i} \psi_l\cdotp\nu_i\times E^\n~ds=\int_{\p D_i} \nu_i^l{ [\frac{1}{2}I+\KD{0}{i}]^{-1}(x-z_i)}~ds\cdotp E^\n(z_i) + O(k\epsilon^4).
                                     \end{align}\end{lemma}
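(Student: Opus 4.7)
All three estimates follow a common template: Taylor-expand the relevant kernel around the centers $z_i,z_j$, retain the leading term, and then dualise what remains against $\psi_l$ using the defining equations \eqref{Def-Psi_l}--\eqref{Surface-Divergence-of-psi_l} together with the integration-by-parts identity \eqref{partintegral} in order to extract the polarization vector $[\mathcal{T}_{\p D_i}]^l$. The standing bounds $\norm*{\psi_l}_{\Ll^2(\p D_i)}=O(\epsilon^2)$ from \eqref{psiestim} and $\norm*{a}_{\Ll^{2,\Div}_t}=O(\epsilon)$ from \eqref{Estimates-Of-The-Density-a} (together with the sharper bounds in \Cref{Decomposition-density-estimate}) control all remainders.

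For the first identity, I would substitute the zeroth-order Taylor expansion $\nabla_x\Phi_k(x,y)\approx \nabla_x\Phi_k(z_i,z_j)$ into $M_{ij,_D}^k a_j^{[1]}(x)$, using the kernel bound $\abs{\nabla_x\nabla_y\Phi_k}\leq C\delta_{i,j}^{-1}(\delta_{i,j}^{-1}+\abs{k})^2$ from \eqref{doublgrad} to see that the remainder is $O\bigl(\delta_{i,j}^{-1}(\delta_{i,j}^{-1}+\abs{k})^2\epsilon\bigr)$ pointwise. Since $\int_{\p D_j}a_j^{[1]}\,ds=\mathcal{A}_j$ by definition, the leading term collapses to $\int_{\p D_i}\psi_l\cdotp(\nu\times V)\,ds$ with the fixed vector $V:=\nabla_x\Phi_k(z_i,z_j)\times\mathcal{A}_j$. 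I would then use $\psi_l\cdotp(\nu\times V)=-b_l\cdotp V$ with $b_l=\nu\times\psi_l$, together with $\Div b_l=-\nu\cdotp\curl\psi_l$ via \eqref{npcurl} and \eqref{Divrel2}, to rewrite $\int b_l\,ds=-\int (x-z_i)\Div b_l\,ds$. Inverting $[\frac{1}{2}I+(\KD{0}{i})^*]$ through \eqref{Surface-Divergence-of-psi_l} and passing to the adjoint recovers precisely $[\mathcal{T}_{\p D_i}]^l$. Combining with $\norm*{a_j^{[1]}}_{\Ll^2}=O(\epsilon^2)$ (from \eqref{Decomposition-Estimate-a_1-L2}) and $\norm*{\psi_l}_{\Ll^2}=O(\epsilon^2)$ yields the claimed $O\bigl(\delta_{i,j}^{-1}(\delta_{i,j}^{-1}+\abs{k})^2\epsilon^7\bigr)$ error after integrating over $\p D_i$.

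The second identity is the delicate one because $a_j^{[2]}=\nu\times\nabla u_j$ has vanishing zeroth moment, so the zeroth-order Taylor term gives no contribution and one must push to first order: $\nabla_x\Phi_k(x,y)\approx \nabla_x\Phi_k(z_i,z_j)+\nabla_y\nabla_x\Phi_k(z_i,z_j)(y-z_j)$. Integration by parts on $\p D_j$ transfers the factor $y-z_j$ onto $u_j$ and produces $\mathcal{B}_j=\int\nu u_j\,ds$. To obtain the scalar piece $-k^2\Phi_k(z_i,z_j)I$ displayed in the lemma, I would invoke the Helmholtz identity $(\Delta_y+k^2)\Phi_k=0$ to convert the trace of $\nabla_x\nabla_x\Phi_k$ appearing naturally from the cross-product algebra into the dyadic combination in \eqref{Dyadic-Green-function}; the resulting constant vector is then contracted with $[\mathcal{T}_{\p D_i}]^l$ by the same dualisation as in part one. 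The error has two contributions: a second-order Taylor remainder giving $\delta_{i,j}^{-1}(\delta_{i,j}^{-1}+\abs{k})^3\epsilon^7$, and a $k^2\Phi_k$-type subleading term producing $\delta_{i,j}^{-1}(\delta_{i,j}^{-1}+\abs{k})\epsilon^7$, controlled by $\norm*{u_j}_{\Ll^2(\p D_j)}=O(\epsilon^3)$ from \eqref{Decomposition-Estimate-u-L2Div} and \eqref{Estimates-Of-The-Density-a}.

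The third identity is the simplest: a first-order Taylor expansion $E^\n(x)=E^\n(z_i)+O(\abs{k}\epsilon)$ produces a remainder bounded by $\abs{k}\epsilon\cdot\norm*{\psi_l}_{\Ll^2}\cdot\abs{\p D_i}^{1/2}=O(\abs{k}\epsilon^4)$, and the leading term $\int\psi_l\cdotp(\nu\times E^\n(z_i))\,ds$ is evaluated by the same dualisation $\psi_l\cdotp(\nu\times E^\n(z_i))=-b_l\cdotp E^\n(z_i)$ and inversion of \eqref{Surface-Divergence-of-psi_l} against the constant $E^\n(z_i)$. The main obstacle is part two, where one must simultaneously choose the order of Taylor expansion correctly in view of the vanishing moments of $a_j^{[2]}$, invoke the Helmholtz equation to reassemble $\nabla_x\nabla_x\Phi_k$ into the dyadic form defining $\Pi_k$, and keep the prefactors uniform in $j$ so that the subsequent sum over $j\neq i$ collapses through \eqref{Error-Exhaustiv-Expression} into the $\Varepsilon_{k,\delta,m}\epsilon^7$ error of the linear system \eqref{SYS1}--\eqref{SYS2}.
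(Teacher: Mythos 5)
Your plan for the first and third identities matches the paper's proof step by step: add and subtract $\nabla_x\Phi_k(z_i,z_j)$ (resp.\ $E^\n(z_i)$), use the zeroth moment of $a_j^{[1]}$ to collapse the leading term to a constant vector, rewrite $\int_{\p D_i}(\nu\times\psi_l)\,ds = -\int_{\p D_i}(x-z_i)\,\Div(\nu\times\psi_l)\,ds$ via \eqref{Divrel2}, and invert $[\frac{1}{2}I+(\KD{0}{i})^*]$ through \eqref{Surface-Divergence-of-psi_l} to recover $[\mathcal{T}_{\p D_i}]^l$; the error bookkeeping via \eqref{psiestim}, \eqref{Decomposition-Estimate-a_1-L2}, \eqref{Decomposition-Estimate-u-L2Div} is also the paper's. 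For the second identity you take a genuinely different micro-route. The paper first invokes the exact vector-calculus identity $[M_{ij,_D}^k](\nu\times\nabla u_j)=\nu\times\nabla[K_{i,j}^k]u_j-k^2\,\nu\times[S_{ij,_D}^k](\nu_y u_j)$ (Lemma 5.11 of \cite{DMM96}), which cleanly isolates the $-k^2\Phi_k$ contribution, and only then Taylor-expands each of the two pieces and dualises against $\psi_l$ through \eqref{integraltransform}. You instead propose to Taylor-expand the kernel of $M_{ij,_D}^k$ directly to first order in $y$, observe the vanishing zeroth moment of $a_j^{[2]}$, integrate by parts onto $u_j$, and then reconstruct the dyadic structure by cross-product algebra plus the identity $\mathrm{tr}\,\nabla_x\nabla_y\Phi_k=-\Delta_y\Phi_k=k^2\Phi_k$. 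That route works and is more self-contained, but the unstated cross-product computation is exactly the content of the Lemma~5.11 identity, and it is the only nontrivial step in your plan that you have not actually carried out; citing the identity, as the paper does, removes that work. One small slip: combining \eqref{Decomposition-Estimate-u-L2Div} with \eqref{Estimates-Of-The-Density-a} gives $\norm*{u_j}_{\Ll^2(\p D_j)}=O(\epsilon^2)$, not $O(\epsilon^3)$; the extra $\epsilon$ you want enters through the measure factor $|\p D_j|^{1/2}$ when bounding $\int_{\p D_j}\nu u_j\,ds$, so your final error estimate is still correct.
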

\begin{proof} Adding and subtracting $\nabla_x\Phi_k(z_i,y),$ we write  
                              \begin{equation}\label{Second-Part-Lemma-Appr}   
                               \begin{aligned}
                               \int_{\p D_i} \psi_l\cdotp [M_{ij,_D}^k]a_j^{[1]}~ds
                               =&\int_{\p D_i} \psi_l\cdotp (\nu_{x_i} \times\int_{\p D_j}\left(\nabla_x\Phi_k(x_i,y)-\nabla_x\Phi_k(z_i,y)\right)
                                 \times a_j^{[1]}(y) ~ds(y))~ds(x) \\
                                &+\int_{\p D_i} \psi_l\cdotp (\nu_{x_i} \times\int_{\p D_j}\left(\nabla_x\Phi_k(z_i,y)\right)
                               \times a_j^{[1]}(y) ~ds(y))~ds(x).
                               \end{aligned}
                        \end{equation} 
For the first integral of the right hand side, we get, using Holder's inequality then the Mean-value-theorem, with $\Ll^2(\p D_i)$ norm,  
                                     \begin{align*}
                                      \Biggl|\int_{\p D_i} \psi_l\cdotp (\nu_{x_i} \times\int_{\p D_j}&\left(\nabla_x\Phi_k(x_i,y)-\nabla_x\Phi_k(z_i,y)\right)
                                        \times a_j^{[1]}(y) ~ds(y))~ds(x)\Biggr|
                                        \\
                                        &\leq\norm*{\psi_l}~\norm*{\int_{\p D_j}\left(\nabla_x\Phi_k(x_i,y)-\nabla_x\Phi_k(z_i,y)\right)
                                        \times a_j^{[1]}(y)~ds(y)}.
                                     \end{align*} 
As
                                     \begin{align*}
                                     \Bigl|(\int_{\p D_j}\left(\nabla_x\Phi_k(x_i,y)-\nabla_x\Phi_k(z_i,y)\right)
                                     &\times a_j^{[1]}(y))~ds(y) \Bigr|
                                     \\
                                     &\leq  \sup_{x\in\p D_i}\abs{(x-z_i)}
                                     \norm*{a_j^{[1]}}\sup_{x\in\p D_i,y\in \p D_j}\abs{\nabla_x\nabla_x\Phi_k(x_i,y)}\epsilon\abs{\p B_i}^\frac{1}{2},
                                     \end{align*} 
                        with \eqref{doublgrad}, we get
                                        \begin{align*}
                                        \biggl|(\int_{\p D_j}\left(\nabla_x\Phi_k(x_i,y)-\nabla_x\Phi_k(z_i,y)\right)
                                        \times a_j^{[1]}(y))~ds(y)~~ds(x) \biggr|
                                        \leq \frac{e^{-\Im k~\delta_{i,j}}}{4\pi}\frac{1}{\delta_{i,j}}\bigl(\frac{1}{\delta_{i,j}}+\abs{k}\bigr)^2\epsilon^4.
                                        \end{align*} 
                                        
Finally \begin{align*}
                                                            \biggl|\int_{\p D_i} \psi_l\cdotp (\nu_{x_i} \times\int_{\p D_j}\left(\nabla_x\Phi_k(x_i,y)-\nabla_x\Phi_k(z_i,y)\right)
                                                            &\times a_j^{[1]}(y) ~ds(y))~ds(x)\biggr|
                                                            \leq\frac{e^{-\Im k~\delta_{i,j}}}{4\pi\delta_{i,j}}\bigl(\frac{1}{\delta_{i,j}}+\abs{k}\bigr)^2\epsilon^7,
                                                            \end{align*} 
          which is \begin{equation}\label{Erreur-Approx-Sys1-Lemma1}
                   \int_{\p D_i} \psi_l\cdotp (\nu_{x_i} \times\int_{\p D_j}\left(\nabla_x\Phi_k(x_i,y)-\nabla_x\Phi_k(z_i,y)\right)
                   \times a_j^{[1]}(y) ~ds(y))~ds(x)=O(\frac{1}{\delta_{i,j}}\bigl(\frac{1}{\delta_{i,j}}+\abs{k}\bigr)^2{\epsilon^7}).
                   \end{equation} 
        
For the second integral of the right hand side of \eqref{Second-Part-Lemma-Appr}, we get
                                                         \begin{align*}
                                                        \int_{\p D_i} \psi_l\cdotp (\nu_{x_i} \times\int_{\p D_j}&\nabla_x\Phi_k(z_i,y)
                                                        \times a_j^{[1]}(y) ~ds(y))~ds(x)\\
                                                        &=\int_{\p D_i} \psi_l\cdotp (\nu_{x_i} \times\int_{\p D_j}\nabla_x\left(\Phi_k(z_i,y)-\Phi_k(z_i,z_j)\right)
                                                        \times a_j^{[1]}(y) ~ds(y))~ds(x)\\
                                                        &+\int_{\p D_i} \psi_l\cdotp (\nu_{x_i} \times\nabla_x\Phi_k(z_i,z_j)\times\int_{\p D_j}
                                                        a_j^{[1]}(y) ~ds(y))~ds(x).
                                                        \end{align*} 
                                                        
Using again the Mean-value-theorem as in \eqref{Erreur-Approx-Sys1-Lemma1} for the first integral of the second member,
                                                                    we get\begin{equation}\label{Approxlemma2} 
                                                                                \begin{aligned}
                                                                                \int_{\p D_i} \psi_l\cdotp (\nu_{x_i} \times\int_{\p D_j}\nabla_x\Phi_k(z_i,y)
                                                                                \times a_j^{[1]}(y) ~ds(y))~ds(x)
                                                                                =&O(\frac{1}{\delta_{i,j}}\bigl(\frac{1}{\delta_{i,j}}+\abs{k}\bigr)^2{\epsilon^7})\\
                                                                                 +\int_{\p D_i} \psi_l\cdotp (\nu_{x_i}& \times\nabla_x\Phi_k(z_i,z_j)\times\int_{\p D_j}
                                                                                  a_j^{[1]} ~ds(y))~ds(x).
                                                                                \end{aligned}
                                                                          \end{equation} 
In addition, considering the fact that, for any vectors $a, b, c$ of $\R^3$ we have $a\cdotp(b\times c)=-c\cdotp(b\times a ),$ we write
\footnote{Recall the definition of $\mathcal{A}_j=\int_{\p D_i} (a_j^{[1]}+a_j^{[2]})~ds=\int_{\p D_i} a_j^{[1]}~ds.$}
                                                                                        \begin{align*}
                                                                                         \int_{\p D_i}\hspace{-0,2cm} \psi_l\cdotp (\nu_{x_i} \times\nabla_x\Phi_k(z_i,z_j)\times
                                                                                         {{\int_{\p D_j}\hspace{-0,2cm}a_j^{[1]}~ds}})~ds(x)
                                                                                         &=
                                                                                         \int_{\p D_i}\hspace{-0,2cm} \psi_l\cdotp 
                                                                                         \nu_{x_i} \times\nabla\left((x-z_i)\cdotp\nabla_x\Phi_k(z_i,z_j)\times \mathcal{A}_j\right)~ds(x),\\
                                                                                         &=-\int_{\p D_i}\hspace{-0,2cm} \nu_{x_i} \times\psi_l\cdotp 
                                                                                         \nabla\left((x-z_i)\cdotp\nabla_x\Phi_k(z_i,z_j)\times \mathcal{A}_j\right)~ds(x).
                                                                                        \end{align*} 

Integrating by parts, and considering \eqref{Surface-Divergence-of-psi_l}, we have
 \begin{align*}
                                                                                         \int_{\p D_i} \hspace{-0.2cm}\psi_l\cdotp (\nu_{x_i} \times\nabla_x\Phi_k(z_i,z_j)\times \mathcal{A}_j)~ds(x)
                                                                                         &=\int_{\p D_i}\hspace{-0,2cm} \Div(\nu\times\psi_l) 
                                                                                         \left((x-z_i)\cdotp\nabla_x\Phi_k(z_i,z_j)\times \mathcal{A}_j\right)~ds(x),\\
                                                                                         &=-\int_{\p D_i}\hspace{-0,2cm} \nu\cdotp\curl\psi_l 
                                                                                         \left((x-z_i)\cdotp\nabla_x\Phi_k(z_i,z_j)\times \mathcal{A}_j\right)~ds(x),\\
                                                                                         &=\Bigl(\int_{\p D_i}\hspace{-0,2cm} [\frac{1}{2}I+(\KD{0}{i})^*]^{-1}(\nu_i^l) 
                                                                                         (x-z_i)~ds\Bigr) \cdotp\nabla_x\Phi_k(z_i,z_j)\times \mathcal{A}_j.
                                                                                        \end{align*} 
                                                                                        Replacing in \eqref{Approxlemma2}, summing over $j$ gives the first approximation. For \eqref{First-LSys-Second-Part-Appro}, 
                                                                                        being $a_j^{[2]}=\nu\times\nabla u_j$ we have (see Lemma 5.11 \cite{DMM96}) 
                                                  \begin{align*}
                                                  [M_{ij,_D}^k]\nu\times\nabla u_j=\nu\times\nabla [K_{i,j}^k]u_j -k^2~\nu\times [S_{ij,_D}^k](\nu_y u_j),
                                                  \end{align*} 
                                     then \begin{equation}\label{Transformation2}
                                          \begin{aligned}
                                           \int_{\p D_i} \psi_l\cdotp [M_{ij,_D}^k](a_j^{[2]})~ds
                                           = \psi_l\cdotp(\nu\times\nabla [K_{i,j}^k]u_j -k^2~\nu\times [S_{ij,_D}^k](\nu_y u_j)).
                                          \end{aligned} 
                                          \end{equation}

The first term of the right hand side gives \footnote{Being $\int_{\p D_i}(\nu\cdotp\curl\psi) C=0,$ for any constant vector $C.$}
                                                \begin{equation}\label{Approxlemma3}  
                                                       \begin{aligned}
                                                       \int_{\p D_i} \psi_l\cdotp\nu\times\nabla [K_{i,j}^k]u_j~ds
                                                       &=-\int_{\p D_i}\nu\cdotp\curl\psi(x) \int_{\p D_j}\nu_y\cdotp\nabla_y\Phi_k(x_i,y) u_j(y)~ds(y)~ds(x) ,\\
                                                       &=-\int_{\p D_i}\nu\cdotp\curl\psi(x) \int_{\p D_j}\nu_y\cdotp\nabla_y\left(\Phi_k(x_i,y)-\Phi_k(z_i,y)\right) u_j(y)~ds(y)~ds(x).
                                                       \end{aligned}
                                                \end{equation}       
    
By Taylor formula at the first order, \footnote{Actually $\abs{\int_{0}^{1}D^3\Phi_k(tx+(1-t)z_i,y)dt\circ(x-z_i)~(x-z_i)}\leq 
                                                                                   \frac{C}{\delta_{i,j}}\bigl(\frac{1}{\delta_{i,j}}+\abs{k}\bigr)^3\epsilon^2.$}
                                                           \begin{equation}\label{gradientPhiappro}
                                                                     \begin{aligned}
                                                                     \nabla_y\left(\Phi_k(x_i,y)-\Phi_k(z_i,y)\right)=\nabla_x\nabla_y \Phi_k(z_i,y)(x-z_i) + 
                                                                                                                      O\Bigl(\frac{1}{\delta_{i,j}}(\frac{1}{\delta_{i,j}}+\abs{k})^3\epsilon^2\Bigr),
                                                                     \end{aligned}\end{equation}          
 \eqref{Approxlemma3} gives   \begin{align*}
                                    \int_{\p D_i} \psi_l\cdotp\nu\times\nabla [K_{i,j}^k]u_j~ds=
                                    &-\int_{\p D_i}\nu\cdotp\curl\psi(x) \Bigl(\int_{\p D_j}
                                    \nabla_x\nabla_y \Phi_k(z_i,y)(x-z_i)\cdotp\nu_y u_j(y)~ds(y)\Bigr)~ds(x)\\ 
                                    &+\int_{\p D_i}\nu\cdotp\curl\psi(x)
                                    \int_{\p D_j}\nu_y\cdotp O\Bigl(\frac{1}{\delta_{i,j}}(\frac{1}{\delta_{i,j}}+\abs{k})^3\epsilon^2\Bigr)u_j(y)~ds(y)~ds(x).
                                    \end{align*} 
 
Adding and subtracting $\nabla_x\nabla_y \Phi_k(z_i,z_j)$ under the integral, with the approximation \eqref{gradientPhiappro}, yields 
                                                                                 \begin{align*}
                                                                                 \int_{\p D_i} \psi_l\cdotp\nu\times\nabla [K_{i,j}^k]u_j~ds=
                                                                                  &-\int_{\p D_i}\nu\cdotp\curl\psi(x) \Bigl(\int_{\p D_j}\nabla_x\nabla_y \Phi_k(z_i,z_j)(x-z_i)\cdotp\nu_y u_j(y)~ds(x)\Bigr)~ds(y)\\ 
                                                                                  &+\int_{\p D_i}\nu\cdotp\curl\psi(x)
                                                                                     \int_{\p D_j}2\times\nu_y\cdotp O\Bigl(\frac{1}{\delta_{i,j}}(\frac{1}{\delta_{i,j}}+\abs{k})^3\epsilon^2\Bigr)u_j(y)~ds(y)~ds(x).
                                                                                \end{align*} 

In view of \eqref{Decomposition-Estimate-u-L2Div}, we have \begin{align*}
                                                            \Bigl|\int_{\p D_i}\nu\cdotp\curl\psi(x)
                                                            \int_{\p D_j}\nu_y\cdotp O\Bigl(\frac{1}{\delta_{i,j}}(\frac{1}{\delta_{i,j}}+\abs{k})^3&\epsilon^2\Bigr)u_j(y)~ds(y)~ds(x)\Bigr|\\
                                                            &\leq \norm*{\nu\cdotp\curl\psi}
                                                            \norm*{ \int_{\p D_j}\nu_j\cdotp O\bigl(\frac{1}{\delta_{i,j}}(\frac{1}{\delta_{i,j}}+\abs{k})^3\epsilon^2\bigr)u_j~ds},\\
                                                            &\leq O(\frac{1}{\delta_{i,j}}\bigl(\frac{1}{\delta_{i,j}}+\abs{k}\bigr)^3\epsilon^3)O(\epsilon)\norm*{u_j}O(\epsilon),\\
                                                            &=
                                                            O\bigl(\frac{1}{\delta_{i,j}}(\frac{1}{\delta_{i,j}}+\abs{k})^3\epsilon^7\bigr).
                                                            \end{align*} 
 
It follows, with $\left(\nabla_x\nabla_y \Phi_k(z_i,z_j)\right)^T$ standing for the transpose, 
                                                                     \begin{align*}
                                                                     \int_{\p D_i} \psi_l\cdotp\nu\times\nabla [K_{i,j}^k]u_j~ds=
                                                                     &-\int_{\p D_i}\nu\cdotp\curl\psi(x) (x-z_i) \cdotp
                                                                     \Bigl(\bigl(\nabla_x\nabla_y \Phi_k(z_i,z_j)\bigr)^T\int_{\p D_j}\nu_j u_j~ds \Bigr)~ds(x)~\\ 
                                                                     &+O\Bigl(\frac{1}{\delta_{i,j}}(\frac{1}{\delta_{i,j}}+\abs{k})^3\epsilon^7\Bigr),
                                                                     \end{align*}
       hence, being $\left(\nabla_x\nabla_y \Phi_k(z_i,z_j)\right)^T=\nabla_x\nabla_y \Phi_k(z_i,z_j),$  we get in view of \eqref{Surface-Divergence-of-psi_l} that
                                             \begin{equation}\label{To-Put-together1st-term-lin-sys-appr}
                                                   \begin{aligned}
                                                   \int_{\p D_i} \psi_l\cdotp\nu\times\nabla [K_{i,j}^k]u_j~ds=&O\Bigl(\frac{1}{\delta_{i,j}}(\frac{1}{\delta_{i,j}}+\abs{k})^3\epsilon^7\Bigr)\\
                                                   &-\int_{\p D_i} [\frac{1}{2}I+(\KD{0}{i})^*]^{-1}(-\nu_i^l) 
                                                    (x-z_i)~ds\cdotp\left(\nabla_x\nabla_y \Phi_k(z_i,z_j) \mathcal{B}_j \right).
                                                   \end{aligned}
                                             \end{equation}
Now, consider the second term of \eqref{Transformation2}, \footnote{With the following product rule $u\cdotp(v\times w)=-w\cdotp(v\times u ).$}
                                                                                  \begin{equation} \label{Second-Term-Sys}     
                                                                                   -k^2 \int_{\p D_i} \psi_l\cdotp (\nu\times [S_{ij,_D}^k](\nu_y u_j(y)))~ds
                                                                                   =
                                                                                   k^2\int_{\p D_i} (\nu_{x_i}\times\psi_l(x))\cdotp\int_{\p D_j} \Phi_k(x_i,y)(\nu_y u_j(y))~ds(y)~ds(x),
                                                                                   \end{equation}
                                                                                   we have \begin{align*}
                                                                                           \int_{\p D_j}\Phi_k(x_i,y)(\nu_y u_j(y))~ds(y)=&\int_{\p D_j}\Phi_k(z_i,y)~(\nu_y u_j(y))~ds(y)\\
                                                                                            &+\int_{\p D_j}\int_{0}^{1}\left(\nabla_x\Phi_k(t x +(1-t)z_i,y)dt)\cdotp(x-z_i) \right)(\nu_y u_j(y))~ds(y),
                                                                                           \end{align*}
 and repeating the same approximation in $y$, with the following estimate \begin{align*}
                                                                                       \Bigl|{\int_{\p D_j}\int_{0}^{1}\left(\nabla_x\Phi_k(t x +(1-t)z_i,y)dt)
                                                                                       \cdotp(x-z_i) \right)(\nu_y u_j(y))~ds(y)}\Bigr|
                                                                                       &\leq C\frac{\epsilon^2}{\delta_{i,j}}(\frac{1}{\delta_{i,j}+\abs{k}})\norm*{u_j}_\Ll^2(\p D_i),\\
                                                                                       &\leq C\frac{1}{\delta_{i,j}}\bigl(\frac{1}{\delta_{i,j}}+\abs{k}\bigr)\epsilon^4,
                                                                                       \end{align*}
we get \begin{align*}
                                                            \int_{\p D_j}&\Phi_k(x_i,y)(\nu_y u_j(y))~ds(y)=
                                                            \Phi_k(z_i,z_j) \int_{\p D_j}\nu_j u_j~ds+
                                                            2~O\Bigl( \frac{1}{\delta_{i,j}}(\frac{1}{\delta_{i,j}}+\abs{k})\epsilon^4\Bigr).
                                                            \end{align*} 

Replacing in \eqref{Second-Term-Sys}, gives  

\begin{align*}
                                                                     -k^2 \int_{\p D_i} \psi_l\cdotp (\nu\times [S_{ij,_D}^k](\nu_y u_j))~ds
                                                               &=k^2\int_{\p D_i} (\nu_{x_i}\times\psi_l(x))\cdotp \Bigl(\Phi_k(z_i,z_j)\mathcal{B}_j+
                                                                 O\Bigl( \frac{1}{\delta_{i,j}}(\frac{1}{\delta_{i,j}}+\abs{k})\epsilon^4\Bigr)\Bigr)~ds(x)
                                                                   \end{align*}
Considering the estimate \eqref{psiestim}, we have
 
  \begin{align*}
                                           -k^2 \int_{\p D_i} \psi_l\cdotp (\nu\times [S_{ij,_D}^k](\nu_y u_j))~ds =k^2\int_{\p D_i} (\nu_{x_i}\times\psi_l(x))\cdotp \Phi_k(z_i,z_j)\mathcal{B}_j~ds(x)+
                                             O\Bigl(\frac{1}{\delta_{i,j}}\bigl(\frac{1}{\delta_{i,j}}+\abs{k}\bigr)\epsilon^7\Bigr),
                                           \end{align*} and then, with \eqref{Divrel2} for the second inequality, we derive
                                           \begin{align*}
                                           -k^2 \int_{\p D_i} \psi_l\cdotp& (\nu\times [S_{ij,_D}^k](\nu_y u_j))~ds \\
                                            &=k^2\int_{\p D_i} (\nu_{x_i}\times\psi_l(x))\cdotp \Phi_k(z_i,z_j) \nabla\left((x-z_i)\cdotp \mathcal{B}_j\right)~ds(x)+
                                              O\Bigl(\frac{1}{\delta_{i,j}}\bigl(\frac{1}{\delta_{i,j}}+\abs{k}\bigr)\epsilon^7\Bigr),\\
                                            &=-k^2\int_{\p D_i} \Div(\nu_{x_i}\times\psi_l(x)) \Phi_k(z_i,z_j)
                                                                     \left((x-z_i)\cdotp \mathcal{B}_j\right)~ds(x)+O\Bigl(\frac{1}{\delta_{i,j}}\bigl(\frac{1}{\delta_{i,j}}+\abs{k}\bigr)\epsilon^7\Bigr).
                                           \end{align*}
As consequence, being $\Div(\nu \times\psi)=-\nu\cdotp\curl\psi$, we obtain \begin{equation}\label{putogether}
 \begin{aligned}
-k^2 \int_{\p D_i} \psi_l\cdotp (\nu\times [S_{ij,_D}^k](\nu_y u_j))~ds=
                                                                      &k^2\int_{\p D_i} [\frac{1}{2}I+(\KD{0}{i})^*]^{-1}(-\nu_i^l)(x)~\bigl((x-z_i)\cdotp \Phi_k(z_i,z_j) \mathcal{B}_j\bigr)~ds(x)\\
                                                                      &+O\Bigl(\frac{1}{\delta_{i,j}}\bigl(\frac{1}{\delta_{i,j}}+\abs{k}\bigr)\epsilon^7\Bigr).
                                                                      \end{aligned}
                                      \end{equation} 
                                      
It remain to put together \eqref{To-Put-together1st-term-lin-sys-appr}, \eqref{putogether} and to sum over $j$ to get the conclusion.

Concerning \eqref{incapproximation2}, doing as in \eqref{Second-Term-Sys}
                                       \begin{align*}
                                        \int_{\p D_i}\hspace{-0,2cm}\psi_l(x)\cdotp \nu_{x_i} \times E^\n(x)~ds(x)&=-\int_{\p D_i}\hspace{-0,2cm}\nu_{x_i}\times\psi_l\cdotp  E^\n(z_i)~ds(x) -
                                                                                             \int_{\p D_i}\hspace{-0,2cm}\nu_{x_i}\times\psi_l \cdotp (E^\n(x_i)-E^\n(z_i))~ds(x),\\
                                                                                       =&-\int_{\p D_i}\hspace{-0,2cm}\nu_{x_i}\times\psi_l\cdotp  E^\n(z_i)~ds(x)\\
                                                                                       &+ O(\norm*{\nu_{x_i}\times\psi_l}_{\Ll^2({\p D_i})} \norm*{ (E^\n(x_i)-E^\n(z_i))}_{\Ll^2({\p D_i})}).
                                       \end{align*} 
With the Mean value Theorem, we get         \begin{align*}
                                        \norm*{ (E^\n(x_i)-E^\n(z_i))}_{\Ll^2({\p D_i})}=\norm*{ \int_{0}^{1}\nabla E^\n(t x_i+(1-t)z_i)dt\cdotp(x-z_i)}_{\Ll^2({\p D_i})}
                                                                                        =O(k\epsilon^2), 
                                       \end{align*}
thus, considering \eqref{psiestim}, and \eqref{Divrel2} for the last identity, we end up with
                                        \begin{align*}
                                        \int_{\p D_i}\psi_l(x) \nu_{x_i} \times E^\n(x)~ds(x)&=-\int_{\p D_i}\nu_{x_i}\times\psi_l(x)\cdotp~\nabla\left((x-z_i)\cdotp  E^\n(z_i)\right)~ds(x) + O(k\epsilon^4),\\
                                                                                         &=\int_{\p D_i}-\nu_{x_i}\cdotp\curl\psi_l(x)~\left((x-z_i)\cdotp  E^\n(z_i)\right)~ds(x) + O(\abs{k}\epsilon^4).
                                       \end{align*}
Using \eqref{Surface-Divergence-of-psi_l} gives the conclusion.
\end{proof}
Finally, the approximation for the $\mathcal{B}_i$'s, with $\Vmt$ as defined in \eqref{Virtual-Mass-Tensor},
                                 \begin{align*}
                                  \mathcal{B}_i=\Vmt\sum_{(j\neq i)\geq1}^m&\left( -\nabla_x\Phi_k(z_i,z_j)\times \mathcal{A}_j+\Pi_k(z_i,z_j)\mathcal{B}_j\right)-\Vmt E^\n(z_i) \\
                                  &+O(\sum_{(j\neq i)\geq1}^{m}\frac{1}{\delta_{i,j}}\left((\frac{1}{\delta_{i,j}}+\abs{k})^3+
                                    (\frac{1}{\delta_{i,j}}+\abs{k})^2+(\frac{1}{\delta_{i,j}}+\abs{k})\right)\epsilon^7)\\
                                  &+O((1+\abs{k}+\abs{k}^2\epsilon)~\epsilon^4),
                                  \end{align*} 
holds. It suffices, for $l=1,2,3,$ to replace the approximations of Lemma \ref{Lemme-Def-Psi} and Lemma \ref{secondlappr} in \eqref{First-Linsys-approximation-step1} to conclude. 
Developing the approximation error of the above equation, as pointed in \eqref{Error-Exhaustiv-Expression}, gives \eqref{SYS2}.

\subsubsection{Justification of \eqref{SYS1}}

Let $\phi$ be any smooth enough scalar function. Multiply each side of \eqref{Equation-to-solve} 
 by $\nabla \phi$ and integrate over $\p D_i$ to get, using the relation \eqref{Div-definition}, 
 \begin{equation} \label{aresoudreexp}
 \int_{\p D_i} \phi  \Div~[\frac{1}{2}I+M_{ii,_D}^k]a~ds~+
 \sum_{(j\neq i)\geq1}^m\int_{\p D_i} \phi ~\Div [M_{ij,_D}^k]( a)~ds=-\int_{\p D_i} \phi~\Div(\nu_{x_i}\times E^\n)~ds.
 \end{equation}
As $~curl^2=\curl\curl= - \Delta +\nabla~div,$ we have
        \begin{equation}\label{wheretoreplace}
        \begin{aligned}
        &\int_{\p D_i}\phi\biggl([\frac{1}{2}I-(K_{\p D_i}^k)^*]\Div a-k^2\nu_{x_i}\cdotp [\SD{k}{i}]a\biggr)~ds\\
        &-\sum_{(j\neq i)\geq1}^m\int_{\p D_i}\phi\biggl([(K_{i,j}^k)^*]\Div a+ k^2\nu_{x_i}\cdotp [S_{ij,_D}^k]a \biggr)~ds=-\int_{\p D_i} \phi~\nu_i\cdotp\curl E^\n~ds.
        \end{aligned} 
       \end{equation}
        Let now $\phi$ be the solution to the following integral equation \begin{equation}
[-\frac{1}{2}I+\KD{0}{i}](\phi)(x)=(x-z_i), \label{Definition-Of-phi}
                                                                      \end{equation}
 then, as result of \eqref{estiv1}, $\phi$ satisfies the following estimate
\begin{align}\label{phi-Estimation}
 \norm*{\phi}_{\Ll^2(\p D_i)}\leq C_{\KB{0}{i}} \epsilon^2.
\end{align} 
The tensor  $\Polt$ is defined in \eqref{Polarization-Tensor}. The justification of \eqref{SYS1} is a direct consequence of the following expansions.
 \begin{lemma} \label{lemmainter1}
 With the previous notation we have the following three approximations
               \begin{align}\label{SYS1term1}
               \int_{\p D_i}\phi\biggl([\frac{1}{2}I-(K_{\p D_i}^k)^*]\Div a-&k^2\nu_{x_i}\cdotp [\SD{k}{i}]a\biggr)~ds=\mathcal{A}_i+O((\epsilon+1)\abs{k}^2\epsilon^4).
               \end{align}
               \begin{equation}\label{SYS1term2}
                              \begin{aligned}
                              \int_{\p D_i}\hspace{-0,2cm}\phi\Bigl([(K_{i,j}^k)^*]\Div a+ k^2\nu_{x_i}\cdotp [S_{ij,_D}^k]a \Bigr)~ds
                             =&\Polt \Bigl(\Pi_k(z_i,z_j)\mathcal{A}_j-k^2\nabla_x\Phi_k(z_i,z_j)\times\mathcal{B}_j\Bigr) \\
                              +O\biggl(\Bigl( (\frac{1}{\delta_{i,j}}+\abs{k})^3+&(\frac{1}{\delta_{i,j}}+\abs{k})+
                              \abs{k}^2(\frac{1}{\delta_{i,j}}+\abs{k})^2\Bigr)\frac{{\epsilon}^7}{\delta_{i,j}}\biggr),
                              \end{aligned}
               \end{equation}
               \begin{align}\label{curlEi}
                \int_{\p D_i} \phi~\nu_i\cdotp\curl E^\n~ds=\Polt_{\p D_i} \curl E^\n(z_i)+O(\abs{k}^2\epsilon^4).
                \end{align}
  \end{lemma}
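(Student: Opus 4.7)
All three estimates rest on the same mechanism: Taylor-expand the kernels and the incident field about the centers $z_i,z_j$, transfer operators across the duality pairing against $\phi$, and identify the polarization tensor $\Polt$ as the residue of integrating $\phi$ against $\nu$ by virtue of (\ref{Definition-Of-phi}) and (\ref{Polarization-Tensor}). The decomposition $a_j=a_j^{[1]}+a_j^{[2]}$ with $a_j^{[2]}=\nu\times\nabla u_j$ from \Cref{Decomposition-density-estimate} is essential, since it isolates the component carrying $\mathcal{A}_j$ from the one carrying $\mathcal{B}_j$.

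For (\ref{SYS1term1}), we use the symmetry of $\Phi_k$ to rewrite $\int\phi\,[\tfrac12 I-(K_{ii,_D}^k)^*](\Div a)\,ds = \int[\tfrac12 I-K_{ii,_D}^k](\phi)\,\Div a\,ds$, then split $K_{ii,_D}^k=K_{ii,_D}^0+(K_{ii,_D}^k-K_{ii,_D}^0)$ and invoke (\ref{Definition-Of-phi}) to reduce the leading contribution to $-\int(x-z_i)\,\Div a\,ds=\int a\,ds$ via (\ref{Divrel2}). Because $\Div a_i^{[2]}=0$ forces $\int a_i^{[2]}\,ds=0$ (another application of (\ref{Divrel2})), this equals $\mathcal{A}_i$. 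The $(K_{ii,_D}^k-K_{ii,_D}^0)$ contribution is handled by the Mean-Value-Theorem argument (\ref{Mean-Val-Thm})--(\ref{Norm-M_k-M_0}), and the $k^2\nu\cdotp S_D^k a$ term by (\ref{knusdii}); combined with (\ref{phi-Estimation}) and the a priori bound (\ref{Estimates-Of-The-Density-a}), the remainder is $O((\epsilon+1)\abs{k}^2\epsilon^4)$.

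For (\ref{SYS1term2}), the most intricate step, Taylor-expand $\nabla_x\Phi_k(x,y)$ about $(z_i,z_j)$ to first order in $(x-z_i)$ and in $(y-z_j)$. On the $a_j^{[1]}$ piece, the identity $\int(y-z_j)\Div a_j^{[1]}\,ds=-\mathcal{A}_j$ from (\ref{Divrel2}) extracts the kernel $\nabla_x\nabla_y\Phi_k(z_i,z_j)$, while the $x$-integral $\int\phi(x)\nu_x^T\,ds$ equals $\Polt$ by duality between (\ref{Definition-Of-phi}) and (\ref{Polarization-Tensor}); combining with the leading $k^2$-piece of $k^2\nu\cdotp S_{ij,_D}^k a_j^{[1]}$, namely $k^2\Phi_k(z_i,z_j)\nu_x\cdot\mathcal{A}_j$, and using $\Pi_k=k^2\Phi_k I-\nabla_x\nabla_y\Phi_k$ from (\ref{Dyadic-Green-function}), one assembles $\Polt\,\Pi_k(z_i,z_j)\mathcal{A}_j$. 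On the $a_j^{[2]}=\nu\times\nabla u_j$ piece, $\Div a_j^{[2]}=0$ makes the $(K_{ij}^k)^*$ contribution subleading; for $S_{ij,_D}^k a_j^{[2]}$ we apply the surface integration-by-parts $\int f\,\nu\times\nabla u\,ds=-\int u\,\nu\times\nabla f\,ds$, Taylor-expand $\nabla_y\Phi_k$ at $z_j$, and use $\int u_j\nu\,ds=\mathcal{B}_j$, producing $-\nabla_x\Phi_k(z_i,z_j)\times\mathcal{B}_j$; duality again supplies the prefactor $\Polt$, giving $-k^2\Polt\,\nabla_x\Phi_k(z_i,z_j)\times\mathcal{B}_j$. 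The Taylor remainders are controlled by (\ref{doublgrad}) and the decomposition estimates (\ref{Decomposition-Estimate-a_1-L2})--(\ref{Decomposition-Estimate-u-L2Div}), producing the error in the precise form stated.

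For (\ref{curlEi}), Taylor-expand the plane wave $\curl E^\n(x)=\curl E^\n(z_i)+O(\abs{k}^2\abs{x-z_i})$; the principal integral $\int\phi(x)\,\nu_x\cdot\curl E^\n(z_i)\,ds$ equals $\Polt\,\curl E^\n(z_i)$ by the same duality as above, and (\ref{phi-Estimation}) absorbs the remainder into $O(\abs{k}^2\epsilon^4)$. The main technical obstacle is (\ref{SYS1term2}): the mixed first-order cross terms of the Taylor expansion of $\nabla_x\Phi_k$ must be tracked consistently against the two density components so that the $\nabla_x\nabla_y\Phi_k$ contribution to $\Pi_k$ is recovered without double-counting, while the Taylor remainders must be kept compatible with the factor $\epsilon^7/\delta_{i,j}$ in the stated error.
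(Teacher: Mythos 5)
Your proposal follows the same route as the paper's proof: (i) for \eqref{SYS1term1}, transfer $[\frac{1}{2}I-(\KD{0}{i})^*]$ onto $\phi$ by duality, invoke \eqref{Definition-Of-phi} and \eqref{Divrel2} to reduce the leading term to $\int a~ds=\mathcal{A}_i$, and absorb the $k$-dependent pieces via \eqref{knusdii}, \eqref{Fnormcompact2}, \eqref{phi-Estimation}; (ii) for \eqref{SYS1term2}, Taylor-expand the kernels at $(z_i,z_j)$, use $\int(y-z_j)\Div a~ds=-\mathcal{A}_j$ on the $(K_{ij}^k)^*$ part, split $a_j=a_j^{[1]}+a_j^{[2]}$ on the $S_{ij}^k$ part and integrate by parts against $\nu\times\nabla u_j$, and recognize $\int\phi\nu^T~ds=\Polt$ to assemble $\Pi_k$; (iii) for \eqref{curlEi}, Taylor-expand the incident field. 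This is exactly the paper's argument, with the same intermediate estimates and the same bookkeeping of the decomposition of $a$.
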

\begin{proof} We have for \eqref{SYS1term1}\begin{equation}\label{Sys1-DominatingTerm} \begin{aligned}
                                               \int_{\p D_i}&\phi\biggl([\frac{1}{2}I-(\KD{k}{i})^*]\Div a+ k^2\nu_{x_i}\cdotp [\SD{k}{i}]a\biggr)~ds\\
                                                &=\int_{\p D_i}\phi\biggl([\frac{1}{2}I-(\KD{0}{i})^*]\Div a+[(\KD{0}{i})^*-(\KD{k}{i})^*]\Div a +
                                                k^2\nu_{x_i}\cdotp [\SD{k}{i}]a\biggr)~ds,
\end{aligned}
\end{equation}
Using \eqref{knusdii} and \eqref{Fnormcompact2}, the right-hand side gives 
                                                             \begin{align*}
                                                             &\int_{\p D_i}\phi[\frac{1}{2}I-(\KD{0}{i})^*]\Div a~ds+~ O( \abs{k}^2 \epsilon^2 \norm*{\Div b}_{\Ll^2_0(\p D_i)}\norm*{\phi}_{\Ll^2(\p D_i)} )\\
                                                              &+O( \abs{k}^2\epsilon \norm*{b}_{\Ll^2(\p D_i)}\norm*{\phi}_{\Ll^2(\p D_i)}).      
\end{align*} 
With \eqref{phi-Estimation} and \eqref{Estimates-Of-The-Density-a}, it becomes  
\begin{align*}
\int_{\p D_i}\phi[\frac{1}{2}I-(\KD{0}{i})^*]\Div a~ds+O((\epsilon+1)\abs{k}^2\epsilon^4).
\end{align*} 
Hence as $\int_{\p D_i}\phi[\frac{1}{2}I-(\KD{0}{i})^*]\Div a~ds=\int_{\p D_i}[\frac{1}{2}I-\KD{0}{i}]\phi\Div a~ds,$ with the definition \eqref{Definition-Of-phi} the right-hand side of \eqref{Sys1-DominatingTerm} ends up to be
\begin{align}
 -\int_{\p D_i}(x-z_i)\Div a(x)~ds(x)+O((\epsilon+1)\abs{k}^2\epsilon^4)=\mathcal{A}_i+O((\epsilon+1)\abs{k}^2\epsilon^4).
\end{align}
Concerning \eqref{SYS1term2}, we obtain, after developing the first term of the first member in Taylor series,
\begin{align*}
\int_{\p D_i}\phi~[(K_{i,j}^k)^*](\Div a)~ds=& \int_{\p D_i}\phi(x)~\nu\cdotp\int_{\p D_j}\left( \nabla_y\Phi_k(x,z_j)+\nabla_y\nabla_x\Phi_k(x,z_j)\cdotp(y-z_j)\right)\Div a(y)~ds(y)~ds(x)\\
                                                                  +\int_{\p D_i}\phi(x)~\nu\cdotp\int_{\p D_j}&\left(\int_{0}^{1} D^3_{y,y,x}\Phi_k(x,ty+(1-t)z_j)\circ(y-z_j)\cdotp(y-z_j)\right)\Div a(y)~ds(y)~ds(x),                                
\end{align*}
which gives as we did it in \eqref{gradientPhiappro}\footnote{Note that $\int_{\p D_j}\nabla_x\Phi_k(x,z_j)\Div a=0.$}  
\begin{align*}
\int_{\p D_i}\phi~[(K_{i,j}^k)^*](\Div a)=& \int_{\p D_i}\phi(x)~\nu\cdotp\int_{\p D_j}\left(\nabla_y\nabla_x\Phi_k(x,z_j)\cdotp(y-z_j)\right)\Div a(y)~ds(y)~ds(x)\\
&+O( \frac{1}{\delta_{i,j}}\bigl(\frac{1}{\delta_{i,j}}+\abs{k}\bigr)^3\epsilon^2\epsilon\norm*{\Div a}\epsilon\norm*{\phi}).
\end{align*} 
Repeating the same computations for $x\in\p D_i,$ we get, in consideration of \eqref{phi-Estimation}
\begin{align*}
\int_{\p D_i}\phi~[(K_{i,j}^k)^*](\Div a)&=\int_{\p D_i}\phi(x)~\nu\cdotp\int_{\p D_j}\left(\nabla_y\nabla_x\Phi_k(z_i,z_j)\cdotp(y-z_j)\right)\Div a(y)~ds(y)~ds(x)\\
&+2~O( \frac{1}{\delta_{i,j}}\bigl(\frac{1}{\delta_{i,j}}+\abs{k}\bigr)^3\epsilon^7).
\end{align*} 
With the notation \eqref{Polarization-Tensor}, we get \footnote{Recall that $\int_{\p D_j}(y-z_j)\Div a(y)~ds(y)=-\int_{\p D_j} a(y)~ds(y)=-\mathcal{A}_j.$}
\begin{equation}  \label{1stlemmaint1}
\begin{aligned}
\int_{\p D_i}\phi~[(K_{i,j}^k)^*](\Div a)~ds&=-\Polt \left(\nabla_y\nabla_x\Phi_k(z_i,z_j)\mathcal{A}_j\right)+O( \frac{1}{\delta_{i,j}}\bigl(\frac{1}{\delta_{i,j}}+\abs{k}\bigr)^3\epsilon^7).
\end{aligned}
\end{equation} 
Concerning the second term of the first member of \eqref{SYS1term2}, we have in view of \eqref{Decomposition-Dnsity-a} \cref{Decomposition-density-estimate}
\begin{equation}\label{SYS1term2appro}
\begin{aligned}
\int_{\p D_i}\phi~k^2\nu_i\cdotp [S_{ij,_D}^k](a_j)~ds=\int_{\p D_i}\phi~k^2\nu_i\cdotp [S_{ij,_D}^k]\Bigl(a_j^{[1]}+a_j^{[2]}\Bigr)~ds,\\                                             
\end{aligned}
\end{equation}        
and then 
\begin{align*}
\int_{\p D_i}\phi~k^2\nu_i\cdotp [S_{ij,_D}^k](a_j^{[1]})~ds=& \int_{\p D_i}\phi(x)~k^2\nu_{x_i}\cdotp \int_{\p D_j}\left(\Phi_k(x,y)-\Phi(z_i,z_j)\right)a_j^{[1]}(y)~ds(y)~ds(x)\\
&+\int_{\p D_i}\phi(x)~k^2\nu_{x_i}\cdotp \int_{\p D_j}\Phi_k(z_i,z_j)a_j^{[1]}(y)~ds(y)~ds(x),
\end{align*} 
Using Mean-value-theorem, we get for the right-hand side of the above equation
\begin{align*}
\int_{\p D_i}\phi~k^2\nu_i\cdotp \Bigl(\int_{\p D_j}O\bigl((\frac{1}{\delta_{i,j}}+\abs{k})\frac{1}{\delta_{i,j}}\epsilon\bigr) a_j^{[1]}(y)~ds(y)\Bigr)~ds+ k^2 \Phi_k(z_i,z_j)\int_{\p D_i}\phi~\nu_i\cdotp \Bigl(\int_{\p D_j}a_j^{[1]}(y)~ds(y)\Bigr)~ds,
\end{align*} 
then considering the estimates \eqref{Decomposition-Estimate-a_1-L2} and \eqref{phi-Estimation}, with Holder's inequality give  
\begin{equation}\label{approximationSij1}
\int_{\p D_i}\phi~k^2\nu_i\cdotp [S_{ij,_D}^k] (a_j^{[1]})~ds =
 \Polt k^2 \Phi_k(z_i,z_j)\mathcal{A}_j+O\bigl((\frac{1}{\delta_{i,j}}+\abs{k})\frac{1}{\delta_{i,j}}\epsilon^7\bigr).
\end{equation}
The second term of the second member of \eqref{SYS1term2appro} gives, again considering \eqref{Decomposition-Dnsity-a},
\begin{align*}
\int_{\p D_i}\phi~k^2\nu_i\cdotp [S_{ij,_D}^k]a_j^{[2]}~ds=\int_{\p D_i}\phi(x)~k^2\nu_{x_i}\cdotp \int_{\p D_j}\Phi_k(x,y)\nu\times \nabla u_j ~ds(y)~ds(x),
\end{align*} 
which, by integrating by party, gives 
\begin{align*} 
\int_{\p D_i}\phi~k^2\nu\cdotp [S_{ij,_D}^k]a_j^{[2]}~ds=\int_{\p D_i}\phi(x)~k^2\nu_{x_i}\cdotp \Bigl(-\int_{\p D_j}u_j(y)\nu_{y_i}\times \nabla_y\Phi_k(x,y) ~ds(y) \Bigr)~ds(x).
\end{align*} 
Now, doing a first order approximation, we have
\begin{align*} 
\int_{\p D_i}\phi~k^2\nu\cdotp [S_{ij,_D}^k]a_j^{[2]}~ds=&\int_{\p D_i}\phi(x)~k^2\nu_{x_i}\cdotp \Bigr(-\int_{\p D_j}u_j\nu_j ~ds\Bigl)\times \nabla_y\Phi_k(x,z_j)~ds(x)\\  
&+\int_{\p D_i}\phi(x)~k^2\nu_{x_i}\cdotp \Bigl(-\int_{\p D_j}u_j(y)\nu_{y_i}\times \Bigl(\nabla_y\Phi_k(x,y)-\nabla_y\Phi_k(x,z_j)\Bigr) ~ds(y)\Bigr)~ds(x),   
\end{align*} 
and similarly to \eqref{Erreur-Approx-Sys1-Lemma1} the right-hand side is equal to  
\begin{align*}
\int_{\p D_i}\phi(x)~k^2\nu_{x_i}\cdotp \Bigl(-\int_{\p D_j}u_j\nu_j ~ds\Bigr)\times \nabla_y\Phi_k(x,z_j)~ds(x)\\
+\int_{\p D_i}\phi(x)~k^2\nu_{x_i}&\cdotp \biggl(-\int_{\p D_j}u_j\nu_j\times
O\Bigl(\frac{1}{\delta_{i,j}}(\frac{1}{\delta_{i,j}}
+\abs{k})^2{\epsilon}\Bigr) ~ds\biggr)~ds(x),
\end{align*} 
which, in view of the estimates \eqref{Decomposition-Estimate-u-L2Div} and \eqref{phi-Estimation}, becomes    
\begin{align*}
\int_{\p D_i}\phi(x)~k^2\nu_{x_i}\cdotp \Bigl(-\int_{\p D_j}u_j\nu_j~ds\Bigr)\times \nabla_y\Phi_k(x,z_j)~ds(x)
+O\Bigl(\frac{\abs{k}^2}{\delta_{i,j}}(\frac{1}{\delta_{i,j}}
+\abs{k})^2{\epsilon}^7\Bigr).                                                                                                 
\end{align*} 
Repeating the same calculation, for  $x\in \p D_i,$ gives
                                                                                                \begin{align*}
                                                                                                \int_{\p D_i}\phi~k^2\nu\cdotp [S_{ij,_D}^k]a_j^{[2]}~ds=&
                                                                                                \int_{\p D_i}\phi~k^2\nu_i\cdotp \Bigl(-\int_{\p D_j}u_j\nu_j ~ds\Bigr)\times \nabla_y\Phi_k(z_i,z_j)~ds\\
                                                                                                &+O\Bigl(\frac{\abs{k}^2}{\delta_{i,j}}(\frac{1}{\delta_{i,j}}
                                                                                                  +\abs{k})^2{\epsilon}^7\Bigr).
                                                                                                \end{align*} 

Hence, being $V\times U=-U\times V$ for any vectors $U,V$, and $\nabla_y\Phi_k(x,y)=-\nabla_x\Phi_k(x,y)$ we get with the notations \eqref{Polarization-Tensor}
                                                                                                 \begin{align*}
                                                                                                \int_{\p D_i}\phi~k^2\nu\cdotp &[S_{ij,_D}^k]a_j^{[2]}~ds=
                                                                                                -\Polt k^2\nabla_x\Phi_k(z_i,z_j)\times\mathcal{B}_j +
                                                                                                O\left(\abs{k}^2\frac{1}{\delta_{i,j}}(\frac{1}{\delta_{i,j}}
                                                                                                  +\abs{k})^2{\epsilon}^7\right).
                                                                                                \end{align*}
The last approximation of Lemma \ref{lemmainter1} being obvious, we end the proof of \eqref{SYS1} by taking the sum over $j$ of the two first approximations and replacing in \eqref{wheretoreplace}.
\end{proof}
\section{Invertibility of the linear system}\label{Section-invertibility-algebraic-system}
 For $\mu^+$ and $\mu^-$ defined as in \eqref{mudefinition} and ${\mathcal{E}}=({\mathcal{E}})_{i=1}^{2m}$ defined as
\begin{equation}
{\mathcal{E}}_i= \left\{\begin{aligned} & E^\n(z_i),~i\in\{1,...,m\},\\
                       &\curl E^\n(z_{i-m}),~i\in\{m+1,...,2m\},
\end{aligned}\right.
\end{equation}
 we have the following proposition.
 \begin{proposition}\label{propositioncondinvsysl}
 Under the condition
       \begin{align}
         C_{Li}:=1-C_{Ls}\frac{\mu^+\epsilon^3}{\delta^3}>0 ,
        \end{align} for some constant  $C_{Ls},$ \footnote{The constant $C_{Ls}$ is provided in (\ref{constant-Cls}).}
the following linear system is invertible
\begin{equation}\label{SYSTOT}
 \begin{aligned}
\widehat{\mathcal{B}}_i&=\Vmt\sum_{(j\neq i)\geq1}^m\left(~ -\nabla_x\Phi_k(z_i,z_j)\times \widehat{\mathcal{A}}_j~+ \Pi_k(z_i,z_j)\widehat{\mathcal{B}}_j\right)-\Vmt E^\n(z_i), \\
 \widehat{\mathcal{A}}_i&=-\Polt \sum_{(j\neq i)\geq1}^m \left(\Pi_k(z_i,z_j)\widehat{\mathcal{A}}_j-k^2\nabla\Phi_k(z_i,z_j)\times\widehat{\mathcal{B}}_j\right) -\Polt\curl E^\n(z_i),
\end{aligned}
\end{equation} and the solution satisfies the following estimate 
\begin{equation}\label{Estimate-Solutions-of-LinSys}
 \begin{aligned}
\left(\sum_{i=1}^m\bigl(\abs*{\widehat{\mathcal{A}}_i}^2+\abs*{\widehat{\mathcal{B}}_i}^2\bigr)\right)^\frac{1}{2} :=\left(\bigl<\widehat{\mathcal{B}},\widehat{\mathcal{B}}\bigr>_{\mathbb{C}^{3\times m}}
                                                    +\bigl<\widehat{\mathcal{A}},\widehat{\mathcal{A}}\bigr>_{\mathbb{C}^{3\times m}}\right)^\frac{1}{2}\leq 
                                                   \frac{1}{C_{Li}\mu^-} \epsilon^3\bigl<\widehat{\mathcal{E}},\widehat{\mathcal{E}}\bigr>_{\mathbb{C}^{3\times 2m}}^\frac{1}{2}.
\end{aligned}
\end{equation} 
Further, if the condition \eqref{NeumannSeries-Cond} is satisfied, then the system could be inverted using Neumann series with the following estimate 
\begin{equation}\label{Estimate-Each-Solution-LinSys}
\begin{aligned}
\abs{\mathcal{A}_i}\leq \frac{1}{C_{L^2_i}{\mu^-}} \epsilon^3 \abs{\mathcal{E}_i},\,~
\abs{\mathcal{B}_i}\leq  \frac{1}{C_{L^2_i}{\mu^-}} \epsilon^3 \abs{\mathcal{E}_{i+m}}.
\end{aligned}
\end{equation}
\end{proposition}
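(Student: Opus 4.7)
The plan is to establish coercivity of the block operator underlying \eqref{SYSTOT}, with the positive-definite tensors $-\Polt$ and $\Vmt$ acting as ``mass matrices'' and the terms involving $\Pi_k$ and $\nabla\Phi_k$ acting as the ``interaction'' (stiffness). First, I would multiply the first equation of \eqref{SYSTOT} by $(-\Polt)^{-1}$ and the second by $\Vmt^{-1}$. This is legitimate since, by \eqref{Tensor-Inequalities}, both $-\Polt$ and $\Vmt$ are symmetric positive definite with all eigenvalues in $[\mu^-\epsilon^3,\mu^+\epsilon^3]$, hence their inverses have eigenvalues in $[(\mu^+\epsilon^3)^{-1},(\mu^-\epsilon^3)^{-1}]$. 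The rewritten system then has diagonal part $(-\Polt)^{-1}\widehat{\mathcal{A}}_i$ and $\Vmt^{-1}\widehat{\mathcal{B}}_i$, and symmetric-looking off-diagonal couplings.

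Next, I take the Hermitian pairing of the first rewritten equation with $\overline{\widehat{\mathcal{A}}_i}$, of the second with $\overline{\widehat{\mathcal{B}}_i}$, and sum over $i$. The diagonal contribution is bounded from below by $(\mu^+\epsilon^3)^{-1}\bigl(\langle\widehat{\mathcal{A}},\widehat{\mathcal{A}}\rangle+\langle\widehat{\mathcal{B}},\widehat{\mathcal{B}}\rangle\bigr)$ via \eqref{Tensor-Inequalities}. The source pairing is controlled by $\bigl(\langle\widehat{\mathcal{A}},\widehat{\mathcal{A}}\rangle+\langle\widehat{\mathcal{B}},\widehat{\mathcal{B}}\rangle\bigr)^{1/2}\langle\widehat{\mathcal{E}},\widehat{\mathcal{E}}\rangle^{1/2}$ by Cauchy--Schwarz.

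The main obstacle is the upper bound on the off-diagonal interaction quadratic form by $\frac{C_{Ls}}{\delta^3}\bigl(\langle\widehat{\mathcal{A}},\widehat{\mathcal{A}}\rangle+\langle\widehat{\mathcal{B}},\widehat{\mathcal{B}}\rangle\bigr)$. This is precisely the step announced in the introduction as the reduction to the scalar Helmholtz coercivity inequality. To carry it out, I would expand $\Pi_k(z_i,z_j)=k^2\Phi_k(z_i,z_j)I-\nabla\nabla^T\Phi_k(z_i,z_j)$ from \eqref{Dyadic-Green-function} and treat each scalar sum of the form $\sum_{i\neq j}F(z_i,z_j)\,\alpha_j\cdot\overline{\alpha_i}$ separately. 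For the top-order piece $\nabla\nabla^T\Phi_k$, which has a $\delta_{i,j}^{-3}$ singularity, I would invoke the energy inequality for the Helmholtz single-layer on the unit sphere -- this is where the factor $\|S_{B^1}\|$ in $C_{Ls}$ enters -- combined with the shell-packing counting of \eqref{Exhaustiv-counting} which converts $\sum_{j\neq i}\delta_{i,j}^{-3}$ into $O(\ln m^{1/3}\cdot\delta^{-3})$ (the logarithm absorbed into the $D(\Omega)$ and $|k|$-dependent terms of $C_{Ls}$). The lower-order terms ($\nabla\Phi_k$, $\Phi_k$) are treated analogously but only contribute to the $|k|$- and $D(\Omega)$-dependent prefactors. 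The cross-product coupling $\nabla\Phi_k\times$ between $\widehat{\mathcal{A}}$ and $\widehat{\mathcal{B}}$ is handled by $|a\times b|\le|a||b|$ and then absorbed symmetrically via $2|ab|\le |a|^2+|b|^2$, which is why the factor $C_{Ls}$ multiplies the sum of both norms rather than only one.

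Combining the diagonal lower bound, the interaction upper bound and the source pairing, I obtain
\begin{equation*}
\Bigl(\frac{1}{\mu^+\epsilon^3}-\frac{C_{Ls}}{\delta^3}\Bigr)\bigl(\langle\widehat{\mathcal{A}},\widehat{\mathcal{A}}\rangle+\langle\widehat{\mathcal{B}},\widehat{\mathcal{B}}\rangle\bigr) \leq \bigl(\langle\widehat{\mathcal{A}},\widehat{\mathcal{A}}\rangle+\langle\widehat{\mathcal{B}},\widehat{\mathcal{B}}\rangle\bigr)^{1/2}\langle\widehat{\mathcal{E}},\widehat{\mathcal{E}}\rangle^{1/2}.
\end{equation*}
Recognizing the prefactor as $C_{Li}/(\mu^+\epsilon^3)$, the hypothesis $C_{Li}>0$ yields uniqueness, hence invertibility in the finite-dimensional setting, as well as the a~priori estimate \eqref{Estimate-Solutions-of-LinSys} after using the equivalence of $\mu^+$ and $\mu^-$ via the tensor bounds (the formulation with $\mu^-$ on the right-hand side being obtained by testing the equations directly with $\overline{\widehat{\mathcal{A}}_i}$ and $\overline{\widehat{\mathcal{B}}_i}$ and using the upper bound $|\Polt|\le\mu^+\epsilon^3$, together with the lower bound $(\mu^-\epsilon^3)^{-1}$ on $\Polt^{-1}$). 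Finally, under the strictly stronger condition \eqref{NeumannSeries-Cond}, the off-diagonal operator is a contraction uniformly in $i$, and the Neumann series $\sum_n\bigl(\Polt\cdot(\text{interaction})\bigr)^n$ converges pointwise in each block; reading off the $i$-th component and using $|\Polt|\le\mu^+\epsilon^3$ along with $|\Polt^{-1}|\le(\mu^-\epsilon^3)^{-1}$ yields \eqref{Estimate-Each-Solution-LinSys}.
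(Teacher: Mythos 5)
Your high-level plan is correct and matches the paper's strategy: symmetrize the system using the tensors $-\Polt$ and $\Vmt$, test against the unknowns, and establish coercivity of the resulting quadratic form, pushing the off-diagonal interaction below the diagonal. But the way you handle the dominant interaction term — the one carrying the $|z_i-z_j|^{-3}$ singularity from $\nabla_x\nabla_y\Phi_0(z_i,z_j)$ — is where your argument breaks down, and this is precisely the crux of the proof.

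You propose to bound this top-order piece by combining a single-layer energy inequality with the shell-packing estimate \eqref{Exhaustiv-counting}, and you claim the resulting $\ln m^{1/3}$ factor can be "absorbed into $C_{Ls}$." This is not possible: $C_{Ls}$ in \eqref{constant-Cls} depends only on $|k|$, $D(\Omega)$, and $\norm{S_{B^1}}$ — it carries no $m$-dependence whatsoever, and the invertibility condition $C_{Ls}\mu^+\epsilon^3/\delta^3<1$ is stated to hold uniformly in $m$. Direct shell-packing on the cubic singularity necessarily produces a $\ln m$ that cannot be hidden in this constant, so your estimate would yield a condition of the form $\ln m\cdot(\epsilon/\delta)^3\lesssim 1$ (the condition in Theorem~\ref{Decomposition-density-estimate}), not the $m$-free condition \eqref{general-condition} claimed in the present Proposition.

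The device you are missing is the mean-value theorem for harmonic functions: since $\Phi_0$ is harmonic away from the diagonal, one writes
$\nabla_x\nabla_y\Phi_0(z_i,z_j)=\frac{48^2}{\pi^2\delta^6}\int_{\p B_{\delta/4}(z_i)}\int_{\p B_{\delta/4}(z_j)}\Phi_0(x,y)\nu_x\nu_y^T\,ds(y)\,ds(x)$
(see \eqref{Mean-Value-Theorem-Trasform}). This converts the discrete sum $\sum_{i\neq j}\nabla_x\nabla_y\Phi_0(z_i,z_j)Q_j\widehat{\mathcal C}_j\cdot\overline{Q_i\widehat{\mathcal C}_i}$ into a genuine continuous single-layer bilinear form $\langle S^0_{\p\widehat\Omega}\mathcal U,\mathcal U\rangle_{\Ll^2(\p\widehat\Omega)}$ over the union of small spheres $\p\widehat\Omega=\cup_i\p B_{\delta/4}(z_i)$, minus the sum of self-interaction terms $\sum_i\langle S^0_{\p B_{\delta/4}(z_i)}\mathcal U,\mathcal U\rangle$. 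The positivity $\langle S^0_{\p\widehat\Omega}\phi,\phi\rangle\ge 0$ allows the global term to be dropped from the lower bound with no cost at all (no logarithm, no $m$-dependence), and only the self-interaction on each tiny sphere remains, which scales to $\norm{S_{B^1}}$ times the pointwise quantity — hence the term $\frac{12^2\norm{S_{B^1}}}{\pi}$ in $C_{Ls}$. The shell-packing estimate is then used only on the strictly milder correction terms $\Sigma^k-\Sigma^0$ (which has only a $\delta_{ij}^{-1}$ singularity) and $\varTheta^k$ (which has $\delta_{ij}^{-2}$); those contribute the $|k|$- and $D(\Omega)$-dependent terms of $C_{Ls}$ but no logarithm. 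Without the mean-value reformulation, you cannot reach the $m$-independent condition of the statement.
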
 

To prove this result, we need to introduce some notations. Let $(\widehat{\mathcal{C}}_i)_{i\in\{1,...,2m\}}$ be defined as
\begin{equation*}
\widehat{\mathcal{C}}_i= \left\{\begin{aligned}
&\Vmt^{-1}\widehat{\mathcal{B}}_i,~i\in\{1,...,m\},\\
&-\bigl[\mathcal{P}_{\p D_{i-m}}\bigr]^{-1}\widehat{\mathcal{A}}_{i-m},~i\in\{m+1,...,2m\},
\end{aligned}\right.
\end{equation*} 
and let $\mathcal{Q}$ be the following diagonal Bloc matrix 
\begin{equation}
Q_i:=\left\{\begin{aligned}
 &\Vmt, &&\text{for }i\in\{1,...,m\},\\
 &-\bigl[\mathcal{P}_{\p D_{i-m}}\bigr]&& \text{for }i\in\{m+1,...,2m\},
  \end{aligned}\right.
 \end{equation} 
 with $\mathcal{Q}_{1,1}=Diag(Q_i)_{i=1}^m,$ and $\mathcal{Q}_{2,2}=Diag(Q_{i+m})_{i=1}^m.$
 
 Consider \begin{equation}
         \widehat{\theta}_{i,j}\widehat{\mathcal{C}}_j:=\nabla_x\Phi_k(z_i,z_j)\times \widehat{\mathcal{C}}_j=\left[\begin{aligned}
                                                             &0                         &&-\partial_3\Phi_k(z_i,z_j)&&\partial_2\Phi_k(z_i,z_j)\\
                                                             &\partial_3\Phi_k(z_i,z_j)&&0                         &&-\partial_1\Phi_k(z_i,z_j)\\
                                                             &-\partial_2\Phi_k(z_i,z_j)&&\partial_1\Phi_k(z_i,z_j) &&0
                                                            \end{aligned}\right] \widehat{\mathcal{C}}_j,
            \end{equation}
                                                         
 and define the following bloc matrix \begin{equation*}
                                  \Sigma^k:=\biggl[\begin{aligned}
                                                   &\Sigma^k_{1,1}&&0_{\mathbb{C}^m\times \mathbb{C}^m}\\
                                                   &0_{\mathbb{C}^m\times \mathbb{C}^m}&&\Sigma^k_{2,2}
                                                  \end{aligned}\biggr]=(\sigma_{i,j}^k)_{i,j=1}^{2m},\,~\,~ \varTheta^k:=\biggl[\begin{aligned}
                                                                                                                             &0_{\mathbb{C}^m\times \mathbb{C}^m}&&\varTheta^k_{1,2}\\
                                                                                                                             &\varTheta^k_{2,1}&& 0_{\mathbb{C}^m\times \mathbb{C}^m}
                                                                                                                             \end{aligned}\biggr]=(\theta_{i,j}^k)_{i,j=1}^{2m},   
                                       \end{equation*} where \begin{equation}
                                     {\sigma^k}_{i,j}:= \left\{\begin{aligned}
                                                               &-\Pi_k(z_i,z_j),&&~i\neq j, i,j\in\{1,...,m\},\\
                                                               &-\Pi_k(z_{i-m},z_{j-m}),&&~i\neq j,~i,j\in\{m+1,...,2m\},\\
                                                               &0, &&\text{\em otherwise},
                                                              \end{aligned}\right.
                                                              \end{equation}
                                                          and \begin{equation}
                                     \theta_{i,j}^k= \left\{\begin{aligned}
                                                                &\widehat{\theta}_{i,j-m}, &&~i\in\{1,...,m\},~j\in\{1+m,...,2m\},\\
                                                               &k^2\widehat{\theta}_{i-m,j} &&~j\in\{1,...,m\},~i\in\{1+m,...,2m\},\\
                                                               &0,&&\text{\em otherwise}.
                                                              \end{aligned}\right.
                                                              \end{equation} 
With these notations, solving the system \eqref{SYSTOT} is equivalent to solve the equation
                                                              \begin{align} \label{Linear-System-Transform}
                                                               \widehat{\mathcal{C}}+\Sigma^k\mathcal{Q}\widehat{\mathcal{C}}+\varTheta^k\mathcal{Q}\widehat{\mathcal{C}}=\mathcal{E}.
                                                              \end{align}
                                                              
If we multiply both sides of the last system by $\mathcal{Q}\widehat{\mathcal{C}}$ we get 
                                                     \begin{equation}\label{EstimateSystem}
                                                              \begin{aligned}
                                                              \bigl<\widehat{\mathcal{C}}, \mathcal{Q}\widehat{\mathcal{C}}\bigr>_{\mathbb{C}^{3\times 2m}}+
                                                              \bigl<\Sigma^k\mathcal{Q}\widehat{\mathcal{C}}, \mathcal{Q}\widehat{\mathcal{C}}\bigr>_{\mathbb{C}^{3\times 2m}}+
                                                              \bigl<\varTheta^k\mathcal{Q}\widehat{\mathcal{C}}, \mathcal{Q}\widehat{\mathcal{C}}\bigr>_{\mathbb{C}^{3\times 2m}}
                                                              =\bigl<\mathcal{E}, \mathcal{Q}\widehat{\mathcal{C}}\bigr>_{\mathbb{C}^{3\times 2m}}
                                                              \end{aligned} 
                                                      \end{equation}        
where $\bigl<\cdotp,\cdotp\bigr>_{\mathbb{C}^{3\times 2m}}$ stands for the usual scalar product in $\mathbb{C}^{3\times 2m}$.

 Adding and subtracting $\bigl<\Sigma^0\mathcal{Q}\widehat{\mathcal{C}}, \mathcal{Q}\widehat{\mathcal{C}}\bigr>_{\mathbb{C}^{3\times 2m}}$ gives
                                                        \begin{equation}\label{Systemintermdiare}     
                                                             \begin{aligned}
                                                              \bigl<\widehat{\mathcal{C}}, \mathcal{Q}\widehat{\mathcal{C}}\bigr>_{\mathbb{C}^{3\times 2m}}+
                                                              \bigl<\left(\Sigma^k-\Sigma^0\right)\mathcal{Q}\widehat{\mathcal{C}}, \mathcal{Q}\widehat{\mathcal{C}}\bigr>_{\mathbb{C}^{3\times 2m}}
                                                              +&\bigl<\Sigma^0\mathcal{Q}\widehat{\mathcal{C}}, \mathcal{Q}\widehat{\mathcal{C}}\bigr>_{\mathbb{C}^{3\times 2m}}\\
                                                              +&\bigl<\varTheta^k\mathcal{Q}\widehat{\mathcal{C}}, \mathcal{Q}\widehat{\mathcal{C}}\bigr>_{\mathbb{C}^{3\times 2m}}
                                                              =\bigl<\mathcal{E}, \mathcal{Q}\widehat{\mathcal{C}}\bigr>_{\mathbb{C}^{3\times 2m}},
                                                              \end{aligned}
                                                        \end{equation}
 Let $\chi_{\p \widehat{\Omega}}$ denotes  the characteristic function on $\p \widehat{\Omega}=\cup_{i=1}^m \p B_{\delta/4}(z_i)$ where $B_r(z):=B(z,r)$ denotes a ball of center $z$ and radius $r,$ and 
 $\bigl<\cdotp,\cdotp\bigr>_{\Ll^2(\p \widehat{\Omega})}$ denotes the usual scalar product of $\Ll^2(\p \Omega)$.  

\begin{lemma} For $\mathcal{U}:=\sum_{i=1}^{m}\nu_{xi}\cdotp \overline{Q_i\widehat{\mathcal{C}}_i} \chi_{{\p B_{\delta/4}^{z_i}}}$, and 
$\mathcal{V}:=\sum_{i=m+1}^{2m}\nu_{x_{i-m}}\cdotp \overline{Q_i\widehat{\mathcal{C}}_i} \chi_{{\p B_{\delta/4}^{z_{i-m}}}}$ with $\nu_{xi}$ being 
                           the outward unit normal vector to $\p B_{\delta/4}^{z_i},$ we have
                           \begin{equation}\label{DominatingTerm1}
                           \Sigma^0_{1,1}\mathcal{Q}_{1,1}\widehat{\mathcal{C}}_1\cdotp \mathcal{Q}_{1,1}\widehat{\mathcal{C}}_1=\frac{48^2 }{\pi^2\delta^6}
                                               \Bigl(\bigl<S_{\p \widehat{\Omega}}^0~\mathcal{U},\mathcal{U}\bigr>_{\Ll^2(\p \widehat{\Omega})}-
                                                             \sum_{i=1}^{m}\bigl<S_{{\p B_{\delta/4}^{z_i}}}^0~\mathcal{U},\mathcal{U}\bigr>_{\Ll^2({\p B_{\delta/4}^{z_i}})}\Bigr),
                            \end{equation}
                            \begin{equation}\label{DominatingTerm2}
                           \Sigma^0_{2,2}\mathcal{Q}_{2,2}\widehat{\mathcal{C}}_2\cdotp \mathcal{Q}_{2,2}\widehat{\mathcal{C}}_2=\frac{48^2 }{\pi^2\delta^6}
                                               \Bigl(\bigl<S_{\p \widehat{\Omega}}^0\mathcal{V},\mathcal{V}\bigr>_{\Ll^2(\p \widehat{\Omega})}-
                                                  \hspace{-0,2cm}\sum_{i=m+1}^{2m}\bigl<S_{{\p B_{\delta/4}^{z_{i-m}}}}^0~\mathcal{V},\mathcal{V}\bigr>_{\Ll^2({\p B_{\delta/4}^{z_{i-m}}})}\Bigr),
                            \end{equation}
                            \begin{align}\label{Differestimate}
                            \abs*{\bigl<\left(\Sigma^k-\Sigma^0\right)\mathcal{Q}\widehat{\mathcal{C}}, \mathcal{Q}\widehat{\mathcal{C}}\bigr>_{\mathbb{C}^{3\times 2m}}}\leq
                            \frac{63^\frac{1}{2} \abs{k}^2m^{\frac{2}{3}}}{4\pi\delta}\sum_{i=1}^{2m}\abs*{Q_i\widehat{\mathcal{C}}_i}^2
                            =\frac{63^\frac{1}{2} \abs{k}^2D(\Omega)^{\frac{2}{3}}}{4\pi\delta^3}\bigl<\mathcal{Q}\widehat{\mathcal{C}},\mathcal{Q}\widehat{\mathcal{C}}\bigr>_{\mathbb{C}^{3\times 2m}},
                            \end{align}
                            and
                            \begin{align}\label{Differestimate2}
                             \abs*{\bigl<\varTheta^k\mathcal{Q}\widehat{\mathcal{C}},\mathcal{Q}\widehat{\mathcal{C}}\bigr>_{\mathbb{C}^{3\times 2m}}}
                             \leq 
                             \frac{1}{8\pi} \frac{(1+\abs{k}^2)C_{k,_{D(\Omega)}}}{\delta^3} \bigl<\mathcal{Q}\widehat{\mathcal{C}},\mathcal{Q}\widehat{\mathcal{C}}\bigr>_{\mathbb{C}^{3\times 2m}}.
                            \end{align} where $C_{k,_{D(\Omega)}}:=\max(C_0D(\Omega)^\frac{1}{3}, \abs{k}C_0 \bigl(D(\Omega)^\frac{2}{3}+D(\Omega)^\frac{1}{3}\bigr))$.
                            \end{lemma}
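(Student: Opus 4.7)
The identities (\ref{DominatingTerm1})--(\ref{DominatingTerm2}) will be obtained from a double application of the divergence theorem combined with the mean value property for harmonic functions. For any constant vector $C$ and any point $x$ outside $\overline{B^{z}_{r}}$, writing the surface integral as a volume integral and using the harmonicity of $\nabla_{y}\Phi_{0}(x,\cdot)$ on $B^{z}_{r}$ yields the elementary identity
\[
\int_{\partial B^{z}_{r}}(\nu_{y}\cdot C)\,\Phi_{0}(x,y)\,ds(y) \;=\; -\tfrac{4\pi r^{3}}{3}\,C\cdot\nabla_{x}\Phi_{0}(x,z).
\]
With $r=\delta/4$ and $i\neq j$, so that $|z_{i}-z_{j}|\geq \delta > r$, iterating the same identity on $\partial B^{z_{i}}_{\delta/4}$ applied to the harmonic function $x\mapsto C_{j}\cdot \nabla_{x}\Phi_{0}(x,z_{j})$ produces the symmetric representation
\[
\nabla_{x}\nabla_{x}\Phi_{0}(z_{i},z_{j})(C_{i},C_{j}) \;=\; -\bigl(\tfrac{48}{\pi\delta^{3}}\bigr)^{2}\!\!\int_{\partial B^{z_{i}}_{\delta/4}}\!\!\int_{\partial B^{z_{j}}_{\delta/4}}\!(\nu_{x}\cdot C_{i})(\nu_{y}\cdot C_{j})\Phi_{0}(x,y)\,ds(y)\,ds(x),
\]
the coefficient $(3/(4\pi(\delta/4)^{3}))^{2}$ simplifying to $48^{2}/(\pi^{2}\delta^{6})$. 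Specializing $C_{i}=\overline{Q_{i}\widehat{\mathcal{C}}_{i}}$, $C_{j}=Q_{j}\widehat{\mathcal{C}}_{j}$, and summing over all $i,j$ (noting that $\sigma^{0}_{i,j}=-\nabla_{x}\nabla_{x}\Phi_{0}(z_{i},z_{j})$ off-diagonal and vanishes on the diagonal), the resulting double sum equals $\bigl<S^{0}_{\p \widehat{\Omega}}\mathcal{U},\mathcal{U}\bigr>_{\Ll^{2}(\p \widehat{\Omega})}$ minus its diagonal $\sum_{i}\bigl<S^{0}_{\p B^{z_{i}}_{\delta/4}}\mathcal{U},\mathcal{U}\bigr>$, which is (\ref{DominatingTerm1}). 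Identity (\ref{DominatingTerm2}) is obtained by the same argument with $\mathcal{Q}_{2,2},\widehat{\mathcal{C}}_{2},\mathcal{V}$ in place of $\mathcal{Q}_{1,1},\widehat{\mathcal{C}}_{1},\mathcal{U}$.

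For (\ref{Differestimate}) the plan is to first obtain a pointwise bound $|(\Pi_{k}-\Pi_{0})(z_{i},z_{j})|\leq C\,k^{2}/|z_{i}-z_{j}|$ valid for $|z_{i}-z_{j}|\geq \delta$ in the Rayleigh regime, and then to sum via the shell-packing already used in (\ref{Exhaustiv-counting}). The pointwise bound follows by writing $\Pi_{k}-\Pi_{0} = k^{2}\Phi_{k} I + \nabla_{x}\nabla_{x}(\Phi_{k}-\Phi_{0})$ and expanding $\Phi_{k}-\Phi_{0}=(4\pi)^{-1}\sum_{n\geq 1}(ik)^{n}|x-y|^{n-1}/n!$, which is analytic in $|x-y|$: the $1/|x-y|$ singularity of $\Phi_{k}$ cancels between the two terms, the Hessian of the $n=2$ term $-k^{2}|x-y|/(8\pi)$ contributes the leading $O(k^{2}/|x-y|)$ piece through $\partial_{l}\partial_{m}|x-y|=\delta_{lm}/|x-y|-(x-y)_{l}(x-y)_{m}/|x-y|^{3}$, and all higher-order terms give $|k|^{3}$ (and higher) times bounded spatial factors. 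Combined with $|k^{2}\Phi_{k}(z_{i},z_{j})|\leq k^{2}/(4\pi|z_{i}-z_{j}|)$ this yields the pointwise estimate. The symmetric Cauchy--Schwarz inequality $|a_{ij}u_{i}u_{j}|\leq \tfrac{1}{2}|a_{ij}|(u_{i}^{2}+u_{j}^{2})$ together with the shell bound $\sup_{i}\sum_{j\neq i}1/|z_{i}-z_{j}|\leq C\,m^{2/3}/\delta$, the $p=1$ case of the computation leading to (\ref{Exhaustiv-counting}), concludes the estimate; the final rewriting with $D(\Omega)^{2/3}/\delta^{3}$ uses a volume packing inequality relating $m$ and $D(\Omega)/\delta$.

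For (\ref{Differestimate2}), the off-diagonal blocks of $\varTheta^{k}$ act on $\widehat{\mathcal{C}}_{j}$ as the cross product with $\nabla_{x}\Phi_{k}(z_{i},z_{j})$ (respectively $k^{2}\nabla_{x}\Phi_{k}$), so that $|\theta^{k}_{i,j}\widehat{\mathcal{C}}_{j}|\leq (1+k^{2})|\nabla_{x}\Phi_{k}(z_{i},z_{j})|\,|\widehat{\mathcal{C}}_{j}|$. The classical gradient bound $|\nabla_{x}\Phi_{k}(z_{i},z_{j})|\leq (4\pi)^{-1}(|z_{i}-z_{j}|^{-2}+|k|\,|z_{i}-z_{j}|^{-1})$ then reduces the control of the bilinear form to the two shell sums $\sup_{i}\sum_{j\neq i}|z_{i}-z_{j}|^{-2}\leq C\,m^{1/3}/\delta^{2}$ and $\sup_{i}\sum_{j\neq i}|z_{i}-z_{j}|^{-1}\leq C\,m^{2/3}/\delta$, both consequences of (\ref{Exhaustiv-counting}) (with shell volumes $\leq 2^{6}l^{2}$). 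Extracting a common $\delta^{-3}$ factor and using the packing inequality to express the two shell sums in terms of appropriate powers of $D(\Omega)$ produces the claimed form $(1+k^{2})C_{k,D(\Omega)}/\delta^{3}$.

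The main technical obstacle is the pointwise Taylor analysis in the proof of (\ref{Differestimate}): one must verify term by term that, after the $1/|x-y|$ singularity of $\Phi_{k}$ cancels between $\Pi_{k}$ and $\Pi_{0}$, the surviving $1/|x-y|$ Hessian singularity of $\nabla_{x}\nabla_{x}(\Phi_{k}-\Phi_{0})$ appears \emph{only} at order $k^{2}$, with all other contributions subleading in the Rayleigh regime. Once this is established, the remainder is routine, and the precise numerical constants ($48$, $63^{1/2}$, $2^{6}$) arise from the volume formula $|B_{r}|=\tfrac{4}{3}\pi r^{3}$, the normalization $(4\pi)^{-1}$ of the Green's function, and the shell bound $m_{l}\leq 2^{6}l^{2}$ from the proof of \Cref{Decomposition-density-estimate}.
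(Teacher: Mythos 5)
Your proof of \eqref{DominatingTerm1}--\eqref{DominatingTerm2} is essentially the paper's: the single integral identity you state (divergence theorem on a ball converting $\int_{\p B^z_r}(\nu\cdot C)\Phi_0(x,\cdot)$ into a volume average of the harmonic $\nabla_y\Phi_0(x,\cdot)$, then mean value) is exactly what the paper does by applying the mean value property to $\nabla_x\nabla_y\Phi_0(z_i,\cdot)$ and then Gauss' theorem, once in each variable, giving the same $48^2/(\pi^2\delta^6)$ factor and the same "full form minus diagonal" decomposition. The shell-packing in \eqref{Differestimate2} is also the same.

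For \eqref{Differestimate}, however, your Taylor-series route to the pointwise bound is weaker than the paper's. Expanding $\Phi_k-\Phi_0=\frac{1}{4\pi}\sum_{n\geq 1}\frac{(ik)^n r^{n-1}}{n!}$ and taking the Hessian, the $n=2$ term does give the $O(|k|^2/r)$ piece, but the tail $\sum_{n\geq 3}$ contributes $O(|k|^3 e^{|k|r})$: this is only dominated by $|k|^2/r$ when $|k|r\lesssim 1$, i.e.\ in a strict Rayleigh window relative to the inter-particle distances (eventually $|k|\,D(\Omega)\lesssim 1$), which is \emph{not} an assumption of the stated lemma. The paper instead represents $\Phi_k-\Phi_0$ as a Taylor remainder with an integral kernel and integrates the apparent $(ik)^3$ term by parts, which converts that factor back to $(ik)^2$ with a compensating $1/r$: this yields the unconditional estimate $\abs{\Pi_k(z_i,z_j)-\Pi_0(z_i,z_j)}\leq 6\abs{k}^2/(4\pi\delta_{i,j})$ without any smallness assumption on $|k|\delta_{i,j}$. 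Your plan should replace the crude power-series bound by this integration-by-parts step (or otherwise exhibit the cancellation of the $k^3$ contribution); as it stands, the claim that the remainder is "subleading" is a genuine, if small, gap.

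Finally, a cosmetic remark: for the bilinear estimate in \eqref{Differestimate} you propose the Schur/symmetric Cauchy--Schwarz bound (estimating $\sup_i\sum_{j\neq i}|a_{ij}|$), whereas the paper applies H\"older twice, pulling out $\bigl(\sum_{i,j}|a_{ij}|^2\bigr)^{1/2}$. Both produce the $m^{2/3}/\delta$ scaling, but the numerical constant $63^{1/2}$ in \eqref{Differestimate} is the one arising from the paper's H\"older route and the $m_l\leq 7l^2$ shell count; the Schur argument gives a different (larger) constant, so you would have to adjust that constant or switch to the paper's method to reproduce the inequality exactly as stated.
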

                            
\begin{proof} To prove \eqref{DominatingTerm1}, using Mean-value-theorem for harmonic function, for $j\neq i$, we have
         \begin{align*}
          \nabla_x\nabla_y \Phi_0(z_i,z_j)=\frac{3\times 4^3 }{4\pi\delta^3}\int_{B(z_j,\frac{\delta}{4})}\nabla_x\nabla_y \Phi_0(z_i,y)dy
                                            =\frac{48 }{\pi\delta^3}\nabla_x\int_{ B(z_j,\frac{\delta}{4})} \nabla_y\Phi_0(z_i,y)~dy
         \end{align*} 
then using Gauss divergence theorem,
                                  \begin{align*}
                                  \nabla_x\nabla_y \Phi_0(z_i,z_j)=\frac{48 }{\pi\delta^3}\nabla_x\int_{\p B_{\delta/4}^{z_i}} \Phi_0(z_i,y)\nu_y~ds(y).
                                  \end{align*} 
Repeating the same for $z_i$ we get \begin{align}\label{Mean-Value-Theorem-Trasform}
                                    \nabla_x\nabla_y \Phi_0(z_i,z_j)=\frac{48^2 }{\pi^2\delta^6}\int_{\p B_{\delta/4}^{z_i}}\int_{\p B_{\delta/4}^{z_i}} \Phi_0(x,y)\nu_x\nu_y^T~ds(y)~ds(x),
                                    \end{align} 
 and from $\bigl<\Sigma^0_{1,1}\mathcal{Q}_{1,1}\widehat{\mathcal{C}}_1,\mathcal{Q}_{1,1}\widehat{\mathcal{C}}_1\bigr>_{\mathbb{C}^{3\times m}}=
 \sum_{i=1}^{m}\sum_{1\leq j\neq i}^{m}(\nabla_x\nabla_y \Phi_0(z_i,z_j)Q_j\widehat{\mathcal{C}}_j)\cdotp \overline{Q_i\widehat{\mathcal{C}}_i},$
using \eqref{Mean-Value-Theorem-Trasform}, we get 
                                 \begin{equation}\label{Linearsystsig0}
                                \begin{aligned}
                                \bigl<\Sigma^0_{1,1}\mathcal{Q}_{1,1}\widehat{\mathcal{C}}_1,\mathcal{Q}_{1,1}\widehat{\mathcal{C}}_1\bigr>=
                                \frac{48^2 }{\pi^2\delta^6}\sum_{i=1}^{m}\Bigl(\sum_{1\leq j\neq i}^{m} \int_{\p B_{\delta/4}^{z_i}}\int_{\p B_{\delta/4}^{z_i}} 
                                \hspace{-0,3cm}\Phi_0(x,y)\nu_x\nu_y^T~ds(y)~ds(x)~Q_j\widehat{\mathcal{C}}_j\Bigr)\cdotp \overline{Q_i\widehat{\mathcal{C}}_i},\\
                                =
                                \frac{48^2 }{\pi^2\delta^6}\sum_{i=1}^{m} \int_{\p B_{\delta/4}^{z_i}}\Bigl(\sum_{1\leq j\neq i}^{m}\int_{\p B_{\delta/4}^{z_i}} 
                                \Phi_0(x,y)\nu_y\cdotp Q_j\widehat{\mathcal{C}}_j~ds(y)\Bigr)\nu_x\cdotp \overline{Q_i\widehat{\mathcal{C}}_i}~ds(x).\\
                               \end{aligned}
                              \end{equation} 
                              
Adding and subtracting $\sum_{i=1}^{m}\int_{\p B_{\delta/4}^{z_i}}\int_{\p B_{\delta/4}^{z_i}} 
                                \Phi_0(x,y)\nu_y\cdotp Q_j\widehat{\mathcal{C}}_i~ds(y)\nu_x\cdotp \overline{Q_i\widehat{\mathcal{C}}_i}~ds(x),$ gives 
                                                                                 \begin{equation}
                                                                                     \begin{aligned}
                                                                                     \bigl<\Sigma^0_{1,1}\mathcal{Q}_{1,1}\widehat{\mathcal{C}}_1,\mathcal{Q}_{1,1}\widehat{\mathcal{C}}_1\bigr>
                                                                                     =&\frac{48^2 }{\pi^2\delta^6}\sum_{i=1}^{m} \int_{\p B_{\delta/4}^{z_i}}\Bigl(\sum_{j=1}^{m}\int_{\p B_{\delta/4}^{z_i}}
                                                                                     \Phi_0(x,y)\nu_y\cdotp Q_j\widehat{\mathcal{C}}_j~ds(y)\Bigr)\nu_x\cdotp \overline{Q_i\widehat{\mathcal{C}}_i}ds(x),\\
                                                                                     &-\frac{48^2 }{\pi^2\delta^6} \sum_{i=1}^{m}\int_{\p B_{\delta/4}^{z_i}}\int_{\p B_{\delta/4}^{z_i}} 
                                                                                     \Phi_0(x,y)\nu_y\cdotp Q_j\widehat{\mathcal{C}}_i~ds(y)\nu_x\cdotp \overline{Q_i\widehat{\mathcal{C}}_i}ds(x),
                                                                                     \end{aligned}
                                                                                 \end{equation}
                                                                                 
and then \eqref{Linearsystsig0} becomes, \begin{align*}
                                 \bigl<\Sigma^0_{1,1}\mathcal{Q}_{1,1}\widehat{\mathcal{C}}_1,\mathcal{Q}_{1,1}\widehat{\mathcal{C}}_1\bigr>_{\mathbb{C}^{3\times m}} 
                                 =& 
                                 \frac{48^2 }{\pi^2\delta^6}\int_{\p \widehat{\Omega}:=\cup_{i=1}^{2m} \p B_{\delta/4}^{z_i}}\int_{\cup_{j=1}^{2m} \p B_{\delta/4}^{z_i}} 
                                 \Phi_0(x,y)\mathcal{U}(y)~ds(y)~\overline{\mathcal{U}}(x)~ds(x)\\
                                 &-\frac{48^2 }{\pi^2\delta^6} \sum_{i=1}^{2m}\int_{\p B_{\delta/4}^{z_i}}\int_{\p B_{\delta/4}^{z_i}} 
                                 \Phi_0(x,y)\nu_y\cdotp Q_j\widehat{\mathcal{C}}_i~ds(y)\nu_x\cdotp \overline{Q_i\widehat{\mathcal{C}}_i}~ds(x).
                                 \end{align*} 
The same arguments remain valid for \eqref{DominatingTerm2}.\\
Concerning \eqref{Differestimate}, we have \begin{align}\label{Mean-Vvalue-Double-Gradient}
                                           \abs{(-k^2\Phi_k(z_i,z_j)I+\nabla_x\nabla_y\Phi_k(z_i,z_j)  -\nabla_x\nabla_y\Phi_0(z_i,z_j))}\leq \frac{6\abs{k}^2}{4\pi \delta_{i,j}}.
                                          \end{align} 
              
Indeed, recalling \eqref{Mean-Val-Thm} and \eqref{meanvthe}  
                                                                                     \begin{align*}
                                                                                      \nabla_y(\Phi_k(x,y)-\Phi_0(x,y))= \frac{-(ik)^2}{4\pi}\int_{0}^{1}\frac{le^{ikl\abs{x-y}}}{\abs{x-y}}(x-y)dl
                                                                                     \end{align*} 
then we have also \begin{align*}
                                                                                 \nabla_x\nabla_y(\Phi_k(x,y)-\Phi_0(x,y))
                                                                                 =\frac{-(ik)^3}{4\pi}\int_{0}^{1}l^2e^{ikl\abs{x-y}}dl\frac{(x-y)(x-y)^T}{\abs{x-y}^2}
                                                                                 &\\
                                                                                 &\hspace{-6cm}\frac{-(ik)^2}{4\pi}\int_{0}^{1}le^{ikl\abs{x-y}}dl
                                                                                 \left(\frac{I}{\abs{x-y}}+\frac{(x-y)(x-y)^T}{\abs{x-y}^3}\right).
                                                                                 \end{align*}

Integrating by part the first term of the right-hand side, gives  
                              \begin{equation} 
                                   \begin{aligned}
                                   \nabla_x\nabla_y(\Phi_k(x,y)-\Phi_0(x,y))
                                   =&\Bigl(k^2\Phi_k(x,y)-\frac{k^2}{4\pi}\int_{0}^{1}2l \frac{e^{ikl\abs{x-y}}}{\abs{x-y}}dl\Bigr)\frac{(x-y)(x-y)^T}{\abs{x-y}^2}\\
                                   &+\frac{-(ik)^2}{4\pi}\int_{0}^{1}le^{ikl\abs{x-y}}dl
                                   \Bigl(\frac{I}{\abs{x-y}}+\frac{(x-y)(x-y)^T}{\abs{x-y}^3}\Bigr).
                                   \end{aligned}
                                \end{equation}

Hence, we get\begin{align*}
                           \abs{\Pi_k(z_i,z_j)-\Pi_0(z_i,z_j)}
                           &= \abs{k^2\Phi_k(z_i,z_j)I-\nabla_x\nabla_y\Phi_k(z_i,z_j)+\nabla_x\nabla_y\Phi_0(z_i,z_j)},\\
                           &\leq \abs{k^2\Phi_k(z_i,z_j)}+\abs{\nabla_x\nabla_y\Phi_k(z_i,z_j)-\nabla_x\nabla_y\Phi_0(z_i,z_j)}\leq \frac{6\abs{k}^2}{4\pi \delta_{i,j}}.
                           \end{align*}
 Now, as \begin{align}
            \bigl<\Bigl(\Sigma^k_{1,1}-\Sigma^0_{1,1}\Bigr)\mathcal{Q}_{1,1}\widehat{\mathcal{C}}_1, \mathcal{Q}_{1,1}\widehat{\mathcal{C}}_1\bigr>_{\mathbb{C}^{3\times m}}
            =\sum_{i=1}^{m}\Bigl(\sum_{1\leq j\neq i}^{m}(\Pi_k(z_i,z_j)-\Pi_0(z_i,z_j))Q_j\widehat{\mathcal{C}}_j\Bigr)\cdotp \overline{Q_i\widehat{\mathcal{C}}_i}
           \end{align} 
 using Holder's inequality, for the inner sum, we get 
                                                      \begin{align*}
                                                      \abs{\bigl<\Bigl(\Sigma^k_{1,1}-\Sigma^0_{1,1}\Bigr)\mathcal{Q}_{1,1}\widehat{\mathcal{C}}_1, \mathcal{Q}_{1,1}\widehat{\mathcal{C}}_1\bigr>}
                                                      \leq\sum_{i=1}^{m} \Bigl(\sum_{1\leq j\neq i}^{m}\abs{\Pi_k(z_i,z_j)-\Pi_0(z_i,z_j)}^2\Bigr)^\frac{1}{2}
                                                      \Bigl(\sum_{1\leq j\neq i}^{m}\abs{Q_j\widehat{\mathcal{C}}_j}^2\Bigr)^\frac{1}{2} \abs{Q_i\widehat{\mathcal{C}}_i},
                                                      \end{align*}
 which gives in view of \eqref{Mean-Vvalue-Double-Gradient} \begin{align*}
                                                      \abs{\bigl<\Bigl(\Sigma^k_{1,1}-\Sigma^0_{1,1}\Bigr)\mathcal{Q}_{1,1}\widehat{\mathcal{C}}_1, \mathcal{Q}_{1,1}\widehat{\mathcal{C}}_1\bigr>_{\mathbb{C}^{3\times m}}}
                                                     \leq \sum_{i=1}^{m} \bigl(\sum_{1\leq j\neq i}^{m}(\frac{3\abs{k}^2}{2\pi \delta_{i,j}})^2\bigr)^\frac{1}{2}
                                                      \Bigl(\sum_{j=1}^{m}\abs{Q_j\widehat{\mathcal{C}}_j}^2\Bigr)^\frac{1}{2} \abs{Q_i\widehat{\mathcal{C}}_i}.
                                                     \end{align*} 
Repeating Holder's inequality for the outer sum, we obtain

\begin{equation}\label{suiteestim}\begin{aligned}
                                                         \abs{\bigl<\Bigl(\Sigma^k_{1,1}-\Sigma^0_{1,1}\Bigr)\mathcal{Q}_{1,1}\widehat{\mathcal{C}}_1, \mathcal{Q}_{1,1}\widehat{\mathcal{C}}_1\bigr>_{\mathbb{C}^{3\times m}}}
                                                      \leq&\Bigl(\sum_{i=1}^{m} \sum_{1\leq j\neq i}^{m}( \frac{3\abs{k}^2}{2\pi \delta_{i,j}})^2
                                                      \sum_{j=1}^{m}\abs{Q_j\widehat{\mathcal{C}}_j}^2\Bigr)^\frac{1}{2} \Bigl(\sum_{i=1}^{m}\abs{Q_i\widehat{\mathcal{C}}_i}^2\Bigr)^\frac{1}{2},\\
                                                      \leq& \biggl(\sum_{i=1}^{m} \sum_{1\leq j\neq i}^{m}( \frac{6\abs{k}^2}{4\pi \delta_{i,j}})^2\biggr)^\frac{1}{2} 
\Bigl(\sum_{j=1}^{m}\abs{Q_j\widehat{\mathcal{C}}_j}^2\Bigr).
                                                      \end{aligned} \end{equation} 
                                                      
The inner sum gives, as we did it in \eqref{Exhaustiv-counting},~ 
                                                                       $$\sum_{1\leq j\neq i}^{m}(\frac{6\abs{k}^2}{4\pi \delta_{i,j}})^2
                                                                       \leq\sum_{l= 2}^{m^\frac{1}{3}} 7l^2 ~\frac{3^2\abs{k}^4}{4^2\pi^2 l^2\delta^2}
                                                                       = 7 m^\frac{1}{3}\frac{3^2\abs{k}^4}{4^2\pi^2\delta^2},$$
                                                                                      and then
                                                                                       \begin{equation*}
                                                                                                \bigl(\sum_{i=1}^{m} \sum_{1\leq j\neq i}^{m}(\frac{3\abs{k}^2}{2\pi \delta_{i,j}} )^2\bigr)^\frac{1}{2}
                                                                                                \leq m^\frac{1}{2}\bigl(7 m^\frac{1}{3}\frac{3^2\abs{k}^4}{4^2\pi^2\delta^2}\bigr)^\frac{1}{2}
                                                                                                \leq \frac{63^\frac{1}{2}\abs{k}^2m^{\frac{2}{3}}}{4\pi\delta}.
                                                                                                \end{equation*}
                                                                                                
Repeating the same calculation for $\abs{\bigl<\left(\Sigma^k_{2,2}-\Sigma^0_{2,2}\right)\mathcal{Q}_{2,2}\widehat{\mathcal{C}}_2, \mathcal{Q}_{2,2}\widehat{\mathcal{C}}_2\bigr>_{\mathbb{C}^{3\times m}}}$ 
leads to the conclusion.
For the last assertion (\ref{Differestimate2}), we proceed as follows:

\begin{equation}\label{Teta-k-estimates}
                        \begin{aligned}
                        \bigl<\varTheta^k\mathcal{Q}\widehat{\mathcal{C}},\mathcal{Q}\widehat{\mathcal{C}}\bigr>_{\mathbb{C}^{3\times 2m}}
                        =& 
                        \sum_{i=1}^m\bigl(\sum_{j\neq i}^m \nabla \Phi_k(z_i,z_j)\times Q_{j+m}\widehat{\mathcal{C}}_{j+m}\bigr)\cdotp \overline{Q_i\widehat{\mathcal{C}}_i}\\
                        &+\sum_{i=1}^m\bigl(\sum_{j\neq i}^m k^2\nabla \Phi_k(z_{i},z_{j}) Q_{j}\widehat{\mathcal{C}}_{j}\bigr)\cdotp \overline{Q}_{i+m}\overline{\widehat{\mathcal{C\,}}}_{i+m},
                             \end{aligned}
                             \end{equation}
                             
The first term of the right hand side of \eqref{Teta-k-estimates} is smaller then
                                               \begin{align*}
                                               \frac{1}{4\pi}\sum_{i=1}^m \sum_{j\neq i}^m   \frac{\abs*{Q_{j+m}\widehat{\mathcal{C}}_{j+m}}}{\delta_{i,j}}(\frac{1}{\delta_{i,j}}+\abs{k}) \abs*{ Q_i\widehat{\mathcal{C}}_i},
                                               \end{align*} 
which is,
                                              \begin{align*}
                                               \frac{1}{4\pi}\sum_{i=1}^m \sum_{j\neq i}^m   \frac{\abs*{Q_{j+m}\widehat{\mathcal{C}}_{j+m}}}{\delta_{i,j}}\frac{ \abs*{ Q_i\widehat{\mathcal{C}}_i}}{\delta_{i,j}}+
                                               \frac{1}{4\pi}\sum_{i=1}^m \sum_{j\neq i}^m   
                                                                                \frac{\abs{k}^\frac{1}{2}\abs*{Q_{j+m}\widehat{\mathcal{C}}_{j+m}}}{\delta_{i,j}^\frac{1}{2}} 
                                                                                \frac{\abs{k}^\frac{1}{2}\abs*{ Q_i\widehat{\mathcal{C}}_i}}{\delta_{i,j}^\frac{1}{2}},
                                              \end{align*}
and do not exceed \footnote{Comes from $2ab\leq a^2+b^2$ for every real numbers $a$, $b$.}
                                              \begin{align*}
                                               \frac{1}{8\pi}\sum_{i=1}^m \sum_{j\neq i}^m \Bigl(\frac{\abs*{Q_{j+m}\widehat{\mathcal{C}}_{j+m}}^2}{\delta_{i,j}^2}+
                                                                                                  \frac{ \abs*{ Q_i\widehat{\mathcal{C}}_i}^2}{\delta_{i,j}^2}\Bigr)+
                                               \frac{1}{8\pi}\sum_{i=1}^m \sum_{j\neq i}^m  \Bigl(\frac{\abs{k}\abs*{Q_{j+m}\widehat{\mathcal{C}}_{j+m}}^2}{\delta_{i,j}} 
                                                                                                  +\frac{\abs{k}\abs*{ Q_i\widehat{\mathcal{C}}_i}^2}{\delta_{i,j}}\Bigr),
                                              \end{align*}   
which in its turn, is not greater than\footnote{being $\sum_{i=1}^m \sum_{j\neq i}^m a_{i,j}=\sum_{j=1}^m \sum_{i\neq j }^m a_{i,j},$ for every real numbers $a_{i,j} $.} 
                                              \begin{align*}
                                              \frac{1}{8\pi}\sum_{j=1}^m \abs*{Q_{j+m}\widehat{\mathcal{C}}_{j+m}}^2 \sum_{i\neq j}^m\frac{1}{\delta_{i,j}^2} +
                                                                                               \frac{1}{8\pi}\sum_{i=1}^m \abs*{ Q_i\widehat{\mathcal{C}}_i}^2 \sum_{j\neq i}^m\frac{1}{\delta_{i,j}^2}\\
                                               +\frac{\abs{k}}{8\pi}\sum_{j=1}^m \abs*{Q_{j+m}\widehat{\mathcal{C}}_{j+m}}^2 \sum_{i\neq j}^m\frac{1}{\delta_{i,j}} +
                                                                                                \frac{\abs{k}}{8\pi} \sum_{i=1}^m \abs*{ Q_i\widehat{\mathcal{C}}_i}^2 \sum_{j\neq i}^m\frac{1}{\delta_{i,j}}.
                                              \end{align*}
We have, as done in \eqref{Exhaustiv-counting}, for $m\leq{D(\Omega)}/{\delta^3}$,
                                       \begin{align*}
                                        \sum_{j\neq i}^m  
                                                \frac{1}{\delta_{i,j}^2}=\sum_{l=2}^{({m}/{2})^\frac{1}{3}}\frac{C_0l^2}{l^2\delta^2}
                                                =\frac{C_0{(\frac{m}{2})^\frac{1}{3}}}{\delta^2}\leq \frac{C_0 D(\Omega)^\frac{1}{3}}{\delta^3}=\frac{C_{_{D(\Omega)}}^{(1)}}{\delta^3}.
                                       \end{align*} 
We get, in a similar way,
                                       \begin{align*}
                                        \sum_{j\neq i}^m  
                                                \frac{1}{\delta_{i,j}}=\sum_{l=2}^{({m}/{2})^\frac{1}{3}} 
                                                \frac{C_0l^2}{l\delta^2}
                                                =\frac{C_0{(\frac{m}{2})^\frac{1}{3}\bigl((\frac{m}{2})^\frac{1}{3}+1\bigr)}}{2\delta}\leq 
                                                \frac{C_0 \bigl(D(\Omega)^\frac{2}{3}+D(\Omega)^\frac{1}{3}\bigr)}{2\delta^3}=\frac{C_{_{D(\Omega)}}^{(2)}}{\delta^3}.   
                                       \end{align*} 
                                       
Then, for $C_{k,_{D(\Omega)}}:=\max(C_{_{D(\Omega)}}^{(1)},\abs{k}C_{_{D(\Omega)}}^{(2)})$, we obtain
                                     \begin{align*}
                                        \biggl|{\sum_{i=1}^m\sum_{j\neq i}^m \nabla \Phi_k(z_i,z_j)\times Q_{i+m}\widehat{\mathcal{C}}_{i+m}\cdotp \overline{Q_i\widehat{\mathcal{C}}_i}}\biggr|
                                               &\leq \frac{1}{8\pi} \frac{C_{k,_{D(\Omega)}}}{\delta^3} \sum_{i= 1}^{2m}\abs{Q_i\widehat{\mathcal{C}}_i}.
                                       \end{align*} 
                                       
Repeating the same argument for the second term of the right-hand side of \eqref{Teta-k-estimates} gives the conclusion.
\end{proof}

\begin{proof}(of Proposition \ref{propositioncondinvsysl})
If we consider \eqref{Systemintermdiare}, in view of \eqref{DominatingTerm1} and \eqref{DominatingTerm2}, we get 
                                                   \begin{align*}
                                                   \bigl<\widehat{\mathcal{C}}, \mathcal{Q}\widehat{\mathcal{C}}\bigr>_{\mathbb{C}^{3\times 2m}}&+
                                                              \bigl<\varTheta^k\mathcal{Q}\widehat{\mathcal{C}}, \mathcal{Q}\widehat{\mathcal{C}}\bigr>_{\mathbb{C}^{3\times 2m}}+
                                                                \bigl<\left(\Sigma^k-\Sigma^0\right)\mathcal{Q}\widehat{\mathcal{C}}, \mathcal{Q}\widehat{\mathcal{C}}\bigr>_{\mathbb{C}^{3\times 2m}}\\
                                                              +&\frac{48^2 }{\pi^2\delta^6}\Bigl(\bigl<S_{\p \widehat{\Omega}}^0~\mathcal{U},\mathcal{U}\bigr>_{\Ll^2(\p \widehat{\Omega})}-
                                                                \sum_{i=1}^{m}\bigl<S_{{\p B_{\delta/4}^{z_i}}}^0~\mathcal{U},\mathcal{U}\bigr>_{\Ll^2({\p B_{\delta/4}^{z_i}})}\Bigr)\\
                                                              +&\frac{48^2 }{\pi^2\delta^6}\left(\bigl<S_{\p \widehat{\Omega}}^0\mathcal{V},\mathcal{V}\bigr>_{\Ll^2(\p \widehat{\Omega})}-
                                                                 \sum_{i=m+1}^{2m}\bigl<S_{{\p B_{\delta/4}^{z_{i-m}}}}^0~\mathcal{V},\mathcal{V}\bigr>_{\Ll^2({\p B_{\delta/4}^{z_{i-m}}})}\right)
                                                                 =\bigl<\mathcal{E}, \mathcal{Q}\widehat{\mathcal{C}}\bigr>_{\mathbb{C}^{3\times 2m}}.
                                                   \end{align*}
Using the fact that $\bigl<S_{\p \widehat{\Omega}}^0~\phi,\phi\bigr>_{\Ll^2(\p \widehat{\Omega})}\geq 0,$ for every $\phi\in\Ll^2(\p \Omega)$ we get 
                                                   \begin{align*}
                                                   \bigl<\widehat{\mathcal{C}}, \mathcal{Q}\widehat{\mathcal{C}}\bigr>_{\mathbb{C}^{3\times 2m}}+
                                                              \bigl<\varTheta^k\mathcal{Q}\widehat{\mathcal{C}}, \mathcal{Q}\widehat{\mathcal{C}}\bigr>_{\mathbb{C}^{3\times 2m}}+
                                                             &\bigl<\left(\Sigma^k-\Sigma^0\right)\mathcal{Q}\widehat{\mathcal{C}}, \mathcal{Q}\widehat{\mathcal{C}}\bigr>_{\mathbb{C}^{3\times 2m}}\\
                                                              &\hspace{-5cm} -\frac{48^2 }{\pi^2\delta^6}\Bigl(\sum_{i=1}^{m}
                                                              \bigl<S_{{\p B_{\delta/4}^{z_i}}}^0~\mathcal{U},\mathcal{U}\bigr>_{\Ll^2({\p B_{\delta/4}^{z_i}})}+
                                                              \sum_{i=m+1}^{2m}\bigl<S_{{\p B_{\delta/4}^{z_{i-m}}}}^0~\mathcal{V},\mathcal{V}\bigr>_{\Ll^2({\p B_{\delta/4}^{z_{i-m}}})}\Bigr)\leq
                                                              \bigl<\mathcal{E}, \mathcal{Q}\widehat{\mathcal{C}}\bigr>_{\mathbb{C}^{3\times 2m}}.
                                                   \end{align*} 
By the definition of $\mathcal{Q}$ and the inequalities \eqref{Tensor-Inequalities}, we have 
                                                                     \begin{align}\label{Sys-Linear-inmind}
                                                                     \bigl<\mathcal{Q}\widehat{\mathcal{C}}, \mathcal{Q}\widehat{\mathcal{C}}\bigr>_{\mathbb{C}^{3\times 2m}}
                                                                     \leq \mu^+\epsilon^3 \bigl<\widehat{\mathcal{C}}, \mathcal{Q}\widehat{\mathcal{C}}\bigr>_{\mathbb{C}^{3\times 2m}}.
\end{align} 

Using \eqref{Differestimate} and \eqref{Differestimate2}, we arrive at 
                                          \begin{equation}\label{Last-Drift}
                                                   \begin{aligned}
                                                   &\Bigl(1-\bigl[ \frac{(1+\abs{k}^2)C_{k,_{D(\Omega)}}}{8\pi} +
                                                   \frac{63^\frac{1}{2} \abs{k}^2D(\Omega)^{\frac{2}{3}}}{4\pi}\bigr]\frac{\mu^+\epsilon^3}{\delta^3} \Bigr)
                                                                                           \bigl<\widehat{\mathcal{C}},\mathcal{Q}\widehat{\mathcal{C}}\bigr>_{\mathbb{C}^{3\times 2m}}\\
                                                   &- \frac{48^2 }{\pi^2\delta^6}\Bigl(\sum_{i=1}^{m}
                                                   \bigl<S_{{\p B_{\delta/4}^{z_i}}}^0~\mathcal{U},\mathcal{U}\bigr>_{\Ll^2({\p B_{\delta/4}^{z_i}})}+
                                                   \sum_{i=m+1}^{2m}\hspace{-1mm}\bigl<S_{{\p B_{\delta/4}^{z_{i-m}}}}^0~\mathcal{V},\mathcal{V}\bigr>_{\Ll^2({\p B_{\delta/4}^{z_{i-m}}})}\Bigr)
                                                   \leq \bigl<\mathcal{E}, \mathcal{Q}\widehat{\mathcal{C}}\bigr>_{\mathbb{C}^{3\times 2m}}.
                                                   \end{aligned}
                                          \end{equation}  

Further, the following scaling inequality holds
                                                \begin{align*}
                                                 \bigl<S_{{\p B_{\delta/4}^{z_{i}}}}^0~\mathcal{U},\mathcal{U}\bigr>_{\Ll^2({\p B_{\delta/4}^{z_{i}}})}
                                                 &=
                                                 \int_{B_{\delta/4}^{z_{i}}}\int_{B_{\delta/4}^{z_{i}}} 
                                                 (\Phi_0(x,y)\nu_x\cdotp{Q_i\widehat{\mathcal{C}}_i}~ds(y))\nu_y \cdotp \overline{Q_i\widehat{\mathcal{C}}_i}~ds(x),\\
                                                 &=
                                                 \int_{\p B_{1}^{z_{i}}}\Bigl(\int_{\p B_1^{z_i}} \frac{1}{(\frac{\delta}{4})}~\Phi_0(s,t)~ \nu_t \cdotp {Q_i\widehat{\mathcal{C}}_i} (\frac{\delta}{4})^2~ds(t)\Bigr)
                                                 \nu_s~ \cdotp \overline{Q_i\widehat{\mathcal{C}}_i} (\frac{\delta}{4})^2~ds(s),\\
                                                 &\leq
                                                 (\frac{\delta^3}{4^3}) \norm*{S_{_{B^1}}}\abs{Q_i\widehat{\mathcal{C}}_i} \abs{\p B_1^0}= (\frac{4\pi\delta^3}{4^3}) \norm*{S_{_{B^1}}}\abs{Q_i\widehat{\mathcal{C}}_i}^2 ,
                                                 \end{align*} 
which remains valid for $\bigl<S_{{\p B_{\delta/4}^{z_{i-m}}}}^0~\mathcal{V},\mathcal{V}\bigr>_{\Ll^2({\p B_{\delta/4}^{z_{i-m}}})},$
                   gives in \eqref{Last-Drift} \begin{align*}
                                               \Bigl(1-\bigl[ \frac{(1+\abs{k}^2)C_{k,_{D(\Omega)}}}{8\pi} +
                                               \frac{63^\frac{1}{2} \abs{k}^2D(\Omega)^{\frac{2}{3}}}{4\pi}\bigr]& \frac{\mu^+\epsilon^3}{\delta^3} \Bigr)
                                               \bigl<\widehat{\mathcal{C}}, \mathcal{Q}\widehat{\mathcal{C}}\bigr>_{\mathbb{C}^{3\times 2m}}\\
                                               &-\frac{12^2 \norm*{S_{_{B^1}}} }{\pi\delta^3}\sum_{i=1}^{2m}\abs{Q_i\widehat{\mathcal{C}}_i}^2\leq  
                                               \bigl<\mathcal{E}, \mathcal{Q}\widehat{\mathcal{C}}\bigr>_{\mathbb{C}^{3\times 2m}}.
                                               \end{align*} 
                                               
As $\bigl<\mathcal{E}, \mathcal{Q}\widehat{\mathcal{C}}\bigr>_{\mathbb{C}^{3\times 2m}}\leq 
                                               \bigl<\widehat{\mathcal{C}}, \mathcal{Q}\widehat{\mathcal{C}}\bigr>_{\mathbb{C}^{3\times 2m}}^\frac{1}{2}\bigl<\mathcal{E}, 
                                               \mathcal{Q}\mathcal{E}\bigr>_{\mathbb{C}^{3\times 2m}}^\frac{1}{2}$
    we get, with \eqref{Sys-Linear-inmind} \begin{align*}
                                            \Bigl(1-{\bigl[ \frac{(1+\abs{k}^2)C_{k,_{D(\Omega)}}}{8\pi} +\frac{12^2\norm*{S_{_{B^1}}} }{\pi}+
                                            \frac{63^\frac{1}{2} \abs{k}^2D(\Omega)^{\frac{2}{3}}}{4\pi}\bigr]}\frac{\mu^+\epsilon^3}{\delta^3} \Bigr)&\bigl<\widehat{\mathcal{C}}, 
                                            \mathcal{Q}\widehat{\mathcal{C}}\bigr>_{\mathbb{C}^{3\times 2m}}\\
                                           &\leq\bigl<\widehat{\mathcal{C}}, \mathcal{Q}\widehat{\mathcal{C}}\bigr>_{\mathbb{C}^{3\times 2m}}^\frac{1}{2}
                                             \bigl<\mathcal{E}, \mathcal{Q}\mathcal{E}\bigr>_{\mathbb{C}^{3\times 2m}}^\frac{1}{2},
                                            \end{align*} 
which is precisely   \begin{align}\label{Transforme-Into-Estima}
                                            \left(1- C_{Ls}\frac{\mu^+\epsilon^3}{\delta^3} \right)\bigl<\widehat{\mathcal{C}}, \mathcal{Q}\widehat{\mathcal{C}}\bigr>_{\mathbb{C}^{3\times 2m}}^\frac{1}{2}\leq 
                                             \bigl<\mathcal{E}, \mathcal{Q}\mathcal{E}\bigr>_{\mathbb{C}^{3\times 2m}}^\frac{1}{2}
                                            \end{align} 
where we set 
\begin{equation}
 \label{constant-Cls}
 C_{Ls}:={\bigl[ \frac{(1+\abs{k}^2)C_0 \Bigl((1+\frac{\abs{k}}{2}) D(\Omega)^\frac{1}{3}+\frac{\abs{k}}{2}D(\Omega)^\frac{2}{3}\Bigr) }{8\pi} +\frac{12^2\norm*{S_{_{B^1}}} }{\pi}+
                                            \frac{63^\frac{1}{2} \abs{k}^2D(\Omega)^{\frac{2}{3}}}{4\pi}\bigr]} 
\end{equation}
recalling the constants $C_{k,_{D(\Omega)}}$.
                                            
                                            Then a sufficient condition for the solvability of \eqref{SYSTOT} is given by $C_{Ls}\frac{\mu^+\epsilon^3}{\delta^3}< 1$. Further, if the previous condition is satisfied, then from\eqref{Transforme-Into-Estima}, considering \eqref{Tensor-Inequalities}, we get
                                                     \begin{align}\label{Estimates-For-C}
                                                      \left(1- C_{Ls}\frac{\mu^+\epsilon^3}{\delta^3} \right)\mu^-\epsilon^3\bigl<\widehat{\mathcal{C}},\widehat{\mathcal{C}}\bigr>_{\mathbb{C}^{3\times 2m}}^\frac{1}{2}\leq 
                                             \mu^+\epsilon^3\bigl<\mathcal{E},\mathcal{E}\bigr>_{\mathbb{C}^{3\times 2m}}^\frac{1}{2},
                                                     \end{align} 
and the definition of $\mathcal{C},$ yields inverting \eqref{Tensor-Inequalities} 
                                           \begin{align*}
                                            \bigl<\widehat{\mathcal{C}},\widehat{\mathcal{C}}\bigr>_{\mathbb{C}^{3\times 2m}}^\frac{1}{2}
                                            =&   \left(\bigl<\mathcal{Q}_{1,1}^{-1}\widehat{\mathcal{B}},\mathcal{Q}_{1,1}^{-1}\widehat{\mathcal{B}}\bigr>_{\mathbb{C}^{3\times m}}
                                                     +\bigl<\mathcal{Q}_{2,2}^{-1}\widehat{\mathcal{A}},\mathcal{Q}_{2,2}^{-1}\widehat{\mathcal{A}}\bigr>_{\mathbb{C}^{3\times m}}\right)^\frac{1}{2},\\
                                          \geq&\frac{\mu^+}{\epsilon^3} \left(\bigl<\widehat{\mathcal{B}},\widehat{\mathcal{B}}\bigr>_{\mathbb{C}^{3\times m}}
                                                    +\bigl<\widehat{\mathcal{A}},\widehat{\mathcal{A}}\bigr>_{\mathbb{C}^{3\times m}}\right)^\frac{1}{2}.
                                           \end{align*} 

Replacing in \eqref{Estimates-For-C}, we obtain \begin{align*}
                                            \left(1- C_{Ls}\frac{\mu^+\epsilon^3}{\delta^3} \right)\mu^-
                                            \mu^+ \left(\bigl<\widehat{\mathcal{B}},\widehat{\mathcal{B}}\bigr>_{\mathbb{C}^{3\times m}}
                                                    +\bigl<\widehat{\mathcal{A}},\widehat{\mathcal{A}}\bigr>_{\mathbb{C}^{3\times m}}\right)^\frac{1}{2}\leq 
                                                    \mu^+\epsilon^3\bigl<\mathcal{E},\mathcal{E}\bigr>_{\mathbb{C}^{3\times 2m}}^\frac{1}{2}.
                                            \end{align*} 

Concerning \eqref{Estimate-Each-Solution-LinSys}, it suffice to observe that 
                                            \begin{align*}
                                             \abs{\Sigma^k}&\leq 2\sum_{i=1}^m\abs{\Pi_k(z_i,z_j)}\leq \frac{1}{2\pi}\sum_{i=1}^m\Bigl(\frac{\abs{k}^2}{\delta_{i,j}}+\frac{3}{\delta_{i,j}}\bigl(\frac{1}{\delta_{i,j}}+\abs{k}\bigr)^2\Bigr),\\
                                             \abs{\varTheta^k}&\leq2\sum_{i=1}^m\abs{\nabla\Phi_k(z_i,z_j)}\leq \frac{1}{2\pi}\sum_{i=1}^m\frac{1}{\delta_{i,j}}\bigl(\frac{1}{\delta_{i,j}}+\abs{k}\bigr)
                                            \end{align*} 
which, summing as in \eqref{Exhaustiv-counting} gives, 
                                            \begin{align*}
                                             \abs{\Sigma^k\mathcal{Q}}+\abs{\varTheta^k\mathcal{Q}}\leq
                                             \abs{\Sigma^k}+\abs{\varTheta^k}\leq 4 \mu^+ \left(\frac{\ln{m^\frac{1}{3}}}{\delta^3}+\frac{2k m^\frac{1}{3}}{\delta^2} +\frac{m^\frac{2}{3}}{2\delta}k^2\right)\epsilon^3
                                            \end{align*} with this, it comes that, for 
                                            $$C_{L^2_i}={1-4\mu^+\left(\frac{\ln{m^\frac{1}{3}}}{\delta^3}+\frac{2k m^\frac{1}{3}}{\delta^2} +\frac{m^\frac{2}{3}}{2\delta}k^2\right)\epsilon^3}$$ 
                                            \begin{align}
                                            \abs{\mathcal{C}_i}\leq \frac{1}{C_{L^2_i}}\mathcal{E}_i.
                                            \end{align}

\end{proof}

\section{End of the proof of Theorem \ref{letheorem}}\label{Section-end-of-the-proof}
With the notations of the previous section, the linear system (\eqref{SYS2},\eqref{SYS1}) becomes 
                                                              \begin{align}\label{Systemdebase} 
                                                               \mathcal{C}+\Sigma^k\mathcal{Q} \mathcal{C}+\varTheta^k\mathcal{Q} {\mathcal{C}}=\mathcal{E}+\Varepsilon(\epsilon,\delta,&\abs{k},m)\epsilon^3,
                                                              \end{align} 
                  with $({\mathcal{C}}_i)_{i\in\{1,...,2m\}}$ defined as 
                                                              \begin{equation}
                                         {\mathcal{C}}_i:= \left\{\begin{aligned}
                                                               &\Vmt^{-1}{\mathcal{B}}_i,~ &&~i\in\{1,...,m\},\\
                                                               &-\bigl[\mathcal{P}_{\p D_{i-m}}\bigr]^{-1}{\mathcal{A}}_{i-m},~ &&~i\in\{m+1,...,2m\},
                                                              \end{aligned}\right.
                                                              \end{equation}
                                                         and ${\Varepsilon}(\epsilon,\delta,\abs{k},m)=({\Varepsilon_i}(\epsilon,\delta,\abs{k},m))_{i=1}^m$ with
                                                              \begin{equation}\label{Definition-of-Varepsilon}
            {\Varepsilon_i}(\epsilon,\delta,\abs{k},m):= \left\{\begin{aligned}
                                                               &\frac{\epsilon^4}{\delta^4}+ \Varepsilon_{k,\delta,m}  \epsilon^4+(1+\abs{k})\epsilon
                                                               ,~ &&~i\in\{1,...,m\},\\
                                                               &\frac{\epsilon^4}{\delta^4}+ \abs{k}\Varepsilon_{k,\delta,m}  \epsilon^4+\abs{k}^2\epsilon,
                                                               ~ &&~i\in\{m+1,...,2m\}.
                                                              \end{aligned}\right.
                                                              \end{equation} 
The difference between \eqref{Systemdebase} and \eqref{Linear-System-Transform} implies 
\begin{align}
                                                                                     (\mathcal{C}-\widehat{\mathcal{C}})+
                                                                                     \Sigma^k\mathcal{Q} (\mathcal{C}-\widehat{\mathcal{C}})+\varTheta^k\mathcal{Q} (\mathcal{C}-\widehat{\mathcal{C}})
                                                                                     =\Varepsilon(\epsilon,\delta,&\abs{k},m),
                                                                                    \end{align} 
           which gives, with the estimates \eqref{Estimate-Solutions-of-LinSys} \begin{align}\label{Estimation-of-the-Approximation-AhatA}
                                                                                \sum_{i=1}^m
                                                                                \Bigl(\abs*{\widehat{\mathcal{A}}_i-\mathcal{A}_i}^2+
                                                                                \abs*{\widehat{\mathcal{B}}_i-\mathcal{B}_i}^2\Bigr)\leq \frac{1}{C_{Li}\mu^-} \Bigl(\frac{\epsilon^4}{\delta^4}
                                                                                +(1+\abs{k})\Varepsilon_{k,\delta,m} \epsilon^4+\max(\abs{k}^2,1+\abs{k} )\epsilon\Bigr)^2
                                                                                ~2m~\epsilon^6,
                                                                                \end{align} and, with \eqref{Estimate-Each-Solution-LinSys}
                                                                                \begin{equation}\label{Estimate-Each-Approximation-AhatA}
                                                                                \left\{\begin{aligned}
                                                                                \abs*{\widehat{\mathcal{A}}_i-\mathcal{A}_i}
                                                                                \leq \frac{1}{C_{L_i^2}\mu^-} \Bigl(\frac{\epsilon^4}{\delta^4}
                                                                                +(1+\abs{k})\Varepsilon_{k,\delta,m} \epsilon^4+\max(\abs{k}^2,1+\abs{k})\epsilon\Bigr)^2
                                                                                ~\epsilon^3,\\
                                                                                \abs*{\widehat{\mathcal{B}}_i-\mathcal{B}_i}
                                                                                \leq \frac{1}{C_{L_i^2}\mu^-}\Bigl(\frac{\epsilon^4}{\delta^4}
                                                                                +(1+\abs{k})\Varepsilon_{k,\delta,m} \epsilon^4+\max(\abs{k}^2,1+\abs{k})\epsilon\Bigr)^2
                                                                                ~\epsilon^3.
                                                                                \end{aligned}\right.
                                                                                \end{equation}
                                                                                We set $$O^\Varepsilon(\frac{\epsilon^4}{\delta^4}):=O\Bigl(\frac{\epsilon^4}{\delta^4}
                                                                                +(1+\abs{k})\Varepsilon_{k,\delta,m} \epsilon^4+\max(\abs{k}^2,1+\abs{k})\epsilon\Bigr).$$
\begin{lemma}
 We have the following asymptotic approximation for the far field,    
    \begin{equation*}
        \begin{aligned}
        E^\infty(\tau)=&\frac{ik}{4\pi}\sum_{i=1}^m e^{-ik\tau.z_i}\tau\times 
        \bigl(\widehat{\mathcal{A}}_i-ik\tau\times \widehat{\mathcal{B}}_i\bigr) +O\left(   ~ (\abs{k}^3+\abs{k}^2)~ m\epsilon^4\right)\\
        &+\frac{\abs{k}}{2\pi}~\frac{\max(1,\abs{k})}{C_{Li}\mu^-}~O^\Varepsilon\Bigl(\frac{\epsilon^4}{\delta^4}\Bigr)m\epsilon^3
        \end{aligned}
   \end{equation*}
    and the following one for the scattered field
 \begin{align*}
  E^\s(x)=&\sum_{i=1}^m \left(  \nabla\Phi_k(x,z_i)\times\widehat{\mathcal{A}}_i+
                       \curl \curl (\Phi_k(x,z_i) \widehat{\mathcal{B}}_i)\right)+\frac{(C_{L_i^2}\mu^-)^{-1}}{\mu^+}\times O^\Varepsilon(\frac{\epsilon^4}{\delta^4})\\
                       &+O\bigl((C_{L_i^2}\mu^-)^{-1}\frac{\epsilon^7}{\delta^7}+\Varepsilon(\delta^6,\abs{k}+\abs{k}^2+\abs{k}^3)\epsilon^7+\Varepsilon(\delta^5,\abs{k}^2)\epsilon^7\bigr).
\end{align*}
\end{lemma}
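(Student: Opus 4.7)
The natural strategy is to start from the already-established field representations of \Cref{Field-Approximation-First-Step} (which express $E^\infty$ and $E^\s$ in terms of the moments $\mathcal{A}_i,\mathcal{B}_i$ solving the perturbed system (\ref{SYS1})--(\ref{SYS2})) and then to substitute the exact solutions $\widehat{\mathcal{A}}_i,\widehat{\mathcal{B}}_i$ of the invertible system (\ref{SYSTOT}) for their approximants. The cost of this substitution is precisely what is quantified by (\ref{Estimation-of-the-Approximation-AhatA}) and (\ref{Estimate-Each-Approximation-AhatA}), so the lemma is a matter of combining these three ingredients with care regarding which norm (global $\ell^2$ versus pointwise) is used in each regime.

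For the far-field formula, I would write
\begin{equation*}
\mathcal{A}_i-ik\tau\times \mathcal{B}_i = \widehat{\mathcal{A}}_i-ik\tau\times \widehat{\mathcal{B}}_i + (\mathcal{A}_i-\widehat{\mathcal{A}}_i) - ik\tau\times(\mathcal{B}_i-\widehat{\mathcal{B}}_i),
\end{equation*}
plug into (\ref{Far-Field-Approximation}), and bound the resulting error by Cauchy--Schwarz over $i=1,\dots,m$:
\begin{equation*}
\sum_{i=1}^m \bigl|\mathcal{A}_i-\widehat{\mathcal{A}}_i\bigr| + |k|\sum_{i=1}^m\bigl|\mathcal{B}_i-\widehat{\mathcal{B}}_i\bigr|
\leq \sqrt{2m}\,\max(1,|k|)\Bigl(\sum_{i=1}^m|\mathcal{A}_i-\widehat{\mathcal{A}}_i|^2+|\mathcal{B}_i-\widehat{\mathcal{B}}_i|^2\Bigr)^{1/2}.
\end{equation*}
Applying the global estimate (\ref{Estimation-of-the-Approximation-AhatA}) converts this into $\tfrac{\max(1,|k|)}{C_{Li}\mu^-}\,O^{\Varepsilon}(\epsilon^4/\delta^4)\,m\,\epsilon^3$, which after multiplication by the $\tfrac{ik}{4\pi}\tau\times$ prefactor produces exactly the second error term in (\ref{Farfieldpattern}); adding the $O((|k|^3+|k|^2)m\epsilon^4)$ error already present in (\ref{Farfield-lemma-appro-estimate-2}) finishes the far-field case.

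For the scattered field, the global $\ell^2$ bound is too lossy because the Green's function derivatives evaluated at $x$ with $d(x,D_i)\geq \delta$ are highly inhomogeneous in $i$. I would therefore use the \emph{per-term} bound (\ref{Estimate-Each-Approximation-AhatA}) and estimate the error as
\begin{equation*}
\Bigl|\sum_{i=1}^m \nabla_x\Phi_k(x,z_i)\times(\widehat{\mathcal{A}}_i-\mathcal{A}_i)
+\curl\curl\bigl(\Phi_k(x,z_i)(\widehat{\mathcal{B}}_i-\mathcal{B}_i)\bigr)\Bigr|
\lesssim \tfrac{1}{C_{L_i^2}\mu^-}O^{\Varepsilon}(\tfrac{\epsilon^4}{\delta^4})\epsilon^3 \sum_{i=1}^m \tfrac{1}{\delta_{i_0,i}}\bigl(\tfrac{1}{\delta_{i_0,i}}+|k|\bigr)^2,
\end{equation*}
where $i_0$ realizes $\min_i d(x,D_i)$. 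The sum over $i$ is then controlled by the same shell-counting argument used in (\ref{Exhaustiv-counting}) and (\ref{Error-Exhaustiv-Expression}), producing a factor of the type $\frac{1}{\delta}\Varepsilon_{k,\delta,m}^{(2)}$, which after multiplication by $\epsilon^7$ fits inside the declared $O(\epsilon^7/\delta^7+\Varepsilon(\delta^6,\ldots)\epsilon^7+\Varepsilon(\delta^5,|k|^2)\epsilon^7)$ envelope. Adding the $O(\epsilon^4/\delta^4+\Varepsilon_{k,\delta,m}\epsilon^4)$ error of (\ref{Near-Field-Approximation}) yields the stated formula.

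The main technical obstacle is purely bookkeeping: keeping track of the various powers of $(1/\delta_{i,j}+|k|)$ that arise from the three different types of kernels ($\Phi_k$, $\nabla\Phi_k$, $\nabla^2\Phi_k$) appearing implicitly in $\Pi_k$ and $\nabla\Phi_k\times$, and matching the resulting sums against the definitions of $\Varepsilon_{k,\delta,m}$ and $O^{\Varepsilon}(\epsilon^7/\delta^7)$ in (\ref{Varepsilon-Generic-Function}). Nothing new beyond the shell counting of \Cref{Exhaustiv-counting} and the per-term Neumann estimate (\ref{Estimate-Each-Approximation-AhatA}) is required, and both (\ref{Systemdebase}) and the definition (\ref{Definition-of-Varepsilon}) of $\Varepsilon(\epsilon,\delta,|k|,m)$ are precisely set up to make the accounting work.
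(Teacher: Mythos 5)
Your far-field derivation mirrors the paper's exactly: Cauchy--Schwarz over $i=1,\dots,m$ combined with the global $\ell^2$ bound (\ref{Estimation-of-the-Approximation-AhatA}) is what the paper does, and it yields the declared far-field error.

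The near-field part takes a genuinely different and, as it stands, lossy route. You bound the substitution error directly,
\[
\sum_{i=1}^m \Bigl|\nabla\Phi_k(x,z_i)\times(\widehat{\mathcal{A}}_i-\mathcal{A}_i)\Bigr|
+\Bigl|\curl\curl\bigl(\Phi_k(x,z_i)(\widehat{\mathcal{B}}_i-\mathcal{B}_i)\bigr)\Bigr|
\;\lesssim\; \frac{1}{C_{L_i^2}\mu^-}\,O^\Varepsilon\!\Bigl(\tfrac{\epsilon^4}{\delta^4}\Bigr)\epsilon^3 \sum_{i} \frac{1}{\delta_{i_0,i}}\Bigl(\frac{1}{\delta_{i_0,i}}+|k|\Bigr)^2,
\]
using the per-term estimate (\ref{Estimate-Each-Approximation-AhatA}) and the shell-count of (\ref{Exhaustiv-counting}). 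The shell sum is of size $\sim\frac{\ln m^{1/3}}{\delta^3}+\frac{|k|m^{1/3}}{\delta^2}+\frac{|k|^2 m^{2/3}}{\delta}$, and the hypothesis (\ref{General-Condition}) controls $\epsilon^3$ times this quantity by a constant $C$, \emph{not} by $C/\mu^+$. So your resulting error is $\lesssim (C_{L_i^2}\mu^-)^{-1}O^\Varepsilon(\epsilon^4/\delta^4)$ with no $1/\mu^+$, whereas the statement declares $\frac{(C_{L_i^2}\mu^-)^{-1}}{\mu^+}O^\Varepsilon(\epsilon^4/\delta^4)$; you are off by a factor of $\mu^+$, which is not a universal constant but grows with the anisotropy of the $B_i$'s.

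The paper closes this gap by a manipulation you do not perform. After Taylor-centering the kernels at $z_{i_0}$, it observes that
\[
\sum_{(i\neq i_0)\geq 1}^m \Bigl(\nabla\Phi_k(z_{i_0},z_i)\times(\mathcal{A}_i-\widehat{\mathcal{A}}_i)+\curl\curl\bigl(\Phi_k(z_{i_0},z_i)(\mathcal{B}_i-\widehat{\mathcal{B}}_i)\bigr)\Bigr)
= [\mathcal{T}_{\p D_{i_0}}]^{-1}(\mathcal{B}_{i_0}-\widehat{\mathcal{B}}_{i_0}) + \Varepsilon_{i_0}(\epsilon,\delta,|k|,m),
\]
because the left-hand side is exactly the $i_0$-th row of the difference of the perturbed system (\ref{Systemdebase}) and the exact system (\ref{Linear-System-Transform}). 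The multiple-scattering sum, far from being estimated term by term, collapses to a single tensor-inverse applied to one moment error, and bounding $[\mathcal{T}_{\p D_{i_0}}]^{-1}$ by $(\mu^\pm\epsilon^3)^{-1}$ produces the $\frac{1}{\mu^+}O^\Varepsilon(\epsilon^4/\delta^4)$ term directly. Only the residual Taylor sum $\sum_{i\neq i_0}(\nabla\Phi_k(x,z_i)-\nabla\Phi_k(z_{i_0},z_i))\times(\cdots)$ and the isolated $i_0$-term need shell counting, and both fit in the $O^\Varepsilon(\epsilon^7/\delta^7)$ envelope. This re-packaging of the error sum back into the algebraic system is the missing idea; without it your bound does not reproduce the stated dependence on $\mu^+$.
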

\begin{proof}
 Recalling the approximation of the far field in Proposition \ref{Field-Approximation-First-Step},  we have 
                                       \begin{equation}\label{Far-Field-Approximation2}\begin{aligned}
                                         E^\infty(\tau)=&\frac{ik}{4\pi}\sum_{i=1}^m e^{-ik\tau.z_i}\tau\times 
                             \bigl((\mathcal{A}_i-\widehat{\mathcal{A}}_i)-ik\tau\times (\mathcal{B}_i-\widehat{\mathcal{B}_i}\bigr)\\
                              &+\frac{ik}{4\pi}\sum_{i=1}^m e^{-ik\tau.z_i}\tau\times 
                             \bigl(\widehat{\mathcal{A}}_i-ik\tau\times \widehat{\mathcal{B}}_i\bigr) +O\left( (\abs{k}^3+\abs{k}^2)~ m\epsilon^4\right).
                                       \end{aligned}
                                       \end{equation} 
For the first term of the right hand side, we have
                                          \begin{align*}
                                                \abs*{\frac{ik}{4\pi}
                                                \sum_{i=1}^m e^{-ik\tau.z_i}\tau\times 
                                                \bigl((\mathcal{A}_i-\widehat{\mathcal{A}}_i)-ik\tau\times (\mathcal{B}_i-\widehat{\mathcal{B}_i})\bigr)}&\\
                                               &\hspace{-3cm}\leq2~ \frac{\abs{k}}{4\pi}\max(1,\abs{k})~m^\frac{1}{2}
                                               \Bigl(\sum_{i=1}^m \bigl(\abs{\mathcal{A}_i-\widehat{\mathcal{A}}_i}^2+\abs{\mathcal{B}_i-\widehat{\mathcal{B}_i}}^2\bigr)\Bigr)^\frac{1}{2},\\
                                               &\hspace{-3cm}\leq \frac{\abs{k}}{2\pi}\frac{\max(1,\abs{k})}{C_{L_i\mu^-}}~O^\Varepsilon\Bigl(\frac{\epsilon^4}{\delta^4}\Bigr)m\epsilon^3.
                                          \end{align*} 

With this estimate, \eqref{Far-Field-Approximation2} becomes
                                 \begin{equation}         
                                          \begin{aligned}
                                            E^\infty(\tau)=&\frac{ik}{4\pi}\sum_{i=1}^m e^{-ik\tau.z_i}\tau\times 
                                                            \bigl(\widehat{\mathcal{A}}_i-ik\tau\times \widehat{\mathcal{B}}_i\bigr) +O\left(   ~ (\abs{k}^3+\abs{k}^2)~ m\epsilon^4\right)\\
                                                           &+\frac{\abs{k}}{2\pi}\frac{\max(1,\abs{k})}{C_{L_i\mu^-}}~O^\Varepsilon\Bigl(\frac{\epsilon^4}{\delta^4}\Bigr)m\epsilon^3.
                                          \end{aligned}
                                 \end{equation}
Again, in view of Proposition \ref{Field-Approximation-First-Step}, we have 
                                            \begin{equation}\label{Field-Approximation-LinSys-Without-Error}
                                            \begin{aligned}
                                            E^\s(x)=\sum_{i=1}^m& \left(  \nabla\Phi_k(x,z_i)\times\bigl(\mathcal{A}_i-\widehat{\mathcal{A}}_i\bigr)+  
                                                    \curl \curl (\Phi_k(x,z_i) \bigl(\mathcal{B}_i-\widehat{\mathcal{B}}_i\bigr))\right)\\
                                                    &+\sum_{i=1}^m \left(  \nabla\Phi_k(x,z_i)\times\widehat{\mathcal{A}}_i+  
                                                    \curl \curl (\Phi_k(x,z_i) \widehat{\mathcal{B}}_i)\right)\\
                                                   &+O\bigl(\frac{\epsilon^4}{\delta^4}+\Varepsilon_{k,\delta,m}~\epsilon^4 \bigr).
                                           \end{aligned} 
                                           \end{equation}
 Let $i_0$ be as in \eqref{Min-indices-Satisfying-d(x,D_i)},
 from the representation of the linear system we have \footnote{Notice that $-\varPi_k(x,y)=\nabla_y\times\nabla_x\times \Phi_k(x,y)\; I$.} 
                                                     \begin{align*}
                                                      \sum_{(i\neq i_0)\geq1}^m \biggl(  \nabla\Phi_k(z_{i_0},z_i)\times\bigl(\mathcal{A}_i-\widehat{\mathcal{A}}_i\bigr)+&  
                                                    \curl \curl (\Phi_k(z_{i_0},z_i) \bigl(\mathcal{B}_i-\widehat{\mathcal{B}}_i\bigr))\biggr)\\
                                                    =&
                                                    [\mathcal{T}_{\p D_{i_0}}]^{-1}\bigl(\mathcal{B}_{i_0}-\widehat{\mathcal{B}}_{i_0}\bigr)+{\Varepsilon_{i_0}}(\epsilon,\delta,\abs{k},m),
                                                      \end{align*}
                                                    hence, adding and subtracting the last identity to \eqref{Field-Approximation-LinSys-Without-Error} gives 
                        \begin{equation}\label{Field-Approximation-LinSys-Without-Error2}
                       \begin{aligned}
                        E^\s(x)=&\biggl(  \nabla\Phi_k(x,z_{i_0})\times\bigl(\mathcal{A}_{i_0}-\widehat{\mathcal{A}}_{i_0}\bigr)+  
                                                    \curl \curl (\Phi_k(x,z_{i_0}) \bigl(\mathcal{B}_{i_0}-\widehat{\mathcal{B}}_{i_0}\bigr))\biggr)\\
                       &+\sum_{(i\neq i_0)\geq1}^m  \Bigl[(\nabla\Phi_k(x,z_i)-\nabla\Phi_k(z_{i_0},z_i))\times\bigl(\mathcal{A}_{i_0}-\widehat{\mathcal{A}}_{i_0}\bigr)\\
                       &\hspace{1,5cm}+ \curl \curl \Bigl(\bigl(\Phi_k(x,z_i)-\Phi_k(z_{i_0},z_i)\bigr)\bigl(\mathcal{B}_i-\widehat{\mathcal{B}}_i\bigr)\Bigr)\Bigr]\\
                       &+[\mathcal{T}_{\p D_{i_0}}]^{-1}\bigl(\mathcal{B}_{i_0}-\widehat{\mathcal{B}}_{i_0}\bigr)+{\Varepsilon_{i_0}}(\epsilon,\delta,\abs{k},m)\\
                       &+\sum_{i=1}^m \left(  \nabla\Phi_k(x,z_i)\times\widehat{\mathcal{A}}_i+  
                       \curl \curl (\Phi_k(x,z_i) \widehat{\mathcal{B}}_i)\right)\\
                       &+O\bigl(\frac{\epsilon^4}{\delta^4}+\Varepsilon_{k,\delta,m}~\epsilon^4 \bigr).
                       \end{aligned} 
                       \end{equation}
 For $x\in\p \Omega,$  $\frac{1}{d_{x,i_0}}=\frac{1}{\delta}$ and then the first term of the right hand side of \eqref{Field-Approximation-LinSys-Without-Error2} 
 is smaller then \begin{align*}
                 \biggl(\frac{1}{\delta}\bigl(\frac{1}{\delta}+\abs{k}\bigr)\abs*{\mathcal{A}_{i_0}-\widehat{\mathcal{A}}_{i_0}}
                 +\Bigl(\frac{\abs{k}^2}{\delta}+\frac{1}{\delta}\bigl(\frac{1}{\delta}+\abs{k}\bigr)^2\Bigr)
                 \abs*{\mathcal{B}_{i_0}-\widehat{\mathcal{B}}_{i_0}}\biggr)
                 \end{align*}
                 which gives, considering \eqref{Estimate-Each-Approximation-AhatA} 
                                                                          \begin{equation*}
                                                                          (C_{L_i^2}\mu^-)^{-1} \Bigl(\frac{1}{\delta^3}+\frac{1+2\abs{k}}{\delta^2}+\frac{\abs{k}+2\abs{k}^2}{\delta}\Bigr)
                                                                           \Bigl(\frac{\epsilon^4}{\delta^4}
                                                                                +(1+\abs{k})\Varepsilon_{k,\delta,m} \epsilon^4+\max(\abs{k}^2,1+\abs{k})\epsilon\Bigr)^2
                                                                                ~\epsilon^3,
                                                                           \end{equation*} which is \begin{equation}\label{First-Term-Error-Field-Approximation}
                                                                                                     O\bigl((C_{L_i^2}\mu^-)^{-1}\frac{\epsilon^7}{\delta^7}+\Varepsilon(\delta^6,\abs{k})\epsilon^7+
                                                                                                                    \Varepsilon(\delta^5,\abs{k}^2)\epsilon^7\bigr).
                                                                                                    \end{equation}
   The second term of \eqref{Field-Approximation-LinSys-Without-Error2} is exactly \begin{align*}
                                                                            \sum_{(i\neq i_0)\geq1}^m \biggl[&\bigl(\int_{[0,1]}\nabla_x\nabla_x\Phi_k(tx+(1-t)z_{i_0},z_i)~dt\cdotp(x-z_{i_0})\bigr)\times
                                                                                            \bigl(\mathcal{A}_{i_0}-\widehat{\mathcal{A}}_{i_0}\bigr)\\
                                                                                   &+ \bigl(\int_{[0,1]}\nabla_x\varPi_k(tx+(1-t)z_{i_0},z_i)~dt\cdotp(x-z_{i_0})\bigr)
                                                                                      \bigl(\mathcal{B}_i-\widehat{\mathcal{B}}_i\bigr)\biggr], 
                                                                                  \end{align*}
     which turns out to be not greater then \begin{align*}
                                            \sum_{(i\neq i_0)\geq1}^m \biggl[\frac{1}{\delta_{i_0,i}}\bigl(\frac{1}{\delta_{i_0,i}}+\abs{k}\bigr)^2\delta\abs*{\mathcal{A}_{i_0}-\widehat{\mathcal{A}}_{i_0}}
                                            + \Bigl(\frac{\abs{k}^2}{\delta_{i_0,i}}\bigl(\frac{1}{\delta_{i_0,i}}+\abs{k}\bigr)+
                                                                            \frac{1}{\delta_{i_0,i}}\bigl(\frac{1}{\delta_{i_0,i}}+\abs{k}\bigr)^3\Bigr)\delta
                                                                                      \abs{\mathcal{B}_i-\widehat{\mathcal{B}}_i}\biggr],
                                           \end{align*} and, due to \eqref{Estimate-Each-Approximation-AhatA}, not exceed 
                                           \begin{align*}
                                           \sum_{(i\neq i_0)\geq1}^m \biggl[\frac{1}{\delta_{i_0,i}}\bigl(\frac{1}{\delta_{i_0,i}}+\abs{k}\bigr)^2\delta
                                            + \Bigl(\frac{\abs{k}^2}{\delta_{i_0,i}}\bigl(\frac{1}{\delta_{i_0,i}}+\abs{k}\bigr)+
                                                                            \frac{1}{\delta_{i_0,i}}\bigl(\frac{1}{\delta_{i_0,i}}+\abs{k}\bigr)^3\Bigr)\delta\biggr]\\
                                           \times (C_{L_i^2}\mu^-)^{-1} \Bigl(\frac{\epsilon^4}{\delta^4}
                                                                                +(1+\abs{k})\Varepsilon_{k,\delta,m} \epsilon^4+\max(\abs{k}^2,1+\abs{k})\epsilon\Bigr)^2
                                                                                ~2\epsilon^3,
                                           \end{align*} 
     hence summing as in \eqref{Exhaustiv-counting} gives \begin{align*}
                                                           O\Bigl( \frac{1}{\delta^4}+\frac{(1+\abs{k})\ln(m^{1/3})}{\delta^3}+
                                                           \frac{(\abs{k}+\abs{k}^2)m^{1/3}}{\delta^2}+\frac{(\abs{k}^3+\abs{k}^2)m^{2/3}}{\delta}\Bigr)\delta\\
                                                           \times (C_{L_i^2}\mu^-)^{-1} \Bigl(\frac{\epsilon^4}{\delta^4}
                                                                                +(1+\abs{k})\Varepsilon_{k,\delta,m} \epsilon^4+\max(\abs{k}^2,1+\abs{k})\epsilon\Bigr)^2
                                                                                ~2\epsilon^3,
                                                          \end{align*} which is, for $m=O(1/\delta^3),$ the analogue of \eqref{First-Term-Error-Field-Approximation} with the following additional term 
                                                          \begin{align}\label{Additional-Term-Error-Field-Approximation}
                                                            \Varepsilon(\delta^6,\abs{k}^2+\abs{k}^3)\epsilon^7+\Varepsilon(\delta^5,\abs{k}^2)\epsilon^7.
                                                          \end{align}  
The third term of (\ref{Field-Approximation-LinSys-Without-Error2}), due to \eqref{Tensor-Inequalities} and \eqref{Estimate-Each-Approximation-AhatA}, is bounded by
\begin{align}
\frac{(C_{L_i^2}\mu^-)^{-1}}{\mu^+} \Bigl(\frac{\epsilon^4}{\delta^4}+(1+\abs{k})\Varepsilon_{k,\delta,m} \epsilon^4+\max(\abs{k}^2,1+\abs{k})\epsilon\Bigr)^2
=\frac{(C_{L_i^2}\mu^-)^{-1}}{\mu^+}\times O^\Varepsilon(\frac{\epsilon^4}{\delta^4}),
\end{align} compiling this last error  with \eqref{First-Term-Error-Field-Approximation} and the additional term \eqref{Additional-Term-Error-Field-Approximation} gives us the result.
\end{proof}


%

\end{document}